\newtheorem{thm}{Theorem}
\newtheorem{cor}[thm]{Corollary}
\newtheorem{lem}[thm]{Lemma}
\newtheorem{prop}[thm]{Proposition}
\theoremstyle{definition}
\newtheorem{defn}{Definition}
\newtheorem{prob}[thm]{Problem}
\newtheorem{rem}[thm]{Remark}
\newcommand{\sss}[1]{{\scriptscriptstyle #1}}
\newcommand{\um}{\mathcal U}
\newcommand{\pp}{\mathscr P}
\newcommand{\aaa}{\mathscr A}
\newcommand{\dd}{\mathscr D}
\newcommand{\ccc}{\mathscr C}
\newcommand{\uu}{\mathscr U}
\newcommand{\nn}{\mathscr N}
\newcommand{\ee}{\mathscr E}
\newcommand{\ff}{\mathscr F}
\newcommand{\kk}{\mathscr K}
\newcommand{\N}{\mathds N}
\newcommand{\R}{\mathds R}
\newcommand{\C}{\mathds C}
\newcommand{\Q}{\mathds Q}
\mathchardef\mhyphen="2D
\newcommand{\st}{\text{s.t.}}
\newcommand\restr[2]{{
  \left.\kern-\nulldelimiterspace 
  #1 
  \vphantom{\big|} 
  \right|_{#2} 
  }}
\newcommand{\mysetminus}{\mathbin{\fgebackslash}}
\DeclareMathOperator{\spn}{span}
\DeclareMathOperator{\range}{range\, }
\DeclareMathOperator{\diam}{diam}
\begin{document}

\title{On the commutant of $B(H)$ in its ultrapower}

\author{Emmanuel Chetcuti}
\address{
Emmanuel Chetcuti,
Department of Mathematics\\
Faculty of Science\\
University of Malta\\
Msida MSD 2080  Malta} \email {emanuel.chetcuti@um.edu.mt}

\author{Beatriz Zamora-Aviles}
\address{
Beatriz Zamora-Aviles,
Department of Mathematics \\
Faculty of Science \\
University of Malta \\
Msida MSD 2080, Malta} \email {beatriz.zamora@um.edu.mt}

\date{\today}
\begin{abstract}
Let $B(H)$ be the algebra of bounded linear operators on a separable infinite-dimesional Hilbert space $H$. We  study the commutant of $B(H)$ in its ultrapower.  We characterize the class of non-principal ultrafilters for which this commutant is non-trivial.  Additionally,  we extend the class of ultrafilters for which the commutant is trivial.
\end{abstract}
\subjclass[2000]{46L05, 03E50}
\maketitle

\section{Introduction}

Let $\alpha$ be a non-principal ultrafilter on $\N$ and let $A$ be a unital $C^*$-algebra. We define,
\begin{align*}
  \aaa_\infty & :=\left\{(x_n)\in\prod_n A:\sup_n\Vert x_n\Vert<\infty\right\}
\end{align*}
and
\begin{align*}
 \nn_\alpha:=\biggl\{(x_n)\in\aaa_\infty:\lim_\alpha\Vert x_n\Vert=0\biggr\}.
\end{align*}

The quotient of $\aaa_\infty$ by  $\nn_\alpha$ is a C*-algebra and it is called the \emph{ultrapower of $A$ w.r.t. $\alpha$}. We denote this C*- algebra by $\ccc_\alpha(A)$.  Note that if $\kk\subset\ccc_\alpha(A)$ is self-adjoint, then
\[\ff_\alpha(\kk):= \{\mathbf b\in\ccc_\alpha(A):\mathbf b\mathbf a-\mathbf a\mathbf b=\mathbf 0\ \forall \mathbf a\in\kk\}\]
is a C*-subalgebra of $\ccc_\alpha(A)$ and it is called the commutant of $\kk$ in $\ccc_\alpha(A)$.  It is easily seen that the algebra $A$ embeds isometrically in $\ccc_\alpha(A)$ which implies that $A$ and $\ff_\alpha(A)$ can be identified as C*-subalgebras of $\ccc_\alpha(A)$.  The invariant  $\ff_\alpha(A)$  is important for the classification of separable nuclear unital purely infinite simple $C^*$-algebras,  some of these applications can be found in \cite{Iz2004}, \cite{Ki}, \cite{KiPh2000} and \cite{Ph2000}.  If $A$ is separable and the Continuum Hypothesis is assumed then the isomorphism class of $\ff_\alpha(A)$ is independent of the choice of $\alpha$, see \cite{GeHa2001}. In the absence of the Continuum Hypothesis (CH), $\ff_\alpha(A)$ depends on the choice of $\alpha$ for every infinite-dimensional separable C*-algebra $A$,  see \cite{Fa2009} and \cite{FaHaSh2013}.

 Let $B(H)$ be the algebra of bounded operators on a separable infinite dimensional Hilbert space and let $K(H)$ be the ideal of compact operators on $B(H)$. For the Calkin algebra  $C(H)=B(H)/K(H)$ (which is non-separable)  Kirchberg proved in \cite{Ki2006} that $\ff_\alpha(C(H))=\C$ for any ultrafilter $\alpha$. This implies that  $\ff_\alpha(B(H))\subset \C + \ccc_\alpha(K(H))$.   Kirchberg asked whether  $\ff_\alpha(B(H))=\C$ for any ultrafilter $\alpha$.

It was surprising when I. Farah,  C. Philips and J. Steprans in \cite{FaPhSt2010} proved that under the Continuum Hypothesis, $\ff_\alpha(B(H))$ depends on the choice of the ultrafilter $\alpha$.  More specifically,  they proved that if $\alpha$ is a selective ultrafilter then $\ff_\alpha(B(H))=\C$.  They also constructed an ultrafilter in $ZFC$ for which $\ff_\alpha(B(H))\neq \C$  that they called \emph{flat}.   J. Steprans also proved that  $\ff_\alpha(B(H))= \C$ for any P-point ultrafilter. Recall that every selective ultrafilter is a P-point and neither the existence of selective nor P-point ultrafilters are provable from the axioms of ZFC;  Shelah showed that it is consistent that there are no P-point ultrafilters,  see \cite{Wi1982} .  However,  they exist assuming the Continuum Hypothesis,  \cite{Ma1977} and \cite{RuW1956}.

The authors of \cite{FaPhSt2010} posed the following two questions:
\begin{enumerate}[1.]
\item Is there an ultrafilter $\alpha$ on $\N$ in ZFC such that $\ff_\alpha(B(H))=\C$?
\item If an ultrafilter $\alpha$ satisfies $\ff_\alpha(B(H))\neq \C$, does if follow that $\alpha$ is flat?
\end{enumerate}
In this paper we give a positive answer to the second question.  In addition, we make other advancements with regard to the first question by introducing the class of \emph{quasi P-point ultrafilters} and show that for such ultrafilters the relative commutant of $B(H)$ is trivial.   In our analysis we also show that the relative commutant of $B(H)$ is  trivial for ultrafilters satisfying the 3f-property.  The following diagram summarizes these results.

\[\begin{array}{ccccccc}
  { } & { } & { } &  & \text{Quasi P-point} & { } & { } \\
  { } & { } & { } & \nearrow & { } & \searrow & { }\\
  \text{Selective} & \rightarrow & \text{P-point} & { } & { } & { } & \text{Non-flat}\leftrightarrow \ff_\alpha(B(H))=\C \\
    { } & { } & { } & \searrow & { } & \nearrow & { }\\
  { } & { } & { } &  & \text{3f-Property} & { } & { }
\end{array}\]

It is known that the existence of ultrafilters with the 3f-Property (and therefore of selective ultrafilters and P-points) cannot be proved in ZFC.

In order to prove  $\ff_\alpha(B(H))=\C$ for every selective ultrafilter $\alpha$, the authors in \cite{FaPhSt2010} used a  result by Sherman \cite{She2010} about central sequences in $B(H)$.  They also used a non-trivial characterization of selective ultrafilters by Mathias in \cite{Ma1977}.  It is worthwhile to mention that our methods are different in that they are totally elementary.

This paper is organized as follows: In section \ref{sec2},  we set up notation and introduce some preliminaries used in the rest of the paper.  In section \ref{sec3} we recall the definitions of the classes of ultrafilters we consider.  Of particular interest is the characterization of \emph{P-point} ultrafilters in terms of a quasi order on the Baire space $\N^\N$ given in Corollary \ref{cPpoint}.   This characterization of P-points leads to the definition of a possibly larger class of ultrafilters which we call \emph{quasi P-point} ultrafilters, in Definition \ref{qppoint} and Proposition \ref{quasi}.  In addition we consider ultrafilters with the \emph{three functions property} (3f-property for short).  This property for ultrafilters was studied in  \cite{BlaDoRa2013} and it was  later isolated by A. Blass in  \cite{Bla2015}.  It should be noted that our definition is weaker.  It is easily seen that every P-point has the 3f-property,  but there are ultrafilters that have the 3f-property which are not P-points \cite{BlaDoRa2013}.  Hence this class of ultrafilters properly contains the P-point ultrafilters.  Some  properties of ultrafilters with the 3f-property are proven.  In particular,  we prove that no flat ultrafilter can satisfy the three function property,  Theorem \ref{f3f}.  Section 5 explores $\ff_\alpha(B(H))$ w.r.t. the ultrafilters introduced in Section \ref{sec3}.  We prove the converse of \cite[Theorem 4.1]{FaPhSt2010}, namely that if  $\ff_\alpha(B(H))\neq \C$, then $\alpha$ is flat.  This result combined with Corollary \ref{f3f},  implies that $\ff_\alpha(B(H))= \C$ for any ultrafilter with the 3f-property.  Further we show in Theorem \ref{qp} that if $\alpha$ is a quasi P-point then $\ff_\alpha(B(H))= \C$.
In section \ref{sec4},  we emphasize how \cite[Theorem 1.5]{FaPhSt2010}  can be stated and proved in a more general setting.  This result could  be of independent interest.

\section{Some notation and preliminary comments}\label{sec2}

\begin{enumerate}[{\rm (1)}]
\item Let $\nn$ denote the Baire space $\N^\N$ and $\nn^\uparrow$ the set of all strictly increasing members of $\nn$.
\item For every $k\in\N$ let $[k,\rightarrow]$ denote the set $\{n\in\N:n\ge k\}$.   For any infinite $A\subset \N$ let $\mathcal F(A)$ denote the cofinite filter in $A$.  We shall simply write $\mathcal F$ instead of $\mathcal F(\N)$.
\item Let $f\in\nn$ and $\alpha$ a filter on $\N$.
\begin{enumerate}[{\rm (i)}]
\item If P is a property associated with functions (like injectivity, surjectivity, etc.) we say that $f$ is $\alpha$-P (or has property P mod $\alpha$) when there exists $A\in\alpha$ s.t. $f|_A$ has property P.
\item The family $\alpha^f:=\{A\subset \N:f^{-1}A\in \alpha\}$ is a filter on $\N$ containing $\{f[A]:A\in\alpha\}$.  If $\alpha$ is a maximal filter, then $\alpha^f$ is also a maximal filter.
\item Note that $\alpha^f$ is principal if and only if $f$ is $\alpha$-constant, and if $\alpha$ is principal, then so is $\alpha^f$.  When $f$ is surjective, (i.e. $f[f^{-1}A]=A$ for every $A\subset \N$) then one can show that $\alpha^f=\{f[A]:A\in\alpha\}$.
\end{enumerate}
\item We shall primarily be concerned with non-principal ultrafilters on $\N$. Let $\beta\omega\mysetminus \omega$ denote the set of all non-principal ultrafilters on $\N$.  Unless otherwise stated: `ultrafilter on $\N$' means `non-principal ultrafilter on $\N$'.
\item  Let $\alpha\in\beta\omega\mysetminus \omega$.
\begin{itemize}
\item `$A_n\downarrow \emptyset$ in $\alpha$' means that $(A_n)$ is a  decreasing sequence in $\alpha$ and  $\bigcap_nA_n=\emptyset$.
\item It is easy to see that if $f=g\mod \alpha$ then $\alpha^f=\alpha^g$.  We recall the fact that if $\alpha^f=\alpha$ then $\{n\in\N:f(n)=n\}\in\alpha$, \cite[Theorem 3.3]{Booth1970}.
\item If $\beta\in\beta\omega\mysetminus \omega$ and there exists a bijection  $f\in\nn$ satisfying $\beta=\alpha^f$, we write $\alpha\sim_{\sss{{\rm RK}}}\beta$.\footnote{Note that $\alpha\sim_{\sss{{\rm RK}}}\beta$ if and only if there exists $\alpha$-injective $f\in\nn$ satisfying $\beta=\alpha^f$.}  We recall that $\sim_{\sss{{\rm RK}}}$ is an equivalence relation on $\beta\omega\mysetminus \omega$.
\item The Rudin-Keisler ordering on $\beta\omega\mysetminus \omega$ is defined as follows:    $\beta\leqslant_{\sss{{\rm RK}}}\alpha$ if $\beta=\alpha^f$ for some non $\alpha$-constant $f\in\nn$.  We recall that the Rudin-Keisler ordering induces a partial order on $(\beta\omega\mysetminus \omega)/\sim_{\sss{{\rm RK}}}$, and furthermore,  every selective ultrafilter is an atom in this partially ordered set.
\item Let $(X_n)$ be a sequence of subsets of $\N$ satisfying $\bigcap_n X_n=\emptyset$.   Let $A_n:=\bigcap_{i=1}^n X_i$ and let $f\in\nn$ be the function defined by $f(i):=\min\{n\in\N:i\notin A_{n+1}\}$.  Note that $A_n\downarrow \emptyset$ in the powerset of $\N$. Note further that for $k\ge 2$, $f^{-1}\{k\}=A_k\mysetminus A_{k+1}$.  $f$ is uniquely determined; we call it the  `indicator function associated with $(X_n)$'.
\end{itemize}
\end{enumerate}

\section{On some different ultrafilters}\label{sec3}

\subsection{Selective ultrafilters and P-points}

We recall that for an ultrafilter $\alpha\in \beta\omega\mysetminus \omega$, the following conditions are equivalent:
\begin{enumerate}[{\rm(i)}]
  \item For every partition $\{A_n:n\in\N\}$ of $\N$ with $A_n\notin\alpha$, there is $X\in\alpha$ such that $|X\cap A_n|=1$ for each $n\in\N$,
  \item Every $g\in\nn$ is either $\alpha$-injective or $\alpha$-constant,
  \item For every $E\subset [\N]^2$ there exists $A\in\alpha$ such that either $[A]^2\subset E$ or $[A]^2\cap E =\emptyset$,
  \item For every analytic\footnote{Recall that a subset of a Polish space is analytic if it is a continuous image of a Borel subset of a Polish space.} $E\subset [\N]^\infty$ there is $A\in\alpha$ such that either $[A]^\infty\subset E$ or $[A]^\infty\cap E=\emptyset$.
\end{enumerate}
In this case $\alpha$ is said to be \emph{selective} (or \emph{Ramsey}).   The equivalence of {\rm(i)-(iii)}, and the implication {\rm(iv)}$\Rightarrow$ {\rm(i)} are not difficult to see (see, for example, \cite[Theorem 1.3]{FaPhSt2010}) but the implication {\rm(i)}$\Rightarrow${\rm(iv)} is the Mathias's Theorem \cite{Ma1977}.  It is well known that selective ultrafilters  exist under $CH$ \cite{RuW1956}.

The ultrafilter $\alpha\in\beta\omega\mysetminus \omega$ is called a \emph{P-point} (or \emph{weakly selective}) if for every partition $\{A_n:n\in\N\}$ of $\N$ with $A_n\notin\alpha$, there exists $X\in\alpha$ such that $X\cap A_n$ is finite for each $n\in\N$.  Clearly every selective ultrafilter is a P-point.  However, the converse is not true.  It was proven by Mathias and others that the Continuum Hypothesis ensures the existence of  P-point ultrafilters that are non selective, (K. Kunen in \cite{Ku1976} proved that under MA there exist P-points that are not selective). On the other hand Shelah in \cite{Sh1982} constructed a model of ZFC in which there exists (up to isomorphism) exactly one P-point  and this P-point has to be selective.  Shelah also showed that it is consistent that there are no P-point ultrafilters \cite{Wi1982}. Therefore the existence of selective and P-point ultrafilters cannot be proved in ZFC.

For a non $\alpha$-constant function $f\in\nn$, we say that \emph{$f$ is $\alpha$-faithful} when there exists $K\in\alpha$ such that the condition $f[A\cap K]\in\alpha^f$ implies that $A\cap K\in\alpha$.  For $f,g\in\nn$ let us write $g\ll f$ if $g\le h f$ for some $h\in\nn^\uparrow$.

\begin{prop}\label{faith} Let $f\in\nn$ and $\alpha\in\beta\omega\mysetminus \omega$.  The following statements are equivalent:
\begin{enumerate}[{\rm(i)}]
  \item $f$ is $\alpha$-faithful.
  \item $\alpha\sim_{\sss{{\rm RK}}} \alpha^f$.
  \item There exists a bijection $g\in\nn$ such that $\alpha^{g^{-1}\circ f}=\alpha$.
  \item $f$ is $\alpha$-injective.
\end{enumerate}
\end{prop}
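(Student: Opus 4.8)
The plan is to make (i)$\,\Leftrightarrow\,$(iv) the substantive core and then close the loop through the Rudin--Keisler statements via the cycle (iv)$\,\Rightarrow\,$(iii)$\,\Rightarrow\,$(ii)$\,\Rightarrow\,$(iv). First I would dispose of the trivial case: if $f$ is $\alpha$-constant then $\alpha^f$ is principal, so none of (ii), (iii), (iv) can hold (an $\alpha$-injective restriction and an $\alpha$-constant restriction of $f$ would coincide on an infinite member of $\alpha$), while (i) is only defined for non-$\alpha$-constant $f$; so assume from now on that $f$ is not $\alpha$-constant.

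For (iv)$\,\Rightarrow\,$(i): if $f|_K$ is injective with $K\in\alpha$, I claim the same $K$ witnesses $\alpha$-faithfulness. If $f[A\cap K]\in\alpha^f$ then $f^{-1}[f[A\cap K]]\in\alpha$ by definition of $\alpha^f$; injectivity of $f$ on $K$ gives $f^{-1}[f[A\cap K]]\cap K=A\cap K$, so $A\cap K$ lies in $\alpha$ as an intersection of two members.

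The key step, and the one I expect to be the main obstacle, is (i)$\,\Rightarrow\,$(iv). Fix $K\in\alpha$ witnessing $\alpha$-faithfulness and choose a transversal $T\subseteq K$ of the fibres of $f$ on $K$, i.e. a set meeting each nonempty $f^{-1}(y)\cap K$ in exactly one point. Then $f|_T$ is injective and $f[T]=f[K]$. Since $f^{-1}[f[K]]\supseteq K\in\alpha$ we have $f[K]\in\alpha^f$, hence $f[T\cap K]=f[T]=f[K]\in\alpha^f$, and $\alpha$-faithfulness forces $T=T\cap K\in\alpha$. As $f|_T$ is injective, $f$ is $\alpha$-injective. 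The point is that faithfulness, applied to the transversal, converts the automatic fact ``$f[K]\in\alpha^f$'' into ``$T\in\alpha$''; this is the one place where the full strength of the definition is used.

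It remains to handle the Rudin--Keisler conditions. Given (iv), with $f|_A$ injective and $A\in\alpha$, first shrink $A$ to $A'\in\alpha$ with $\N\setminus A'$ and $\N\setminus f[A']$ both infinite: split $A=A'\sqcup A''$ with $A'\in\alpha$ and $A''\notin\alpha$ infinite, and note that $f[A'']$ is infinite and disjoint from $f[A']$ by injectivity. Extend the bijection $f|_{A'}\colon A'\to f[A']$ to a bijection $g\colon\N\to\N$ using any bijection $\N\setminus A'\to\N\setminus f[A']$; since $g^{-1}\circ f$ is the identity on $A'\in\alpha$, one gets $\alpha^{g^{-1}\circ f}=\alpha$, which is (iii). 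For (iii)$\,\Rightarrow\,$(ii), apply the operation $(\,\cdot\,)^g$ and use the identity $(\gamma^{\varphi})^{\psi}=\gamma^{\psi\circ\varphi}$ to obtain $\alpha^g=(\alpha^{g^{-1}\circ f})^g=\alpha^f$ with $g$ a bijection, i.e. $\alpha\sim_{\mathrm{RK}}\alpha^f$. For (ii)$\,\Rightarrow\,$(iv), from $\alpha^f=\alpha^g$ with $g$ a bijection the same identity gives $\alpha^{g^{-1}\circ f}=(\alpha^f)^{g^{-1}}=(\alpha^g)^{g^{-1}}=\alpha$; then the quoted theorem of Booth~\cite[Theorem 3.3]{Booth1970} yields $\{n\in\N:g^{-1}(f(n))=n\}\in\alpha$, so $f$ agrees with the bijection $g$ on a member of $\alpha$ and is therefore $\alpha$-injective. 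This closes the cycle.
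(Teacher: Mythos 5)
Your proof is correct and uses essentially the same ingredients as the paper's: the transversal-plus-faithfulness argument for extracting $\alpha$-injectivity from (i), the extension of an injective-mod-$\alpha$ map to a genuine bijection for the Rudin--Keisler clauses, and Booth's fixed-point theorem to recover $\alpha$-injectivity from $\alpha^{g^{-1}\circ f}=\alpha$. The only difference is the ordering of the implication cycle (the paper runs (i)$\Rightarrow$(ii)$\Rightarrow$(iii)$\Rightarrow$(iv)$\Rightarrow$(i)), plus your slightly more explicit handling of the degenerate $\alpha$-constant case and of why the bijective extension exists, neither of which changes the substance.
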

\begin{proof}
  {\rm(i)}$\Rightarrow${\rm(ii)}.  Let $K\in\alpha$ such that $f[A\cap K]\in\alpha^f$ implies $A\cap K\in\alpha$ for every $A\subset \N$.  Let $A\subset K$ such that $f[A]=f[K]$ and $f|_A$ is injective.  Faithfulness implies that
    $A\in\alpha$, and therefore $\alpha\sim_{\sss{{\rm RK}}} \alpha^f$.

  {\rm(ii)}$\Rightarrow${\rm(iii)}.   By definition, $\alpha\sim_{\sss{{\rm RK}}} \alpha^f$ implies that there exists $\alpha$-injective $g'\in\nn$ such that $\alpha^{g'}=\alpha^f$.  It is easy to see, then, that there exists a bijection $g\in\nn$ satisfying $g=g'\mod\alpha$ and $\alpha^g=\alpha^f$ and so, for an arbitrary $A\subset \N$, we have
  \[A\in\alpha^{g^{-1}\circ f}\ \Leftrightarrow\ f^{-1}( g (A))\in\alpha\ \Leftrightarrow\ g(A)\in \alpha^g\ \Leftrightarrow\ A\in \alpha.\]

  {\rm(iii)}$\Rightarrow${\rm(iv)}.   If there exists a bijection $g\in\nn$ such that $\alpha^{g^{-1}\circ f}=\alpha$, then (by the `fixed point theorem') $g^{-1}\circ f$ is $\alpha$-injective.  Therefore $f$ is $\alpha$-injective.

{\rm(iv)}$\Rightarrow${\rm(i)}.  If $K\in\alpha$, $\restr{f}{K}$ is injective, and $f[A\cap K]\in\alpha^f$ for some $A\subset \N$, then, $A\cap K=f^{-1}(f[A\cap K])\cap K\in\alpha$.  This shows that if $f$ is  $\alpha$-injective, then it is $\alpha$-faithful.
\end{proof}

\begin{lem}\label{l1}  For $f,g\in\nn$, we have $g\ll f$ if and only if there exists $s\in\nn^\uparrow$ such that
\[g^{-1}[s(n),\rightarrow]\subset f^{-1}[n,\rightarrow].\]
\end{lem}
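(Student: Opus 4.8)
The plan is to collapse both sides of the equivalence to a single pointwise inequality. Recall that $g\ll f$ means $g(i)\le h(f(i))$ for all $i$ and some $h\in\nn^\uparrow$, while the right-hand condition asks for $s\in\nn^\uparrow$ such that, for every $n$ and every $i$, the inequality $g(i)\ge s(n)$ forces $f(i)\ge n$ (this is just the inclusion $g^{-1}[s(n),\rightarrow]\subset f^{-1}[n,\rightarrow]$ written out).

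First I would treat the forward implication. Given $h\in\nn^\uparrow$ with $g\le hf$, I would set $s(n):=h(n)+1$, which is again strictly increasing (and integer-valued) because $h$ is. Fixing $n$ and $i$ with $g(i)\ge s(n)$, one gets $h(f(i))\ge g(i)\ge h(n)+1>h(n)$, and since $h$ is (strictly) increasing this forces $f(i)>n$, hence $f(i)\ge n$; so $g^{-1}[s(n),\rightarrow]\subset f^{-1}[n,\rightarrow]$ for every $n$.

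For the converse I would start from such an $s\in\nn^\uparrow$ and put $h(m):=s(m+1)$, which is strictly increasing because $s$ is. The decisive step is to test the inclusion at the index $n:=f(i)+1$ for a fixed but arbitrary $i$: if $g(i)\ge s(f(i)+1)$ held, the inclusion would yield $f(i)\ge f(i)+1$, a contradiction, so $g(i)<s(f(i)+1)=h(f(i))$; as $i$ was arbitrary this gives $g\le hf$, i.e. $g\ll f$. The whole proof is just unwinding the definitions; the only genuinely substantive point is observing that the entire family of inclusions $g^{-1}[s(n),\rightarrow]\subset f^{-1}[n,\rightarrow]$, $n\in\N$, is equivalent to the single pointwise bound $g(i)<s(f(i)+1)$ for all $i$ — i.e. picking the test index $n=f(i)+1$ — while everything else is careful bookkeeping of strict versus non-strict inequalities so that the auxiliary functions $s$ and $h$ really land in $\nn^\uparrow$.
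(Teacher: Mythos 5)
Your proof is correct and follows essentially the same route as the paper's: the forward direction takes $s$ to be (a shift of) $h$, and the converse defines $h(m):=s(m+1)$ and tests the inclusion at $n=f(i)+1$, exactly as in the paper. The only difference is the cosmetic choice $s(n):=h(n)+1$ where the paper uses $s=h$ directly; both are fine.
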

\begin{proof}
  Suppose that $g\le h\circ f$ for some $h\in\nn^\uparrow$.  Then $g(i)\ge h(n)$ implies $h(f(i))\ge h(n)$ and therefore $f(i)\ge n$, i.e. $g^{-1}[h(n),\rightarrow]\subset f^{-1}[n,\rightarrow]$.  Conversely, suppose that there exists $s\in \nn^\uparrow$ such that $g^{-1}[s(n),\rightarrow]\subset f^{-1}[n,\rightarrow]$.  The function $h\in\nn$ defined by $h(i):=s(i+1)$ is strictly increasing. If $g(i)\ge h(f(i))$ for some $i\in\N$, then   $g(i)\ge s(f(i)+1)$, and therefore $f(i)\ge f(i)+1$.  So $g<h\circ f$.  This proves the lemma.
  \end{proof}

  If we set $f\sim g$ when there exists a bijective $h\in\nn$ satisfying $f=h g$, then $\sim$ defines an equivalence relation on $\nn$ and $\ll$ induces a partial order on $\nn/\sim$.

\begin{prop}\label{finitetoone} Let $\alpha\in\beta\omega\mysetminus \omega$ and $f\in\nn$.  The following statements are equivalent:
  \begin{enumerate}[{\rm(i)}]
    \item $f$ is $\alpha$-finite-to-one.
    \item $g\ll f$ for some $\alpha$-injective $g\in\nn$.
    \item There exists $\alpha$-injective $g\in\nn$ and $h\in\nn$ such that $g\le h\circ f$.
    \item There is  $g\in\nn$ that is $\alpha$-injective and $h\in\nn^\uparrow$ such that  $g^{-1}[h(n),\rightarrow]\subset f^{-1}[n,\rightarrow]$ for every $n\in\N$.
  \end{enumerate}
\end{prop}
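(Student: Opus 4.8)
The plan is to prove the four statements equivalent by running the cycle $(i)\Rightarrow(iv)\Rightarrow(ii)\Rightarrow(iii)\Rightarrow(i)$, noting in advance that only $(i)\Rightarrow(iv)$ carries any real content; the other three links are formal consequences of Lemma \ref{l1} and of the definition of $\ll$.

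I would dispatch the easy links first. For $(iv)\Rightarrow(ii)$ I would simply invoke Lemma \ref{l1}, which says that for a fixed pair of functions $g\ll f$ holds if and only if there is $s\in\nn^\uparrow$ with $g^{-1}[s(n),\rightarrow]\subset f^{-1}[n,\rightarrow]$ for every $n$; hence the $g$ witnessing $(iv)$ also witnesses $(ii)$. For $(ii)\Rightarrow(iii)$ I would merely unwind the definition of $\ll$, using $\nn^\uparrow\subset\nn$. For $(iii)\Rightarrow(i)$ I would argue: if $g$ is injective on some $K\in\alpha$ and $g\le h\circ f$ with $h\in\nn$, then for each $k$ the set $f^{-1}(\{k\})\cap K$ is mapped by $g$ into the finite set $\{1,\dots,h(k)\}$, and injectivity of $g$ on $K$ forces $f^{-1}(\{k\})\cap K$ to be finite; so $\restr{f}{K}$ is finite-to-one and $f$ is $\alpha$-finite-to-one.

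The heart of the argument, and the step I expect to need the most care, is $(i)\Rightarrow(iv)$. Starting from a set $A\in\alpha$ on which $f$ is finite-to-one, I want to produce an $\alpha$-injective $g$ whose value at $i$ is bounded by a quantity depending on $f(i)$ alone. The naive idea — enumerate $A$ increasingly and take $g$ to be that enumeration — fails, because an element of $A$ lying in a low fibre of $f$ can still have a large position. The remedy I would use is to re-order $A$ by the lexicographic-type order $i\prec j\iff f(i)<f(j)$, or $f(i)=f(j)$ and $i<j$; since the fibres of $f$ on $A$ are finite and $A$ is infinite, this order has type $\omega$, so the order isomorphism $g_0\colon(A,\prec)\to(\N,<)$ is a well-defined injection with $g_0(i)\le\beta(f(i))$, where $\beta(k):=|\{i\in A:f(i)\le k\}|<\infty$ is non-decreasing in $k$. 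Extending $g_0$ to $g\in\nn$ in a harmless way off $A$ (setting $g\equiv 1$ there will do) and setting $h(n):=\beta(n)+n\in\nn^\uparrow$, a routine check shows that $f(i)<n$ forces $g(i)<h(n)$, i.e. $g^{-1}[h(n),\rightarrow]\subset f^{-1}[n,\rightarrow]$ for every $n$; this is $(iv)$.

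The only points that genuinely need attention are internal to $(i)\Rightarrow(iv)$: one must verify that $\prec$ really has order type $\omega$ — this is exactly where finiteness of the fibres of $\restr{f}{A}$ is used, together with the fact that a set in a non-principal ultrafilter is infinite — and one must do the small amount of bookkeeping that makes the resulting $h$ strictly increasing and handles the finitely many values off $A$. Everything else is bookkeeping or a direct appeal to Lemma \ref{l1}.
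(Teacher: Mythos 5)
Your proposal is correct and follows essentially the same route as the paper: the hard direction is the same explicit construction (enumerate $A$ fibre-by-fibre of $f$ in increasing order, which is exactly your order isomorphism for $\prec$, and bound the position by $\beta(n)+n$, which is literally the paper's $h(i)=\sum_{j\le i}n_j+i$), and the remaining implications are handled the same way via Lemma \ref{l1} and the finite-image argument for (iii)$\Rightarrow$(i). The only cosmetic difference is that you target (iv) directly where the paper targets (ii) and invokes Lemma \ref{l1} for the (ii)$\Leftrightarrow$(iv) link.
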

\begin{proof}
The equivalence of {\rm(ii)} and {\rm(iv)} follows by Lemma \ref{l1}.   The implication {\rm(ii)} $\Rightarrow$ {\rm(iii)} is trivial.  Let us show {\rm(i)} $\Rightarrow$ {\rm(ii)}.  Let $X\in\alpha$ such that $f$ is finite-to-one on $X$.  For every $i\in\N$ let $X_i:=f^{-1}\{i\}\cap X$ and let $n_i:=|X_i|$.  Express the elements of $X_i$ in ascending order, i.e. $X_i=\{k^i_1,\dots,k^i_{n_i}\}$ and $k^i_s<k^i_t$ when $s<t$.  Define $h,g\in\nn$ by setting
\[h(i):=\sum_{j=1}^{i}n_j\,+\, i\qquad(i\in\N)\]
and
\[g(k):=\begin{cases}
              \sum_{j=1}^{i-1}n_j\,+\,i\,+\,p & \mbox{if there exist } i\in\N,\, 1\le p\le n^i \mbox{ such that } k=k^i_p \\
              h(f(k)), & \mbox{otherwise}.
            \end{cases}\]
            It is easy to verify that $h\in\nn^\uparrow$, $g$ is $\alpha$-injective and $g\le h f$.

Remains to be shown that {\rm(iii)} $\Rightarrow$ {\rm(i)}; if $g\in\nn$ is $\alpha$-injective and $h\in\nn$ satisfies $g\le h\circ f$, then, there exists $X\in\alpha$ such that $\restr{g}{X}$ is injective and, therefore, $\restr{f}{X}$ is finite-to-one.
\end{proof}

The following characterisation of P-points will be useful.

\begin{cor}\label{cPpoint}
The ultrafilter $\alpha\in\beta\omega\mysetminus \omega$ is a P-point if and only if for every non $\alpha$-constant $f\in\nn$ there exists $g\in\nn$ that is $\alpha$-faithful such that $g\ll f$.
\end{cor}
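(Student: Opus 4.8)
The plan is to reduce the statement to a standard dictionary -- most of which is already assembled in Propositions \ref{faith} and \ref{finitetoone} -- relating P-points, functions that are finite-to-one mod $\alpha$, injectivity mod $\alpha$, and faithfulness mod $\alpha$. The one extra ingredient needed is the well-known characterisation: \emph{$\alpha$ is a P-point if and only if every $f\in\nn$ is either $\alpha$-constant or $\alpha$-finite-to-one}. Once this is in hand the corollary is immediate. Indeed, for a non $\alpha$-constant $f$, Proposition \ref{finitetoone} (equivalence of (i) and (ii)) says that $f$ is $\alpha$-finite-to-one exactly when $g\ll f$ for some $\alpha$-injective $g\in\nn$, while Proposition \ref{faith} (equivalence of (i) and (iv)) says that a function is $\alpha$-faithful precisely when it is $\alpha$-injective; moreover an $\alpha$-injective function is automatically non $\alpha$-constant, so the term ``$\alpha$-faithful'' applies to it. Chaining these equivalences, ``$\alpha$ is a P-point'' becomes ``every non $\alpha$-constant $f\in\nn$ admits an $\alpha$-faithful $g\in\nn$ with $g\ll f$'', which is exactly the assertion of the corollary.

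It therefore remains to prove the characterisation of P-points. For the forward implication I would assume $\alpha$ is a P-point and take a non $\alpha$-constant $f\in\nn$. Its nonempty fibres $f^{-1}\{n\}$ partition $\N$; none of them lies in $\alpha$ (otherwise $f$ would be $\alpha$-constant), and there must be infinitely many of them, since a partition of $\N$ into finitely many pieces always has a piece in the ultrafilter $\alpha$. Re-indexing the nonempty fibres as a partition $\{A_n:n\in\N\}$ with $A_n\notin\alpha$ and invoking the P-point property yields $X\in\alpha$ with $X\cap A_n$ finite for all $n$, i.e.\ $\restr{f}{X}$ is finite-to-one, so $f$ is $\alpha$-finite-to-one. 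For the converse I would start from a partition $\{A_n:n\in\N\}$ of $\N$ with each $A_n\notin\alpha$, define $f\in\nn$ by $f(k):=n$ for $k\in A_n$, observe that $f^{-1}\{n\}=A_n\notin\alpha$ so $f$ is not $\alpha$-constant, hence $\alpha$-finite-to-one by hypothesis; any $X\in\alpha$ on which $f$ is finite-to-one then satisfies $X\cap A_n=X\cap f^{-1}\{n\}$ finite for each $n$, which is the P-point conclusion.

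There is no serious obstacle here: the only thing needing a moment's care is the bookkeeping in the forward direction, where the fibres of $f$ must be stripped of empty sets and reindexed by $\N$ to match the indexing in the definition of a P-point, together with the remark that a non $\alpha$-constant function cannot have only finitely many fibres. Everything else is a direct citation of the two preceding propositions.
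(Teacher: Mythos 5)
Your argument is correct and is exactly the route the paper intends: the corollary is stated without proof precisely because it is meant to follow from Proposition \ref{faith} (faithful $\Leftrightarrow$ $\alpha$-injective) and Proposition \ref{finitetoone} ($\alpha$-finite-to-one $\Leftrightarrow$ $g\ll f$ for some $\alpha$-injective $g$) together with the standard fact that $\alpha$ is a P-point iff every $f\in\nn$ is $\alpha$-constant or $\alpha$-finite-to-one. You supply that standard fact with the correct fibre bookkeeping (including the observation that a non $\alpha$-constant function must have infinitely many nonempty fibres, and that an $\alpha$-injective function is automatically non $\alpha$-constant), so nothing is missing.
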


\subsection{Ultrafilters on a countable metric space.}

Let $X=\{x(n):n\in\N\}$ be a countable metric space. We suppose that all the $x(n)$'s are distinct, so that we need not distinguish between $X$ and $\N$.   A subset $A\subset \N$ is said to be \emph{$\delta$-discrete}, where $\delta>0$, if the distance between any two points of $A$ is at least $\delta$.  The ultrafilter $\alpha$ on $\N\ (=X)$ is \emph{discrete} if there exists $\delta>0$ and $A\in\alpha$ such that $A$ is $\delta$-discrete.  On the other extreme, $\alpha$ is said to be \emph{Cauchy} if for every $\varepsilon>0$ there exists $A\in\alpha$ satisfying $\diam A\le \varepsilon$.

\begin{prop}\label{P3}
Let $\alpha$ be an ultrafilter on an infinite countable metric space $X$.
\begin{enumerate}[{\rm(i)}]
\item If $\alpha$ is not Cauchy, there exists $\delta>0$ such that every $A\in\alpha$ contains an infinite $\delta$-discrete subset.
\item If $\alpha$ is selective, then $\alpha$ is either Cauchy or discrete.
\end{enumerate}
\end{prop}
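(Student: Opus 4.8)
The plan is to prove (i) by a direct greedy selection, and then obtain (ii) from (i) together with the Ramsey-type characterization of selective ultrafilters recalled above (condition (iii)).

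\smallskip

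\noindent\textbf{Part (i).} Assume $\alpha$ is not Cauchy. By definition there is $\varepsilon>0$ such that $\diam A>\varepsilon$ for every $A\in\alpha$. I would set $\delta:=\varepsilon/2$ and note that the open ball $B_k:=\{y\in X:d(x(k),y)<\delta\}$ satisfies $\diam B_k\le\varepsilon$, hence $B_k\notin\alpha$, so $\N\mysetminus B_k\in\alpha$ for every $k$. Now fix $A\in\alpha$ and construct a sequence $(a_n)$ in $A$ recursively: having chosen $a_1,\dots,a_{n-1}$, observe that $A$ with the finitely many balls $B_{a_1},\dots,B_{a_{n-1}}$ removed is a finite intersection of members of $\alpha$, hence lies in $\alpha$ and in particular is nonempty (indeed infinite, since $\alpha$ is non-principal); pick $a_n$ in it. Then $d(a_i,a_j)\ge\delta$ whenever $i\ne j$, so $\{a_n:n\in\N\}\subset A$ is an infinite $\delta$-discrete set, and $\delta$ depends only on $\varepsilon$, not on $A$.

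\smallskip

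\noindent\textbf{Part (ii).} Suppose $\alpha$ is selective and not Cauchy, and let $\delta>0$ be the constant furnished by part (i). Identifying $X$ with $\N$, set
\[
E:=\bigl\{\{i,j\}\in[\N]^2:d(x(i),x(j))\ge\delta\bigr\}.
\]
By the equivalence (iii) in the definition of selectivity there is $A\in\alpha$ with $[A]^2\subset E$ or $[A]^2\cap E=\emptyset$. The second alternative is impossible: by part (i) the set $A$ contains an infinite $\delta$-discrete subset, hence a pair of distinct points at distance $\ge\delta$, i.e. a pair belonging to $E$. Therefore $[A]^2\subset E$, which says exactly that $A$ is $\delta$-discrete; since $A\in\alpha$, the ultrafilter $\alpha$ is discrete.

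\smallskip

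I do not expect a serious obstacle. The only points needing care are keeping the constant $\delta$ uniform over all $A\in\alpha$ in part (i) (the argument does this, since $\delta$ is extracted from the single witness $\varepsilon$ to non-Cauchyness) and checking that the recursively defined ``leftover'' sets stay in $\alpha$ at every finite stage — this uses that each ball $B_k$ fails to be in $\alpha$, together with non-principality so that the construction never stalls. Selectivity enters only through the partition property already recorded in the text, so once (i) is in hand, (ii) is essentially immediate.
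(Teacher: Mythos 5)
Your proof is correct, but it takes a genuinely different route from the paper's. For part (i) the paper does not use a greedy ball-removal: it builds the sets $X_k$ of points at distance more than $\delta$ from the first $k$ points of an enumeration, takes the associated indicator function, recursively produces a block decomposition of $\N$ into consecutive intervals $[n_{2i-1},n_{2i}]$ so that points lying in sufficiently separated blocks are $\delta$-apart, and then argues that every $A\in\alpha$ must meet infinitely many blocks. Your argument --- remove the open $\delta$-balls around already-chosen points, note each ball has diameter at most $\varepsilon$ and hence is not in $\alpha$, and keep picking from the leftover set, which stays in $\alpha$ --- is shorter and avoids the indicator function and the $O$/$E$ dichotomy entirely; the uniformity of $\delta$ over all $A\in\alpha$ is handled correctly since $\delta$ comes from the single witness to non-Cauchyness. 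For part (ii) the two proofs diverge again: the paper reuses its interval structure and applies the partition form of selectivity (one point per block) to get a $\delta$-discrete set directly, whereas you apply the Ramsey form (condition (iii) of the equivalences recalled in Section 3) to the colouring $E=\{\{i,j\}: d(x(i),x(j))\ge\delta\}$ and use part (i) to exclude the homogeneous-small alternative. Both forms of selectivity are recorded as equivalent in the paper, so this is legitimate, and since the later applications (e.g.\ Theorem \ref{qp}) only invoke the statement of (i) rather than the internal interval construction, nothing downstream is lost by your simpler proof. The main thing the paper's construction buys is that a single combinatorial object (the block decomposition) serves both parts at once.
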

\begin{proof}
If $\alpha$ is not Cauchy there exists $\delta>0$ such that $\diam A>2\delta$ for every $A\in\alpha$.  Let $x:\N\to X$ be an enumeration of $X$ and let $\rho$ denote the metric function.  For every $k\in\N$ define
  \[X_k:=\bigl\{n\in\N:\rho\bigl(x(n),x(i)\bigr)>\delta\quad\forall i\le k\bigr\}.\]
 It can readily be seen that $X_k\in\alpha$ for every $k\in\N$, and $\bigcap_k X_k=\emptyset$.
 Let $f$ be the indicator function associated with $(X_k)$.  Note that if $f(n)\ge m$, then $\rho\bigl(x(n),x(i)\bigr)>\delta$ for every $i\le m$.  We shall recursively construct a strictly increasing sequence $(n_k)$ in $\N$ such that $f(n_k)<n_{k+1}$.  Let $n_1:=1$.  The set $Y_1:=\{n\in\N:n>f(n_1)\}\in\alpha$.  Let $n_2\in Y_1$ be arbitrary.  Then $\rho\bigl(x(n_2),x(n_1)\bigr)>\delta$ and therefore $n_2>n_1$.  Suppose that $n_1<n_2<\cdots<n_k$ have been selected and $f(n_i)<n_{i+1}$ for every $i\le k-1$.  The set $Y_k:=\{n\in\N:n>f(n_k)\}\in\alpha$.  Let $n_{k+1}\in Y_k$.  Then $\rho\bigl(x(n_{k+1}),x(n)\bigr)>\delta$ for every $n\le n_k$, and in particular, $n_{k+1}>n_k$.  One has that either $O:=\bigcup_{i\in\N}[n_{2i-1},n_{2i}]$ or $E:=\bigcup_{i\in\N}[n_{2i},n_{2i+1}]$ belongs to $\alpha$.  Without loss of generality, suppose that $O\in\alpha$.  Observe that if $n\ge n_{2i-1}$ and $i>1$, then $n> f(n_{2i-2})$ and therefore $\rho\bigl(x(n),x(k)\bigr)>\delta$ for every $k< n_{2i-2}$.\\
Every $A\in\alpha$ must intersect an infinite number of the intervals $[n_{2i-1},n_{2i}]$, i.e. every $A\in\alpha$ contains an infinite $\delta$-discrete subset.  This proves {\rm(i)}.  If $\alpha$ is selective, there exists $A\in\alpha$ such that $A\cap [n_{2i-1},n_{2i}]$ consists precisely of one element, for every $i\ge 1$, i.e. $A$ is $\delta$-discrete.
\end{proof}

\begin{defn}\label{qppoint}
   An ultrafilter $\alpha\in \beta\omega\mysetminus \omega$ is called a \emph{quasi P-point} if for every countable and decreasing uniformity base $(\ee_k)$  on $\N$, one of the following assertions is true:
  \begin{enumerate}[{\rm(i)}]
    \item $\exists X\in\alpha\quad \exists k\in\N\quad\st\quad X^2\cap \ee_k\mysetminus \dd=\emptyset$, where $\dd$ equals the diagonal of $\N^2$,
    \item $\forall k\in\N\quad\exists X\in\alpha\quad\st\quad X^2\subset \ee_k$,
    \item $\forall A_n\downarrow\emptyset\ (\text{in }\alpha)\quad \forall k\in\N\quad \exists X\in\alpha\quad \st$
    \[X^k_n:=\{i\in X:\exists j\in A_n\ \st\ (i,j) \in\ee_k\}\]
    is cofinite in $X$, for every $n\in\N$.
  \end{enumerate}
\end{defn}

For every $\varepsilon>0$ we can define the relation $\prec_\varepsilon$ on $\nn$ by setting  $g\prec_\varepsilon f$ if  there exists $s\in\nn^\uparrow$ such that\footnote{For two subsets $A$ and $B$ of a metric space let us write $A\subset_\varepsilon B$ when $d(a, B)<\varepsilon $ for every $a\in\ A$.}
\[g^{-1}[s(n),\rightarrow]\subset_\varepsilon f^{-1}[n,\rightarrow]\qquad(\forall\ n\in\N).\]
The following proposition characterizes quasi P-points in a similar way that Corollary \ref{cPpoint} characterizes P-points.  This will be useful in the proof of Theorem \ref{qp}.

\begin{prop}\label{quasi}
  An ultrafilter $\alpha\in\beta\omega\mysetminus \omega$ is a quasi P-point if and only if for every metric on $\N$, w.r.t. which $\alpha$ is neither discrete nor Cauchy, the following statement holds
    \[\forall f\in\nn\quad \forall\varepsilon>0\quad \exists\,\alpha\text{-faithful}\ g\in\nn\ \st\  g\prec_\varepsilon f.\]
\end{prop}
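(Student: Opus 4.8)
The plan is to translate Definition~\ref{qppoint} into metric language and then prove the equivalence one metric at a time. To a metric $d$ on $\N$ associate the uniformity base $\ee_k:=\{(i,j):d(i,j)<1/k\}$; conversely, an arbitrary countable decreasing uniformity base is, after passing to a cofinal (normal) subsequence and applying the standard metrization lemma, generated by a pseudometric $d$ with $\{d<c_k\}\subseteq\ee_k\subseteq\{d<C_k\}$ for suitable $c_k,C_k\downarrow 0$. A routine comparison then shows that each of the alternatives (i), (ii), (iii) for $(\ee_k)$ is equivalent to the corresponding alternative for the base of open $1/m$-balls of $d$, and that (i) holds iff $\alpha$ is discrete w.r.t.\ $d$ while (ii) holds iff $\alpha$ is Cauchy. (If the pseudometric fails to be a metric, then some $\{d(\cdot,\cdot)=0\}$-class either lies in $\alpha$, whence (ii) holds outright, or one passes to the quotient metric; I would dispose of this degenerate case at the start.) Thus it suffices to prove, for every metric $d$ on $\N$: \emph{alternative (iii) holds for $(\ee_k)=(\{d<1/k\})$ if and only if for every non-$\alpha$-constant $f\in\nn$ and every $\varepsilon>0$ there is an $\alpha$-faithful $g\in\nn$ with $g\prec_\varepsilon f$} --- the $\alpha$-constant case of the proposition being vacuous, since then $f^{-1}[n,\rightarrow]=\emptyset$ for large $n$ and no $\alpha$-faithful (hence, by Proposition~\ref{faith}, $\alpha$-injective, hence $\alpha$-unbounded) $g$ can satisfy $g\prec_\varepsilon f$. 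Assembling the biconditional afterwards is bookkeeping: if $\alpha$ is a quasi P-point and $d$ is a metric for which $\alpha$ is neither discrete nor Cauchy then (i) and (ii) fail, so (iii) holds; conversely, given the displayed statement for all such metrics and an arbitrary base, one inspects whether $\alpha$ is discrete, Cauchy, or neither.

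For the direction (iii)$\Rightarrow$statement, fix a non-$\alpha$-constant $f$ and $\varepsilon>0$, put $A_n:=f^{-1}[n,\rightarrow]$ (so that $A_n\downarrow\emptyset$ in $\alpha$), $B_n:=\{i\in\N:d(i,A_n)\ge\varepsilon\}$ (an increasing sequence), $B:=\bigcup_nB_n$, and define $\tilde\phi\in\nn$ by $\tilde\phi(i):=\min\{n:i\in B_n\}$ for $i\in B$ and by any injection $\N\mysetminus B\hookrightarrow\N$ on the rest. The crucial claim is that $\tilde\phi$ is $\alpha$-finite-to-one. If $\N\mysetminus B\in\alpha$ this is immediate from the construction. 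If $B\in\alpha$, choose $k$ with $1/k\le\varepsilon$ and apply (iii) to $(A_n)$ and this $k$ to obtain $X\in\alpha$ with each $X^k_n$ cofinite in $X$; since $X^k_n=\{i\in X:d(i,A_n)<1/k\}\subseteq X\mysetminus B_n$, this forces $X\cap B_n$ finite for every $n$, so $\tilde\phi$ is finite-to-one on $X\cap B\in\alpha$. Now Proposition~\ref{finitetoone} supplies an $\alpha$-injective $g\in\nn$ and $h\in\nn^\uparrow$ with $g^{-1}[h(n),\rightarrow]\subseteq\tilde\phi^{-1}[n,\rightarrow]$; putting $s(n):=h(n+1)$ one gets $g(i)\ge s(n)\Rightarrow\tilde\phi(i)\ge n+1\Rightarrow i\notin B_n\Rightarrow d(i,A_n)<\varepsilon$, i.e.\ $g^{-1}[s(n),\rightarrow]\subset_\varepsilon f^{-1}[n,\rightarrow]$ for all $n$. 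Hence $g\prec_\varepsilon f$, and $g$ is $\alpha$-faithful by Proposition~\ref{faith}.

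For statement$\Rightarrow$(iii), fix $A_n\downarrow\emptyset$ in $\alpha$ and $k\in\N$, let $f\in\nn$ be the indicator function associated with $(A_n)$ so that $f^{-1}[n,\rightarrow]=A_n$, and choose $\varepsilon>0$ with $\{(i,j):d(i,j)<\varepsilon\}\subseteq\ee_k$. The hypothesis yields an $\alpha$-faithful --- hence, by Proposition~\ref{faith}, $\alpha$-injective --- $g\in\nn$ and $s\in\nn^\uparrow$ with $g^{-1}[s(n),\rightarrow]\subset_\varepsilon A_n$; let $X\in\alpha$ witness the injectivity of $g$. Then for each $n$, using $g^{-1}[s(n),\rightarrow]\subset_\varepsilon A_n$ and $\{d<\varepsilon\}\subseteq\ee_k$,
\[\{i\in X:g(i)\ge s(n)\}\ \subseteq\ \{i\in X:\exists j\in A_n,\ (i,j)\in\ee_k\}\ =\ X^k_n,\]
while $\{i\in X:g(i)<s(n)\}$ has at most $s(n)$ elements because $\restr{g}{X}$ is injective; hence $X^k_n$ is cofinite in $X$ for every $n$, which is exactly (iii). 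Combining the two directions with the trichotomy bookkeeping described in the first paragraph completes the proof.

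The step I expect to be the real obstacle is the case $B=\bigcup_nB_n\in\alpha$ in the implication (iii)$\Rightarrow$statement: this is where alternative (iii) is genuinely used, its cofiniteness clause being precisely what makes the ``first-time-in-$B_n$'' function finite-to-one on a set of $\alpha$, so that Proposition~\ref{finitetoone} can turn it into an $\alpha$-injective function dominated by an increasing composite --- the mechanism that ultimately produces $g\prec_\varepsilon f$. The remaining difficulties are routine: keeping the metrization constants straight (an entourage $\ee_k$ versus the $d$-balls it contains and is contained in) and disposing of non-separating pseudometrics.
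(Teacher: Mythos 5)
Your proposal is correct and follows essentially the same route as the paper's: reduce the uniformity base to a (pseudo)metric via the standard metrization theorem, build an $\alpha$-finite-to-one ``first time $\varepsilon$-far from $A_n$'' function whose superlevel sets lie $\subset_\varepsilon f^{-1}[n,\rightarrow]$ (your $B_n$/$\tilde\phi$ playing the role of the paper's $F_n$/$h$ inside $X$), and convert it to an $\alpha$-faithful $g$ via Propositions \ref{finitetoone} and \ref{faith}, with the converse direction being the same injectivity/cofiniteness count. Your explicit restriction to non-$\alpha$-constant $f$ matches the paper's implicit use of $f^{-1}[n,\rightarrow]\in\alpha$ for all $n$, so nothing essential is lost.
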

\begin{proof}
$(\Rightarrow )$  Let $\rho$ be an arbitrary metric on $\N$, and let $f\in\nn$ and $\varepsilon>0$ be given.  The sets
\[\ee_k:=\{(m,n)\in\N^2:\rho(m,n)<1/k\}\qquad (k\in\N)\]
is a countable and  decreasing uniformity base for $\N$.  For every $n\in\N$ the set $A_n:=f^{-1}[n,\rightarrow]$ belongs to $\alpha$ and $\bigcap_n A_n=\emptyset$.  Let $k>1/\varepsilon$.  By hypothesis, $\alpha$ is neither discrete nor Cauchy, so there exists $X\in\alpha$ such that
 \[X^k_n:=\{i\in X:\exists j\in A_n\ \st\ (i,j) \in\ee_k\}\]
    is cofinite in $X$, for every $n\in\N$, i.e. for every $n\in\N$ there exists a subset $X^k_n$ that is cofinite in $X$, and satisfying $X^k_n\subset_\varepsilon A_n=f^{-1}[n,\rightarrow]$.   Let $F_1$ be a finite subset of $X$ such that $X\mysetminus F_1\subset_{\varepsilon} f^{-1}[2,\rightarrow]$, and for every $n\ge 2$ let $F_n\subset X\mysetminus \bigcup_{i=1}^{n-1}F_i$ such that $X\mysetminus \bigcup_{i=1}^n F_i\subset_\varepsilon f^{-1}[n,\rightarrow]$.  Let $h\in\nn$ be the function defined by $h[F_n]=\{n\}$ and $h[\N\mysetminus X]=\{1\}$.  Then, $h$ is $\alpha$-finite-to-one and $h^{-1}[n,\rightarrow]\subset_{\varepsilon} f^{-1}[n,\rightarrow]$, for every $n\in\N$.  By virtue  of Propositions \ref{finitetoone} and \ref{faith}, it follows that there exists an $\alpha$-faithful $g\in\nn$ such that $g\prec_\varepsilon f$.\\
$(\Leftarrow)$ Let $(\ee_k)$ be a countable and decreasing uniformity base on $\N$.  We recall (see, for example, \cite[pg. 434, Theorem 8.1.21]{Engelking}) that in this case the uniformity on $\N$ generated by $(\ee_k)$ is induced by some metric $\rho$.  Note that for every $k\in \N$ there exists $\varepsilon_k>0$ such that $\rho(n,m)<\varepsilon_k$ implies that $(n,m)\in \ee_k$.  Suppose that {\rm(i)} and {\rm(ii)} of Definition \ref{qppoint} fail.  Then, $\alpha$ is neither discrete, nor Cauchy, w.r.t. $\rho$.  Suppose that $A_n\downarrow \emptyset$ in $\alpha$.  Let $f$ denote the indicator function associated to $(A_n)$.  Then, for every $k\in\N$ there exists $\alpha-$faithful $g\in\nn$ satisfying $g\prec_{\varepsilon_k} f$, i.e. (by virtue of Propositions  \ref{finitetoone} and \ref{faith}) there exists an $\alpha$-finite-to-one function $h\in\nn$ satisfying
\[h^{-1}[n,\rightarrow]\subset_{\varepsilon_k} f^{-1}[n,\rightarrow],\]
for every $n\in\N$.  Let $X\in\alpha$ such that $\restr{h}{X}$ is finite-to-one.  Fix $n\in\N$.  If $i\in X$ and $h(i)\ge n$, then there exists $j\in\N$ such that $\rho(i,j)<\varepsilon_k$ and $f(j)\ge n$.  This implies (since $\restr{h}{X}$  is finite-to-one) that the set $\{i\in X:\exists j\in A_n\ \st\ \rho(i,j)<\varepsilon_k\}$ is cofinite in $X$, and since
\[\{i\in X:\exists j\in A_n\ \st\ \rho(i,j)<\varepsilon_k\}\ \subset\ \{i\in X:\exists j\in A_n\ \st\ (i,j)\in\ee_k\}\]
it follows that $\{i\in X:\exists j\in A_n\ \st\ (i,j)\in\ee_k\}$  is cofinite in $X$.
\end{proof}

\begin{cor}
Every P-point is a quasi P-point.
\end{cor}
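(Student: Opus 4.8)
The plan is to check that a P-point $\alpha$ satisfies alternative {\rm(iii)} of Definition \ref{qppoint} for \emph{every} countable decreasing uniformity base $(\ee_k)$ on $\N$; in fact {\rm(iii)} will hold unconditionally, so alternatives {\rm(i)} and {\rm(ii)} need never be examined. Fix $(\ee_k)$, a sequence $A_n\downarrow\emptyset$ in $\alpha$, and a $k\in\N$; the goal is to produce a single $X\in\alpha$ for which $X^k_n=\{i\in X:\exists j\in A_n\ \st\ (i,j)\in\ee_k\}$ is cofinite in $X$ for every $n$. Since each entourage $\ee_k$ contains the diagonal $\dd$, one has $X\cap A_n\subset X^k_n$ (take $j=i$), so it is enough to produce $X\in\alpha$ with $X\mysetminus A_n$ finite for every $n$ — a pseudo-intersection of the decreasing family $(A_n)$ lying in $\alpha$.

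To obtain such an $X$ I use Corollary \ref{cPpoint}. Let $f\in\nn$ be the indicator function associated with $(A_n)$; since the $A_n$ are already decreasing, $f^{-1}[n,\rightarrow]=A_n$ for $n\ge2$, and as each $A_n\in\alpha$ the function $f$ is not $\alpha$-constant. Since $\alpha$ is a P-point, Corollary \ref{cPpoint} gives an $\alpha$-faithful $g\in\nn$ with $g\ll f$; by Proposition \ref{faith} $g$ is $\alpha$-injective, so fix $X\in\alpha$ with $\restr{g}{X}$ injective, and by Lemma \ref{l1} fix $s\in\nn^\uparrow$ with $g^{-1}[s(n),\rightarrow]\subset f^{-1}[n,\rightarrow]=A_n$ for all $n\ge2$. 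The key (and essentially only nonroutine) point is the counting observation that $X\mysetminus A_n$ is finite: if $i\in X$ and $i\notin A_n$ then $i\notin g^{-1}[s(n),\rightarrow]$, hence $g(i)<s(n)$, and injectivity of $\restr{g}{X}$ makes $\{i\in X:g(i)<s(n)\}$ finite. Thus $X\mysetminus A_n$ is finite for $n\ge2$, hence for $n=1$ as well since $A_1\supset A_2$, so $X\cap A_n$ — and therefore $X^k_n$ — is cofinite in $X$ for every $n\in\N$. As $(A_n)$ and $k$ were arbitrary, $\alpha$ is a quasi P-point.

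There is no real obstacle once Corollary \ref{cPpoint} is in hand: the content is just that a P-point admits a pseudo-intersection of any decreasing family drawn from it, which already forces the strongest of the three alternatives in Definition \ref{qppoint}. One could instead route the argument through Proposition \ref{quasi}, using that $g\ll f$ trivially implies $g\prec_\varepsilon f$ (any set is $\varepsilon$-contained in itself) and restricting to the divergent $f$ that actually occur there; or derive the pseudo-intersection directly from the partition form of the P-point property by partitioning $\N$ along the sets $A_n\mysetminus A_{n+1}$. I would present the direct verification above, as it is shortest and avoids mentioning metrics at all.
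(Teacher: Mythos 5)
Your proof is correct, but it takes a genuinely different route from the paper's. The paper disposes of this corollary in one line, by combining Proposition \ref{quasi} with Corollary \ref{cPpoint}: given a metric for which $\alpha$ is neither discrete nor Cauchy and a non $\alpha$-constant $f$, the P-point property supplies an $\alpha$-faithful $g$ with $g\ll f$, and $g\ll f$ trivially implies $g\prec_\varepsilon f$, so the characterization of quasi P-points in Proposition \ref{quasi} is met. You bypass Proposition \ref{quasi} (and hence the metrization of the uniformity) altogether and verify clause {\rm(iii)} of Definition \ref{qppoint} directly, reducing it to the pseudo-intersection property of P-points: since each entourage $\ee_k$ contains the diagonal, $X\cap A_n\subset X^k_n$, so it suffices to find $X\in\alpha$ with $X\mysetminus A_n$ finite for all $n$; you extract such an $X$ from Corollary \ref{cPpoint} together with Lemma \ref{l1} and Proposition \ref{faith}, the key point being that injectivity of $\restr{g}{X}$ bounds $\lvert\{i\in X:g(i)<s(n)\}\rvert$. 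Both arguments are sound and rest on the same underlying fact (Corollary \ref{cPpoint}). The paper's version is shorter given the machinery already in place and fits the pattern used later in Theorem \ref{qp}, where the $\prec_\varepsilon$ formulation is what is actually needed; yours is more self-contained, stays at the level of the definition, and yields the slightly stronger conclusion that clause {\rm(iii)} holds for \emph{every} uniformity base, with a single $X$ working simultaneously for all $k$ and all $n$ — in particular it makes transparent that for P-points the trichotomy in Definition \ref{qppoint} is never really needed.
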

\begin{proof}
  This follows  by  Proposition \ref{quasi} and Corollary \ref{cPpoint}.
\end{proof}

The following lemma will be used in the proof of Theorem \ref{qp}.

\begin{lem}
Let $X_n\downarrow \emptyset$ in $\alpha$, where $\alpha\in \beta\omega\mysetminus \omega$ and let $f$ be the indicator function of $(X_n)$.
For every  $g\in\nn$ satisfying $g\prec_\varepsilon f$, and for every $h\in\nn^\uparrow$, there exists  a family of consecutive intervals $\{I_n:n\in\N\}$  such that  $\bigcup_{n}I_n\in\alpha^g$ and
\[g^{-1} I_{n+1}\subset_\varepsilon X_{\sss{{h(\max I_n)}}}\qquad(\forall\ n\in\N).\]
\end{lem}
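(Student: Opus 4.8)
The plan is to reduce the whole construction to one well-chosen partition of $\N$ into consecutive finite blocks, followed by a single use of the fact that $\alpha^g$ is an ultrafilter. First I would record the mechanism that manufactures the required $\varepsilon$-inclusions. Since $g\prec_\varepsilon f$, fix $s\in\nn^\uparrow$ with $g^{-1}[s(n),\rightarrow)\subset_\varepsilon f^{-1}[n,\rightarrow)$ for every $n$. Because $(X_n)$ is decreasing, the indicator function $f$ of $(X_n)$ satisfies $f^{-1}[n,\rightarrow)\subseteq X_n$ (indeed with equality) for every $n\ge2$. Consequently, for any interval $I\subseteq\N$ and any $n\ge2$ with $\min I\ge s(n)$ we get $g^{-1}I\subseteq g^{-1}[s(n),\rightarrow)\subset_\varepsilon f^{-1}[n,\rightarrow)\subseteq X_n$, i.e. $g^{-1}I\subset_\varepsilon X_n$ --- using that $\subset_\varepsilon$ is preserved by shrinking the left set and enlarging the right set. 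This is the only place the hypothesis $g\prec_\varepsilon f$ is used.

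Next I would build the auxiliary partition. Put $c_0:=0$, $c_1:=2$, and $c_{k+1}:=\max\{c_k+1,\,s(h(c_k))\}$ for $k\ge1$; then $(c_k)$ is strictly increasing with $c_{k+1}\ge s(h(c_k))$ for all $k\ge1$. Let $J_m:=\{c_{m-1}+1,\dots,c_m\}$ for $m\in\N$, a partition of $\N$ into consecutive finite intervals. Since $\bigcup_m J_{2m-1}$ and $\bigcup_m J_{2m}$ partition $\N$, exactly one of them belongs to the ultrafilter $\alpha^g$; in the first case put $I_n:=J_{2n-1}$ for all $n$, in the second put $I_n:=J_{2n}$ for all $n$. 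Either way $\{I_n:n\in\N\}$ is a family of consecutive intervals and $\bigcup_n I_n\in\alpha^g$ --- this membership being obtained for free from maximality of $\alpha^g$ rather than by trying to build the union up from its finite pieces, which would not work in general, since a countable union of sets not in $\alpha^g$ may well lie in $\alpha^g$. Finally, $\max I_n\ge c_1=2$ in the first case and $\max I_n\ge c_2\ge3$ in the second, and $h$ is strictly increasing, so $h(\max I_n)\ge\max I_n\ge2$ for every $n$; this is what makes it legitimate to invoke the mechanism from the first paragraph with the index $h(\max I_n)$.

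It remains to verify $g^{-1}I_{n+1}\subset_\varepsilon X_{h(\max I_n)}$ for every $n$. In the first case $\max I_n=c_{2n-1}$ and $\min I_{n+1}=c_{2n}+1\ge s(h(c_{2n-1}))=s(h(\max I_n))$ by the growth condition applied at $k=2n-1$; in the second case $\max I_n=c_{2n}$ and $\min I_{n+1}=c_{2n+1}+1\ge s(h(c_{2n}))=s(h(\max I_n))$ by the growth condition at $k=2n$. In both cases the mechanism from the first paragraph, applied with the index $h(\max I_n)\,(\ge2)$ and the interval $I_{n+1}$, gives $g^{-1}I_{n+1}\subset_\varepsilon X_{h(\max I_n)}$, which is exactly the assertion.

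The one real subtlety --- and the reason for routing through the blocks $(J_m)$ --- is that the $I_n$ cannot be taken mutually adjacent: adjacency would force $\min I_{n+1}=\max I_n+1$, whereas the conclusion forces $\min I_{n+1}\ge s(h(\max I_n))$, and $s\circ h$ may grow so fast that no integer satisfies both. Splitting $\N$ into the blocks $J_m$ and then retaining only every other block creates exactly the slack required, and demanding $c_{k+1}\ge s(h(c_k))$ for \emph{all} $k$ --- not only for odd, or only for even, indices --- is what lets the single partition serve whichever of the two parity classes $\alpha^g$ turns out to contain. The rest is routine, the only mild care being the shift $c_1:=2$, which keeps $h(\max I_n)\ge2$ throughout.
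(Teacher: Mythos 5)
Your proof is correct and follows essentially the same route as the paper's: fix the witness $s$ from $g\prec_\varepsilon f$, build a rapidly increasing sequence via $s\circ h$, cut $\N$ into the resulting consecutive blocks, and use maximality of $\alpha^g$ to keep one of the two parity classes of blocks as the $I_n$. Your version is in fact slightly more careful than the paper's on two minor points (forcing $c_{k+1}>c_k$ via the max, and restricting to indices $\ge 2$ where $f^{-1}[n,\rightarrow)=X_n$), but the underlying argument is the same.
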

\begin{proof}
Let $s\in\nn^\uparrow$ such that $g^{-1}[s(n),\rightarrow]\subset_\varepsilon f^{-1}[n,\rightarrow]$ for every $n\in\N$.  Thus, for every $i\in\N$ satisfying $g(i)\ge s(h(n))$, there exists $i'\in\N$ such that $\rho(i,i')<\varepsilon$ and $f(i')\ge h(n)$.  Therefore $\rho(i,X_{h(n)})<\varepsilon$.  Let $n_1:=1$, and for $k>1$, let $n_k:=s(h(n_{k-1}))$.
 Then, either $X:=\bigcup_{k=1}^\infty [n_{2k},n_{2k+1}]$ or $Y:=\bigcup_{k=1}^\infty]n_{2k-1},n_{2k}[$ belongs to $\alpha^g$.  Without loss of generality, assume that $X\in \alpha^g$.  Let $I_k:=[n_{2k},n_{2k+1}]$.  If $i\in \N$ and $g(i)\in I_{k+1}$, then $g(i)\ge s(h(n_{2k+1}))$, and therefore $\rho(i,X_{\sss{h(n_{2k+1})}})<\varepsilon$.  This implies that  $g^{-1} I_{k+1}\subset_\varepsilon X_{\sss{h(\max I_{k})}}$.
\end{proof}

\subsection{Flat Ultrafilters}

Let  $x=(\eta_i)\in\ell^\infty$.  For every $f\in \nn^\uparrow$ and $k\in\N$,  define
\[\omega^k_f(x):=\max\{|\eta_i-\eta_j|\,:\,f^{k-1}(1)\le i\le j< f^{k}(1)\},\]
 where $f^0(1):=1$ and $f^{k+1}(1):=f(f^{k}(1))$.  Let
  \[\omega_f(x):=\sup_{k\in\N}\omega^k_f(x).\]
In this way the set $\ell^\infty$ can be identified with a subset of $\R^\nn$ as follows:  To every $x\in\ell^\infty$ one associates $x^\ast:\nn\to \R^+$ defined by $x^\ast:f\mapsto \omega_f(x)$.

\begin{defn} A \emph{flatness scale for an ultrafilter $\alpha$} is a sequence $(s_n)$ in $[0,1]^\N$ satisfying:
\begin{enumerate}[{\rm(i)}]
  \item there exists $r>0$  such that $\sup_i s_n(i)\ge r$ and $\lim_{i\to\infty} s_n(i)=0$ for every $n\in \N$,
  \item $\lim_{n\to\alpha} \omega_f(s_n)=0$ for every $f\in\nn^\uparrow$.
\end{enumerate}
When $\alpha\in\beta\omega\mysetminus \omega$ admits a flatness scale, $\alpha$  is called a \emph{flat ultrafilter}.
\end{defn}

\begin{rem}\label{r1}Let $(a_{ij})$ be an infinite matrix with values in $[0,1]$.  Define $b_{ij}:=\sup_{k\ge j}a_{ik}$. For $p\le q$ in $\N$ we have
  \[\max\{|b_{ij}-b_{ik}|:p\le j\le k\le q\}\ =\ b_{ip}-b_{iq}\ =\ \max_{p\le j<q} a_{ij}\,\vee\, b_{iq}\ -\ b_{iq}\ \le\ \max_{p\le j<q} a_{ij}\ -\ a_{iq}\]
  and therefore
  \[\max\{|b_{ij}-b_{ik}|:p\le j\le k\le q\}\ \le\ \max\{|a_{ij}-a_{ik}|:p\le j\le k\le q\}.\]
   This shows that a flat ultrafilter admits a flatness scale $(s_n)$ satisfying  $s_n(i+1)\le s_n(i)\le s_n(1)=1$ for every $i$ and $n$ in $\N$.
\end{rem}

\begin{prop}\label{flatP1}
An ultrafilter $\alpha\in\beta\omega\mysetminus \omega$ is flat if and only if  there is a sequence of decreasing functions $(s_n)$ in $ [0,1]^\N$, such that
\begin{enumerate}[(i)]
\item $\lim_{i\to\infty}s_n(i)=0$, for all $n\in\N$.
\item $s_n(1)=1$, for all $n\in\N$.
\item $\lim_{n\to\alpha} \Vert s_n- s_n\circ f\Vert_{\sss{\infty}} =0$, for all $f\in \nn^\uparrow$.
\end{enumerate}
\end{prop}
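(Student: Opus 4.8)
The plan is to derive the Proposition from a two-sided estimate that I will establish for every \emph{decreasing} $x=(\eta_i)\in[0,1]^\N$ and every $f\in\nn^\uparrow$: there exist $g_1,g_2\in\nn^\uparrow$, depending only on $f$, with
\[\omega_f(x)\ \le\ \Vert x-x\circ f\Vert_{\sss{\infty}}\ \le\ \omega_{g_1}(x)+\omega_{g_2}(x).\]
Throughout I write $b_k:=f^{k-1}(1)$, so that $\{[b_k,b_{k+1}):k\in\N\}$ partitions $\N$, the oscillation of a decreasing $x$ over a (nonempty) interval of indices is its first value minus its last, and in particular $\omega^k_f(x)=\eta_{b_k}-\eta_{b_{k+1}-1}$; note also that $\Vert x-x\circ f\Vert_{\sss{\infty}}=\sup_i(\eta_i-\eta_{f(i)})$ because $x$ is decreasing and $f(i)\ge i$. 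The left inequality is immediate: since $\eta_{b_{k+1}-1}\ge\eta_{b_{k+1}}=\eta_{f(b_k)}$ we get $\omega^k_f(x)\le\eta_{b_k}-\eta_{f(b_k)}\le\Vert x-x\circ f\Vert_{\sss{\infty}}$ and take the supremum over $k$. This already gives the ``if'' direction: a sequence $(s_n)$ of decreasing functions satisfying (i)--(iii) has $\sup_i s_n(i)=s_n(1)=1$ and $\lim_i s_n(i)=0$, and $\omega_f(s_n)\le\Vert s_n-s_n\circ f\Vert_{\sss{\infty}}$ forces $\lim_{n\to\alpha}\omega_f(s_n)=0$ for every $f\in\nn^\uparrow$; hence $(s_n)$ is a flatness scale and $\alpha$ is flat.

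The work lies in the right inequality. First I would note that $f\le f'$ pointwise implies $\Vert x-x\circ f\Vert_{\sss{\infty}}\le\Vert x-x\circ f'\Vert_{\sss{\infty}}$ for decreasing $x$, which lets me replace $f$ by $\max(f,2\,\id)\in\nn^\uparrow$ and so assume $f(m)\ge2m$ for all $m$. I then take $g_1:=f\circ f\in\nn^\uparrow$, whose blocks are the intervals $[b_{2m-1},b_{2m+1})$; and I observe that, because $f(m)\ge2m$, the sequence $1=b_1<b_2<b_4<b_6<\cdots$ has non-decreasing gaps and is therefore the orbit of $1$ under some $g_2\in\nn^\uparrow$, whose blocks are $[b_1,b_2)$ together with the intervals $[b_{2m},b_{2m+2})$, $m\ge1$. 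For $i\in\N$, letting $k$ be the index with $i\in[b_k,b_{k+1})$, monotonicity of $f$ gives $f(i)\in[b_{k+1},b_{k+2})$, hence $0\le\eta_i-\eta_{f(i)}\le\eta_{b_k}-\eta_{b_{k+2}-1}$, the oscillation of $x$ over $[b_k,b_{k+2})$; that interval is a block of $g_1$ when $k$ is odd and a block of $g_2$ when $k$ is even, so $\eta_i-\eta_{f(i)}\le\omega_{g_1}(x)+\omega_{g_2}(x)$ in all cases, and taking the supremum over $i$ finishes it.

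With the estimate in hand, the ``only if'' direction is short: by Remark \ref{r1} a flat $\alpha$ admits a flatness scale $(s_n)$ with $s_n(i+1)\le s_n(i)\le s_n(1)=1$, which gives (i) and (ii); and for any $f\in\nn^\uparrow$, choosing $g_1,g_2$ as above for $\hat f:=\max(f,2\,\id)$ yields $\Vert s_n-s_n\circ f\Vert_{\sss{\infty}}\le\Vert s_n-s_n\circ\hat f\Vert_{\sss{\infty}}\le\omega_{g_1}(s_n)+\omega_{g_2}(s_n)$, which tends to $0$ along $\alpha$ since $(s_n)$ is a flatness scale, proving (iii). The main obstacle I anticipate is the right inequality, and more precisely seeing that \emph{two} moduli are both needed and enough: a single $\omega_f$ cannot dominate $\Vert x-x\circ f\Vert_{\sss{\infty}}$ because the jump of $x$ across an $f$-block boundary is invisible to $\omega_f$, and the device that fixes this is to interleave the coarser block structures of $f\circ f$ and of the auxiliary $g_2$ so that every pair of consecutive $f$-blocks sits inside one block of one of the two; the regularisation replacing $f$ by $\max(f,2\,\id)$ is the harmless step that makes the $g_2$ construction possible.
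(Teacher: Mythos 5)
Your proof is correct, and it is worth comparing with the paper's. The skeleton of your ``only if'' direction coincides with the paper's: both pass from an arbitrary flatness scale $(t_n)$ to the normalized decreasing envelope $s_n(j)=\sup_{k\ge j}t_n(k)/\Vert t_n\Vert_{\sss{\infty}}$ via Remark \ref{r1}. The genuine difference is in how condition (iii) is obtained. The paper simply asserts that $\{n:\Vert t_n-t_n\circ f\Vert_{\sss{\infty}}<r\varepsilon\}\in\alpha$ and then transfers this to $(s_n)$; but the flatness hypothesis only controls the moduli $\omega_g(t_n)$, and passing from these to $\Vert t_n-t_n\circ f\Vert_{\sss{\infty}}$ is exactly the non-obvious step. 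Your two-sided estimate $\omega_f(x)\le\Vert x-x\circ f\Vert_{\sss{\infty}}\le\omega_{g_1}(x)+\omega_{g_2}(x)$ for decreasing $x$, with $g_1=\hat f\circ\hat f$ and the interleaved block structure $g_2$ built on $1,b_2,b_4,\dots$, supplies precisely the missing justification: any pair $i,f(i)$ lies in two consecutive $\hat f$-blocks, hence in a single block of $g_1$ or of $g_2$. (A small remark: the non-decreasing-gaps property of $1,b_2,b_4,\dots$ already follows from $f$ being strictly increasing with $f(1)\ge 2$, since $f(m+d)\ge f(m)+d$; your regularization $\hat f=\max(f,2\,\id)$ is harmless and makes this transparent, and it also disposes of the degenerate case $f(1)=1$ where the $\omega$-blocks collapse.) You also write out the easy ``if'' direction via the left inequality, which the paper omits. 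In short, your argument buys a complete and self-contained proof of the equivalence between the $\omega_f$-definition used in this paper and the original definition of \cite[Definition 3.2]{FaPhSt2010}, at the cost of the explicit two-block-structure lemma; the paper's version is shorter but leaves the key inequality unproved.
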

\begin{proof}

Let $(t_n)$ be a flatness scale for $\alpha$ and let $r>0$  such that $\Vert t_n\Vert_{\sss{\infty}}=\sup_i t_n(i)\ge r$ for each $n\in\N$.  Define $s_n(j):=\sup_{k\ge j}t_n(k)/ \Vert t_n\Vert_{\sss{\infty}} $.  It is clear that $(s_n)$ satisfies (i) and (ii) and each $s_n$ is decreasing.  To check (iii), let $\varepsilon>0$ be given.  Then $\{n\in\N:\Vert t_n- t_n\circ f\Vert_{\sss{\infty}}<r\varepsilon\}\in\alpha$.  Hence by Remark \ref{r1},  $\{n\in\N:  \Vert t_n\Vert_{\sss{\infty}}\Vert s_n- s_n\circ f\Vert_{\sss{\infty}}<r\varepsilon\}\in\alpha$ and this implies $\{n\in\N:  \Vert s_n- s_n\circ f\Vert_{\sss{\infty}}<\varepsilon\}\in\alpha$.

\end{proof}

Note that this characterization was the original definition of flat ultrafilter  \cite[Definition 3.2]{FaPhSt2010}.

\begin{prop}\cite[Theorem 3.3]{FaPhSt2010}
Flat ultrafilters exist in ZFC.
\end{prop}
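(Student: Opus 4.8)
The plan is to exhibit, entirely within ZFC, a sequence $(s_n)$ of the kind described in Proposition~\ref{flatP1} together with a non-principal ultrafilter $\alpha$ for which $(s_n)$ is a flatness scale; by that proposition this is exactly what ``$\alpha$ is flat'' means. For $f\in\nn^\uparrow$ and $\varepsilon>0$ set $B_{f,\varepsilon}:=\{n\in\N:\omega_f(s_n)<\varepsilon\}$. The reduction is then: if $(s_n)$ can be chosen so that the family $\{B_{f,\varepsilon}:f\in\nn^\uparrow,\ \varepsilon>0\}$ together with the Fr\'echet filter has the finite intersection property, let $\gm$ be the filter it generates; every ultrafilter $\alpha\in\beta\omega\mysetminus\omega$ with $\alpha\supseteq\gm$ satisfies $\lim_{n\to\alpha}\omega_f(s_n)=0$ for all $f\in\nn^\uparrow$, so $(s_n)$ is a flatness scale for $\alpha$ and $\alpha$ is flat. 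The existence of such an extending ultrafilter is a routine Zorn's lemma argument, so everything comes down to constructing the sequence $(s_n)$ with the stated intersection property.

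For the construction I would let each $s_n$ be a non-increasing step function decreasing from $1$ to $0$ whose jumps are of equal size $1/m_n$ and occur at an increasing sequence of \emph{scales} $1=c^n_0<c^n_1<\dots<c^n_{m_n}$; by Remark~\ref{r1} there is no loss in taking the jumps equal, and then $\omega_f(s_n)$ equals $1/m_n$ times the greatest number of the scales $c^n_k$ that can lie inside a single $f$-block $[f^{j-1}(1),f^j(1))$. The issue to be navigated is that the naive choice — one nested sequence of uniformly spaced scales, with $s_n$ using the first $n$ of them — fails: it is defeated simultaneously for all large $n$ by one sufficiently fast $f\in\nn^\uparrow$, namely one whose iterates grow fast enough that a single block engulfs a fixed positive fraction of the scales relevant to $s_n$. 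The scales must instead be placed far out and interleaved across the levels $n$ in a carefully chosen pattern (naturally, deeper and deeper in the iterated-exponential hierarchy as $n$ grows, although the precise spacing is delicate) so that no single $f\in\nn^\uparrow$ can engulf a non-negligible fraction of the scales of $s_n$ for cofinitely many $n$. Note that this is the most one can hope for: since no individual $s_n$ can be flat against every $f$, and since one cannot make every $B_{f,\varepsilon}$ cofinite without forcing every non-principal ultrafilter to be flat, for suitable $f,\varepsilon$ the set $B_{f,\varepsilon}$ will be infinite but co-infinite.

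The heart of the proof, and the step I expect to be the main obstacle, is then verifying the finite intersection property for the resulting family: for all $f_1,\dots,f_k\in\nn^\uparrow$ and all $\varepsilon>0$, the set $\{n:\omega_{f_i}(s_n)<\varepsilon\text{ for }i=1,\dots,k\}$ must be infinite. The approach I would take is to replace $f_1,\dots,f_k$ by the single coarsest block partition they jointly generate below a given position, and then, exploiting that the $f_i$ are \emph{fixed} while the scales of $s_n$ recede into the hierarchy, to produce an infinite set of $n$ along which every block of that joint partition meets only $o(m_n)$ of the scales $c^n_0,\dots,c^n_{m_n}$, forcing $\omega_{f_i}(s_n)<\varepsilon$ for all $i$ on that infinite set. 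Once this combinatorial estimate is in hand, $\gm$ is a proper filter, any ultrafilter extending it is non-principal and flat, and the whole argument has stayed within ZFC; hence flat ultrafilters exist in ZFC.
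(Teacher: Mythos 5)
Your reduction is sound and is essentially the one the paper uses in disguise: showing that the family $\{B_{f,\varepsilon}\}$ together with the Fr\'echet filter has the finite intersection property is the same as showing that $0^\ast$ lies in the closure of $\{s_n^\ast:n\in\N\}$ in $\R^{\nn}$, and extending the generated filter to an ultrafilter then yields flatness. The problem is that the entire content of the proposition lies in the step you defer --- constructing $(s_n)$ and verifying the finite intersection property --- and your proposal does not carry it out. You say explicitly that the ``precise spacing is delicate'' and that the verification is ``the step I expect to be the main obstacle,'' and the plan you sketch (one pre-committed, interleaved pattern of scales receding into the iterated-exponential hierarchy, followed by a uniform $o(m_n)$ covering estimate against the joint block partition of $f_1,\dots,f_k$) is not backed by any actual estimate. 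It is also genuinely dangerous: once the scales of each $s_n$ are fixed in a finite window, an adversarially chosen $f\in\nn^\uparrow$ gets to place its block boundaries \emph{after} seeing the whole pattern, and for any single prescribed $s_n$ there is an $f$ one of whose blocks engulfs all of its scales. Ruling out that a single $f$ does this to cofinitely many $n$ simultaneously is exactly the delicate combinatorial fact you would need, and nothing in the proposal establishes it.

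The paper sidesteps this difficulty entirely, and you should compare your plan with its move: instead of designing one clever sequence, it takes $X$ to be the set of \emph{all} decreasing sequences in $[0,1]\cap\Q$ starting at $1$ and eventually $0$ --- a countable set, which serves as the index set of the flatness scale --- and then, given $f_1,\dots,f_k$ and $\varepsilon>0$, chooses the witness \emph{after} the $f_i$ are known: with $f:=\max(f_1,\dots,f_k)$ and $1/m<\varepsilon$, it takes the step function with $m$ equal jumps placed exactly at the iterates $k_p=f^{p-1}(1)$, so that each $f$-block meets exactly one scale and $\omega_f(x)=1/m<\varepsilon$. No diagonalization, no growth estimates, and the finite intersection property is immediate (varying $m$ gives infinitely many witnesses, so the filter is compatible with the Fr\'echet filter). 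To repair your proof you should either adopt this ``take all candidates and tailor the witness to the given $f_i$'' device, or actually supply the missing combinatorial lemma for your fixed interleaved sequence; as written, the argument is not complete.
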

\begin{proof}
 Let $X$ denote the set of all decreasing sequences in $[0,1]\cap \Q$, starting at $1$, and eventually equal to $0$.  Since $X$ is countable, the existence of a flat ultrafilter would follow if we  show that $0^\ast$ belongs to the closure of $X^\ast:=\{x^\ast:x\in X\}$.   Let $\uu\subset \R^\nn$ be an open neighborhood of $0^\ast$, and let  $\{f_1,\dots,f_n\}\subset\nn$ and $\varepsilon>0$ be such that
\[\{x^\ast\ :\ x\in X,\,\omega_{f_i}(x)<\varepsilon\ \forall i\le n\}\ \subset\ \uu.\]
It suffices to exhibit $x\in X$ satisfying $\omega_f(x)<\varepsilon$, where  $f:=\max(f_1,\dots,f_n)$.  To this end,  define, recursively, the sequence $(k_i)$ by setting $k_1:=1$ and, for $i\ge 2$, $k_i:=f^{i-1}(1)$.
Choose $m\in\N$ such that $1/m<\varepsilon$ and let $x:=(\xi_i)$ be defined by
\[\xi_i:=\begin{cases}
0\quad&\text{if }i\ge k_{m+1}\\
1-(p-1)/m\quad&\text{if }  k_{p}\le i<k_{p+1}\text{ for some }p\le m.
\end{cases}\]
It is clear that $\omega_f(x)=1/m$.
\end{proof}

\begin{prop} \label{3f-lem3} Let $(s_n)$ be a flatness scale for $\alpha\in\beta\omega\mysetminus \omega$.  Then the following statements are true.
\begin{enumerate}[{\rm(i)}]
\item $\lim_{n\to\alpha} s_n(i)\,=\,\lim_{n\to\alpha} s_n(j)$ for every $i,j\in\N$.
\item  For every $r>0$ and for every $A\in\alpha$, the set
\[K(r,A):=\bigl\{k\in\N:\bigl(s_n(k):n\in A\bigr)\text{ {\rm is eventually $\ge r$}}\bigr\}\]
is finite.
\end{enumerate}
\end{prop}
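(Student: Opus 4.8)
The plan is to establish (i) directly and (ii) by contradiction; (ii) is where essentially all the work lies. For (i), fix $i,j\in\N$ with $i\le j$ and take any $f\in\nn^\uparrow$ with $f(1)>j$, for instance $f(m):=m+j$. Then $i$ and $j$ both lie in the first $f$-block $[1,f(1))$, so $|s_n(i)-s_n(j)|\le\omega^1_f(s_n)\le\omega_f(s_n)$ for every $n\in\N$; since $(s_n)$ is a flatness scale, $\lim_{n\to\alpha}\omega_f(s_n)=0$, and therefore $\lim_{n\to\alpha}|s_n(i)-s_n(j)|=0$, which gives $\lim_{n\to\alpha}s_n(i)=\lim_{n\to\alpha}s_n(j)$.

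For (ii), suppose $K(r,A)$ were infinite for some $r>0$ and $A\in\alpha$. First I would pass to a more convenient scale. Set $b_n(j):=\sup_{k\ge j}s_n(k)$ and $\tilde s_n:=b_n/\Vert b_n\Vert_{\sss{\infty}}$. The matrix estimate of Remark \ref{r1} yields $\omega_f(b_n)\le\omega_f(s_n)$ for every $f\in\nn^\uparrow$, while $\Vert b_n\Vert_{\sss{\infty}}=\sup_k s_n(k)$ is bounded below by a fixed positive constant (the one in the definition of a flatness scale); hence $(\tilde s_n)$ is again a flatness scale, now with each $\tilde s_n$ non-increasing and $\tilde s_n(1)=1$. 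Since $\tilde s_n(k)\ge s_n(k)$, the set $K(r,A)$ formed from $(s_n)$ is contained in the one formed from $(\tilde s_n)$, so it suffices to contradict the latter; I rename $(\tilde s_n)$ as $(s_n)$. As each $s_n$ is now non-increasing, $K(r,A)$ is an initial segment of $\N$, and being infinite it equals $\N$. Writing $J_n:=\max\{j:s_n(j)\ge r\}$ and $J'_n:=\max\{j:s_n(j)\ge r/2\}$ (both finite, since $s_n(j)\to0$), the hypothesis $K(r,A)=\N$ says precisely that $J_n\to\infty$ as $n\to\infty$ through $A$; in particular $\{n:J_n>\beta\}\in\alpha$ for every $\beta\in\N$, while $\{n:J_n\le\beta\}$ is always $\alpha$-small.

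The remaining task is to manufacture a single $g\in\nn^\uparrow$ with $\omega_g(s_n)\ge r/2$ for $\alpha$-many $n$, which will contradict $\lim_{n\to\alpha}\omega_g(s_n)=0$. The key point is that if some $g$-block contains the transition interval $[J_n,J'_n+1]$, then that block carries a value $\ge r$ and a value $<r/2$, so $\omega_g(s_n)\ge r/2$; hence it is enough to choose the increasing sequence of block boundaries $1=\beta_0<\beta_1<\dots$ so that for $\alpha$-many $n$ the interval $[J_n,J'_n+1]$ sits inside a single gap $(\beta_k,\beta_{k+1})$. I would define the $\beta_k$ recursively: having fixed $\beta_k$, the set $\{n\in A:J_n>\beta_k\}$ lies in $\alpha$, and — splitting it according to which gap contains $J_n$ and running the alternating-union argument from the proof of Proposition \ref{P3}, where $\alpha$ decides between two competing unions — one picks $\beta_{k+1}$ far enough out that the indices $n$ whose transition interval is severed at $\beta_{k+1}$ are pushed into an $\alpha$-small set.

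The real obstacle is exactly this recursion. Testing flatness against the functions whose blocks have length at most $2$ shows that for every $\varepsilon>0$ the set $\{n:|s_n(i)-s_n(i+1)|<\varepsilon\text{ for all }i\}$ belongs to $\alpha$; since a non-increasing sequence dropping from $r$ to below $r/2$ in steps smaller than $\varepsilon$ uses more than $r/(2\varepsilon)$ coordinates, this forces $\{n:J'_n-J_n\ge C\}\in\alpha$ for every $C$ — the transition intervals are unboundedly long modulo $\alpha$. Consequently the gaps $(\beta_k,\beta_{k+1})$ must themselves grow without bound, and one has to place them, through repeated appeals to the maximality of $\alpha$, so as not to cut $\alpha$-many of these long, drifting intervals; making that precise is the crux. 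Once such a $g$ is available, $\{n:\omega_g(s_n)\ge r/2\}\in\alpha$ contradicts $(s_n)$ being a flatness scale, so $K(r,A)$ is finite.
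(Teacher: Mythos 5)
Part (i) of your argument is fine and matches the paper's (you argue directly with $f(m)=m+j$; the paper argues by contradiction, but the content is identical). The preliminary reduction in (ii) to a non-increasing scale via Remark \ref{r1} is correct and harmless, though the paper does not need it. The problem is that for (ii) you have only set up the target and then stopped at what you yourself call ``the crux'': you never actually construct the block boundaries $\beta_k$. That construction is essentially the entire content of part (ii), so as written the proof of (ii) is incomplete. Worse, the sketch you do give does not close: choosing $\beta_{k+1}$ so that the indices $n$ whose transition interval $[J_n,J'_n+1]$ is severed at $\beta_{k+1}$ form an $\alpha$-small set at each stage is not enough, because the total set of severed indices is a countable union of $\alpha$-small sets, and such a union need not be $\alpha$-small. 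Since you also (correctly) observe that the transition intervals are unboundedly long modulo $\alpha$, there is no a priori reason the severed indices cannot accumulate into a set of $\alpha$-measure one, and your appeal to ``repeated maximality of $\alpha$'' does not address this.

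The paper's proof avoids the severing problem entirely by inverting the quantifiers: instead of fixing gaps and asking which $n$ fit, it builds the coordinate sequence and the index bands together. Concretely, with $m(F):=\min\{i\in K(r,A):(\forall n\in F)(\forall j\ge i)\,(s_n(j)\le r/2)\}$ and a threshold function $g$ recording from which index on $(s_n(k))_{n\in A}$ stays $\ge r$, it sets $I_{i+1}:=\{n\in A:n\le g(m(I_i))\}$. Then every $n$ in the band $I_{i+2}\setminus I_{i+1}$ automatically satisfies $s_n(m(I_i))\ge r$ (it lies beyond the threshold for the already-chosen left endpoint) and $s_n(m(I_{i+2}))\le r/2$ (the right endpoint is chosen after the band, large enough for all of it). The bands partition $A$, so nothing is severed, and a single even/odd split on the bands produces one sequence $(\ell_i)$ with $\bigcup_i\{n\in A:|s_n(\ell_i)-s_n(\ell_{i+1})|\ge r/2\}\in\alpha$, whence a block function violating flatness. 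You would need to carry out some such interleaved recursion (your per-stage ``$\alpha$-small'' bookkeeping replaced by an exact partition of $A$ into bands, each with a tailor-made gap) before your argument for (ii) can be considered a proof.
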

\begin{proof}
{\rm(i)}~Define $s(i):=\lim_{n\to\alpha}s_n(i)$ and suppose that $|s(i)-s(j)|>\varepsilon$ for some $\varepsilon>0$ and $i,j\in\N$.  Let $A\in\alpha$ such that $|s_n(i)-s(i)|$ and $|s_n(j)-s(j)|$ are both less than $\varepsilon/4$, for every $n\in A$.  This implies that $|s_n(i)-s_n(j)|>\varepsilon/2$ for every $n\in A$.  This contradicts the fact that $(s_n)$ is a flatness scale for $\alpha$.\\
{\rm(ii)}~Suppose that $K(r,A)$ is infinite, for some $r>0$ and $A\in\alpha$.  We show that $(s_n)$ is not a flatness scale for $\alpha$.  To this end we shall construct a strictly increasing sequence $(\ell_i:i\ge0)$ such that
\[\bigcup_{i\ge 0}\bigl\{n\in A\ :\  |s_n(\ell_i)-s_n(\ell_{i+1})|\ge r/2 \bigr\}\in \alpha.\]
Let $\{k_n:n\in\N\}$ be an enumeration of $K(r,A)$ such that  $k_n<k_{m}$ for $n<m$.  For every finite subset $F$ of $A$ let
\[m(F):=\min\{i\in K(r,A)\ :\ (\forall n\in F)\ (\forall j\ge i)\ (s_n(j)\le r/2)\}.\]
Let $\bigl(g(k_n):n\in\N\bigr)$ be a strictly increasing sequence in $A$ such that  $s_i(k)\ge r$ for every $i\in A$ satisfying $i\ge g(k_n)$.  We shall construct a strictly increasing sequence $(I_i:i\ge 0)$ of intervals of $A$ such that $\bigcup_{i\ge 0}I_i=A$, and
\begin{equation}\label{e1}
s_n(m(I_i))\ge r\qquad\text{and}\qquad s_n(m(I_{i+2}))\le r/2,
\end{equation}
for every $n\in I_{i+2}\mysetminus I_{i+1}$ and $i\ge 0$.    Note that we can suppose that $1\in A$.   Set $I_0:=\{1\}$ and,  recursively, let $I_{i+1}:=\{n\in A:1\le n\le g(m(I_i))\}$.  Observe that
\[s_n(m(I_i))\le r/2\quad\forall\,n\le g(m(I_{i-1}))\quad\text{and}\quad s_n(m(I_i))\ge r\quad\forall\,n\ge g(m(I_i))\,,\]
i.e.  (\ref{e1}) follows and $m(I_{i})>m(I_{i-1})$ for every $i\ge 1$.  Therefore,  $(I_i:i\ge 0)$ is  a strictly  increasing sequence of intervals of $A$ and so  $\bigcup_{i\ge 0}I_i=A$.

We conclude the proof as follows.  For $i\ge 0$ let $\ell_i:=m(I_{2i})$.  Then,
\[|s_n(\ell_i)-s_n(\ell_{i+1})|\ge r/2\]
for every $n\in I_{2i+2}\mysetminus I_{2i+1}$ by (\ref{e1}).  If $\bigcup_{i\ge 0}I_{2i+2}\mysetminus I_{2i+1}\in \alpha$ we are done.  Otherwise, we have that $\bigcup_{i\ge 1}I_{2i+1}\mysetminus I_{2i} \in\alpha$.  In this case, for every $i\ge 1$ we define $\ell_i:=m(I_{2i-1})$ and observe that $|s_n(\ell_i)-s_n(\ell_{i+1})|\ge r/2$ for every $n\in I_{2i+1}\mysetminus I_{2i}$ by (\ref{e1}).
\end{proof}

\subsection{$3$f-property}

\begin{defn}
The ultrafilter $\alpha\in\beta\omega\mysetminus \omega$ is said to have the \emph{three functions property}  (3f-property in short) if there exists a bijection $\phi:\N\to\N^2$,  such that for every $f\in\nn$, there exists $A\in\alpha$ satisfying either
\begin{enumerate}[{\rm(i)}]
\item $\restr{f}{A}$ is constant, or
\item  $\restr{f}{A}$ is finite-to-one, or
\item there is a one-to-one function $\overline f\in\nn$ such that $\restr{f}{A}=\overline f\circ\pi\circ
\restr{\phi}{A}$,  where $\pi:\N^2\to \N$ denotes the projection onto the first coordinate.
\end{enumerate}
\end{defn}

We will say in this case that $\alpha$ has the 3f-property w.r.t. $\phi$.  This property has been isolated by A.~ Blass in  \cite{Bla2015}.  It should be noted that the  definition given in \cite{Bla2015}  differs slightly from ours  because for the second case A.~Blass requires $f$ to be $\alpha$-injective instead of $\alpha$-finite-to-one.   It is easy to see that every P-point has the three function property (we can think of P-point ultrafilters as ultrafilters having the two functions property).  However there are ultrafilters with the three function property that are not P-points.  In fact, in \cite{BlaDoRa2013}  the authors study certain ultrafilters added generically by a countably closed Forcing notion. They refer to these ultrafilters as "the next best thing to a P-point" ultrafilters. These ultrafilter enjoy  the 3f-property and  they are not P-points but they are weak P-points. They also consider sums of non-isomorphic selective ultrafilters indexed by a selective ultrafilter, they showed that these ultrafilters  share the 3f-property but they are neither P-points nor weak P-points.   In  \cite{Bla2015}, it is also shown that ultrafilters that are not P-points but satisfy certain Ramsey-like partition properties have the 3f-property.  Hence the class of ultrafilters with the 3f-property properly contains the P-point ultrafilters.  However,  the existence of ultrafilters with the 3f-property cannot be proved within ZFC,  for if $\alpha$ has the 3f-property and it is not a P-point, then there is $f:\N\to \N$ witnessing the later and it is easy to see that $\alpha^f=\{f^{-1}(A): A\in\alpha\}$ is a P-point.

For a subset $X\subset \N^2$ and $i\in\N$ we set  $X^i:=\{ j\in \N: (i,j)\in X\}$. Recall that if $\phi$ is bijective,  then $\alpha^\phi=\{\phi(A): A\in \alpha\}$ and $\alpha\thicksim_{RK}\alpha^\phi$.

\begin{rem}  Suppose that the ultrafilter $\alpha$ satisfies the $3$f-property w.r.t. $\phi$. For every $A\subset \N$ let $\hat A:=\phi[A]$,
\[\hat{A}_0:=\bigcup\{\hat{A}\,^j\,:\,|\hat{A}\,^j|<\infty\},\]
and
\[\hat{A}_\infty:=\bigcup\{\hat{A}\,^j\,:\,|\hat{A}\,^j|=\infty\}.\]
Observe that if $\hat{A}_0\in\alpha^\phi$ for some $A\in\alpha$, then $\alpha$ is a P-point.  Otherwise, for every $A\in\alpha$, there exists $B\in\alpha$, $B\subset A$ such that every vertical section $\phi[B]\,^j$ $(j\in\N)$ is an infinite set.
\end{rem}

\begin{prop}\label{3fP1}
Let $(a_{i\,j}:i,j\in\N)$ be an infinite matrix with entries in $[0,1]$ satisfying:
\begin{enumerate}[{\rm(i)}]
\item $1=a_{i\,1}\ge a_{i\,j}\ge a_{i\,j+1}$, for every $i,j\in\N$, and
\item $\lim_{j\to\infty} a_{i\,j}=0$ for every $i\in\N$.
\end{enumerate}
Let $\alpha$ be an ultrafilter satisfying the $3$f-property.  Then, for every $0<\delta<1/4$ there exists  $A\in \alpha$ and two  non-decreasing sequences $(m_k)$ and $(n_k)$ in $\N$ satisfying $m_k<n_k$ for every $k\in\N$,  such that
\[A=\bigcup_{k\in\N}\{i\in A:a_{i\,m_k}- a_{i\,n_k} >\delta\}.\]
\end{prop}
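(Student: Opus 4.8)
The plan is to apply the $3$f-property to a single function built from the matrix and then to read the two sequences off the values that this function takes on the resulting member of $\alpha$. Define $f\in\nn$ by
\[f(i):=\min\{j\in\N:a_{i\,j}<1-\delta\}.\]
Hypotheses (i) and (ii) make $f$ well defined, with $f(i)\ge 2$ for every $i$ (because $a_{i\,1}=1$), and by construction $a_{i\,1}-a_{i\,f(i)}=1-a_{i\,f(i)}>\delta$, so the pair $(1,f(i))$ records a $\delta$-drop in row $i$. Let $\phi$ be a bijection witnessing the $3$f-property of $\alpha$; applying the $3$f-property to $f$ yields $A\in\alpha$ realizing one of the three alternatives of the definition, and in each of them I would exhibit suitable non-decreasing sequences $(m_k)$, $(n_k)$ with $m_k<n_k$.

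Alternatives (i) and (ii) are handled at once. If $\restr{f}{A}$ is constant, say $f\equiv c$ on $A$, put $m_k:=1$ and $n_k:=c$ for all $k$: then $m_k<n_k$ since $c\ge 2$, both sequences are non-decreasing, and $A=\{i\in A:a_{i\,1}-a_{i\,c}>\delta\}$ because $f(i)=c$ for each $i\in A$. If $\restr{f}{A}$ is finite-to-one then, $A$ being infinite, $f$ assumes infinitely many values on $A$; enumerate them increasingly as $c_1<c_2<\cdots$ and put $m_k:=1$, $n_k:=c_k$. Again $m_k<n_k$ (each $c_k\ge 2$) and both sequences are non-decreasing, and since every $i\in A$ satisfies $f(i)=c_k$ for a unique $k$ we get $i\in\{i'\in A:a_{i'\,1}-a_{i'\,n_k}>\delta\}$; this gives $A\subseteq\bigcup_k\{i\in A:a_{i\,m_k}-a_{i\,n_k}>\delta\}$, the reverse inclusion being obvious.

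The substantive case is (iii): there is a one-to-one $\overline f\in\nn$ with $\restr{f}{A}=\overline f\circ\pi\circ\restr{\phi}{A}$, where $\pi\colon\N^2\to\N$ is the projection onto the first coordinate. Then $f$ is constant, with value $\overline f(m)$, on each trace $A^{(m)}:=A\cap\phi^{-1}(\{m\}\times\N)$, and the injectivity of $\overline f$ makes these values pairwise distinct. If only finitely many $A^{(m)}$ are non-empty, then $A$ is their union and one of them, say $A^{(m_0)}$, lies in $\alpha$; on $A^{(m_0)}$ the function $f$ is constant, so after replacing $A$ by $A^{(m_0)}$ we are in alternative (i). If infinitely many $A^{(m)}$ are non-empty, then $f$ has infinite range on $A$, and we argue exactly as in the finite-to-one case, enumerating that range in increasing order. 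In every case we obtain $A\in\alpha$ together with non-decreasing sequences $(m_k)$, $(n_k)$ satisfying $m_k<n_k$ and $A=\bigcup_{k\in\N}\{i\in A:a_{i\,m_k}-a_{i\,n_k}>\delta\}$, as required.

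I expect the only real point to be alternative (iii): one must notice that the fibers of $\pi\circ\phi$ are precisely the vertical lines of $\phi$, so that, together with the injectivity of $\overline f$, the restriction $\restr{f}{A}$ is either constant on a subset of $A$ still belonging to $\alpha$ or has infinite range on $A$; once this dichotomy is in hand, everything reduces---as in the other two alternatives---to elementary bookkeeping about the monotone rows of $(a_{i\,j})$.
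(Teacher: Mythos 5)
Your argument does verify the statement exactly as it is printed, but in a way that should have set off alarm bells: with your choice $m_k\equiv 1$ the conclusion is true for \emph{every} matrix satisfying (i)--(ii) and \emph{every} ultrafilter (take $A=\N$, $m_k:=1$, and let $(n_k)$ enumerate the range of $f$), so the $3$f-property and your entire case analysis play no role. The proposition as worded is weaker than what the paper's proof actually delivers and, more importantly, weaker than what is used afterwards. In Corollary \ref{f3f} one needs a single $f\in\nn^\uparrow$ with $f(m_k)\ge n_k$ for all $k$, and in Remark \ref{rem2} and Theorem \ref{t1} one needs to extract a strictly increasing sequence $(j_k)$ recording consecutive drops. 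Both require that $\sup\{n_k:m_k=m\}<\infty$ for each $m$ --- in practice, that $(m_k)$ be strictly increasing and matched one-to-one with $(n_k)$, or that both sequences be constant. Your sequences $m_k\equiv1$, $n_k\to\infty$ admit no such $f$, so the downstream results would collapse.

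The idea you are missing is the paper's use of a second (and third) threshold function: besides $f(i)=\min\{j:a_{i\,j}<1-\delta\}$ it introduces $g(i)=\min\{j:a_{i\,j}<1-2\delta\}$ and $h(i)=\min\{j:a_{i\,j}<1-3\delta\}$. On each finite level set $F_k=\{i:f(i)=m_k+1\}$ one has $a_{i\,m_k}\ge1-\delta$ uniformly, and taking $n_k\ge g(i)$ for all $i\in F_k$ (possible because $F_k$ is finite) gives $a_{i\,n_k}<1-2\delta$ uniformly, hence a drop exceeding $\delta$ between \emph{strictly increasing} indices $m_k<n_k$. A single function cannot do this: knowing only $f(i)=m_k+1$ tells you $a_{i\,m_k}\ge1-\delta>a_{i\,m_k+1}$, and that drop can be arbitrarily small, which is exactly why you were forced back to the column $j=1$. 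The third alternative of the $3$f-property must then be applied to $f$ and $g$ simultaneously (in the case where neither is $\alpha$-constant nor $\alpha$-finite-to-one), so that the finite level sets of the injective $\bar f$ replace the $F_k$ and the same construction goes through.
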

\begin{proof}
Let $0<\delta<1/4$.  For every $i\in\N$ let $f(i):=\min\{j\in\N:a_{i\,j}<1-\delta\}$, $g(i):=\min\{j\in\N:a_{i\,j}<1-2\delta\}$ and $h(i):=\min\{j\in\N:a_{i\,j}<1-3\delta\}$.  Note that $f\le g\le h$.

If there exists $A\in\alpha$ and $n\in\N$ such that $f(i)=n$ for every $i\in A$ (i.e. $f$  is $\alpha$-constant) we let  $m_k:=1$ and $n_k:=n$ for every $k\in\N$.  Note that if $h$ is $\alpha$-constant, then so is $g$, and if $g$ is $\alpha$-constant, then so is $f$.

Suppose that $f$ is $\alpha$-finite-to-one.  In this case there exists $A\in\alpha$ and a partition $\{F_k:k\in\N\}$ of $A$ into finite subsets such that $f(i)=f(i')$ if and only if $i$ and  $i'$ belong to the same $F_k$ for some $k\in\N$.  Let $m_k\in\N$ such that $f[F_k]=\{m_k+1\}$.  After reorganization (if necessary) we can suppose that $(m_k)$ is (strictly) increasing.  Inductively, one can construct a (strictly) increasing sequence $(n_k)$ such that $g(i)\le n_k$ for every $i\in F_k$ and  $k\in\N$.  Observe that $a_{i\,m_k}\ge 1-\delta$ and $a_{i\,n_k}<1-2\delta$ for every $i\in F_k$.  A similar argument (involving $g$ and $h$)  holds if we suppose that $g$ is $\alpha$-finite-to-one.

Let us suppose that neither $f$ nor $g$ is $\alpha$-constant or $\alpha$-finite-to-one.  Then there are finite-to-one functions $\bar f$ and $\bar g$ in $\nn$, and $A\in\alpha$, such that $\restr{f}{A}=\restr{\bar f\circ\pi\circ \phi}{A}$ and $\restr{g}{A}=\restr{\bar{g}\circ\pi\circ\phi}{A}$.  Let $\{F_k:k\in\N\}$ be a partition of $\N$ into finite subsets such that $f(i)=f(i')$ if and only if $i$ and  $i'$ belong to the same $F_k$ for some $k\in\N$.  Let $m_k\in\N$ such that $f[F_k]=\{m_k+1\}$.   After reorganization, we can suppose that $(m_k)$ is (strictly) increasing.   Inductively, one can construct a (strictly) increasing sequence $(n_k)$ such that $\bar g(i)\le n_k$ for every $i\in F_k$ and  $k\in\N$.  Observe that if $i\in A\,\cap\,((\phi^{-1}\circ\pi^{-1}) (F_k))$, then $g(i)=(\bar g\circ\pi\circ\phi)(i)\le n_k$, and therefore, $a_{i\,m_k}\ge 1-\delta$ and $a_{i\,n_k}<1-2\delta$.
\end{proof}

\begin{cor}\label{f3f}
If an ultrafilter $\alpha$ is flat, then it cannot satisfy the $3$f-property.
\end{cor}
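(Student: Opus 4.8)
The plan is to argue by contradiction: suppose $\alpha$ is both flat and satisfies the $3$f-property, and derive that $\alpha$ cannot admit a flatness scale. By Remark \ref{r1}, if $\alpha$ is flat it admits a flatness scale $(s_n)$ with $s_n(i+1)\le s_n(i)\le s_n(1)=1$ for all $i,n$, and by definition there is $r>0$ with $\sup_i s_n(i)\ge r$ for every $n$. After rescaling as in Proposition \ref{flatP1} we may moreover assume $\sup_i s_n(i)=1$, so that the matrix $a_{ij}:=s_i(j)$ satisfies hypotheses (i) and (ii) of Proposition \ref{3fP1} (here the first index of the matrix plays the role of $n$, the second of the coordinate). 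Fix $\delta$ with $0<\delta<1/4$; I would simply take $\delta=1/8$, say.

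Next I would invoke Proposition \ref{3fP1} with this matrix and this $\delta$: it produces $A\in\alpha$ and non-decreasing sequences $(m_k),(n_k)$ with $m_k<n_k$ such that
\[A=\bigcup_{k\in\N}\bigl\{i\in A:s_i(m_k)-s_i(n_k)>\delta\bigr\}.\]
The idea now is that the indices $m_k,n_k$ can be organized into consecutive blocks on which some strictly increasing function $f\in\nn^\uparrow$ has its ``jumps'', so that $s_i(m_k)-s_i(n_k)\le\omega_f(s_i)$ for the relevant $i$; then the displayed union shows $\omega_f(s_i)>\delta$ for $\alpha$-many $i$, i.e. $\{i:\omega_f(s_i)>\delta\}\in\alpha$, contradicting condition (ii) in the definition of a flatness scale, which forces $\lim_{n\to\alpha}\omega_f(s_n)=0$.

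To make the choice of $f$ work I would first pass to a subsequence (or merge duplicate values) so that $(m_k)$ and $(n_k)$ are genuinely non-decreasing with $m_k<n_k$, and arrange $n_k\le m_{k+1}$ after discarding indices or shrinking $A$ within $\alpha$ — monotonicity of each $s_i$ means that enlarging the gap $[m_k,n_k]$ only increases $s_i(m_k)-s_i(n_k)$, so this step is harmless. Then I define $f\in\nn^\uparrow$ by choosing its iterates $f^0(1)=1<f^1(1)<f^2(1)<\cdots$ so that each interval $[m_k,n_k]$ is contained in some single block $[f^{j-1}(1),f^j(1))$; concretely one can interleave the $f^j(1)$ with the points $m_k,n_k$. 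For any $i$ in the $k$-th set of the union above, $m_k\le j\le n_k$ sits inside one block, hence $|s_i(m_k)-s_i(n_k)|\le\omega^j_f(s_i)\le\omega_f(s_i)$, giving $\omega_f(s_i)>\delta$. Since the union is all of $A\in\alpha$, we get $\{i:\omega_f(s_i)>\delta\}\supseteq A\in\alpha$, so $\omega_f(s_i)$ does not tend to $0$ along $\alpha$ — the desired contradiction. The main obstacle is the bookkeeping in this last paragraph: one must choose $f$ so that \emph{every} pair $(m_k,n_k)$ lands in a single block while keeping $f$ strictly increasing, and be careful that the sequences from Proposition \ref{3fP1} are only non-decreasing (possibly with repeats), so a little preprocessing is needed before the block structure can be read off. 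Everything else is routine once the matrix is normalized and Proposition \ref{3fP1} is applied.
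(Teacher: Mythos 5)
Your strategy coincides with the paper's in outline: normalize the flatness scale via Proposition \ref{flatP1}, feed the matrix $a_{ij}:=s_i(j)$ into Proposition \ref{3fP1}, and contradict flatness with a suitable $f\in\nn^\uparrow$. The difference is that you test flatness against the original $\omega_f$ definition, whereas the paper tests it against condition {\rm(iii)} of Proposition \ref{flatP1}, and this choice is what creates all of your difficulties. The paper's argument is one line: take any $f\in\nn^\uparrow$ with $f(m_k)\ge n_k$; since each $s_i$ is decreasing, $\Vert s_i-s_i\circ f\Vert_\infty\ \ge\ s_i(m_k)-s_i(f(m_k))\ \ge\ s_i(m_k)-s_i(n_k)\ >\ \delta$ for every $i$ in the $k$-th set of the union, so $A\subset\{i:\Vert s_i-s_i\circ f\Vert_\infty>\delta\}\in\alpha$ and {\rm(iii)} fails. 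No blocks and no disjointness of the intervals $[m_k,n_k]$ are needed.

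The step you yourself flag as ``the main obstacle'' is, as written, a genuine gap. Your block construction needs every $[m_k,n_k]$ to sit inside a single block $[f^{j-1}(1),f^j(1))$ of a partition of $\N$ into consecutive finite blocks, and for that the intervals must be separable by block boundaries. Arranging $n_k\le m_{k+1}$ does not suffice: adjacent intervals sharing an endpoint must land in the same block, and this constraint can chain through all $k$ (e.g.\ $m_k=k$, $n_k=k+1$), forcing an infinite block. More importantly, ``discarding indices'' destroys the hypothesis $A=\bigcup_k\{\dots\}$; after discarding you must still argue that the union over the surviving $k$'s lies in $\alpha$. Your observation that enlarging a gap is harmless covers merging, not this separation step. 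The correct repair is exactly Remark \ref{rem2}: extract $(k_j)$ with $n_{k_j}<m_{k_{j+1}}$ and use that one of the two unions over odd or even $j$ belongs to the ultrafilter, which yields genuinely separated intervals $[m_{k_j},m_{k_{j+2}}]$. A further point affecting both your write-up and the paper's: the literal statement of Proposition \ref{3fP1} is satisfied vacuously by $m_k=1$, $n_k=k+1$, $A=\N$ (because $s_i(j)\to0$), and for that choice no admissible $f$ exists; one must use the sequences actually produced in the proof of that proposition, where $(m_k)$ is strictly increasing unless both sequences are constant. So your proof is repairable and follows the paper's route in essence, but the bookkeeping needed to make the $\omega_f$ formulation work is not supplied, and switching to Proposition \ref{flatP1}{\rm(iii)} removes the need for it entirely.
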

\begin{proof}
This follows by Propositions \ref{3fP1} and \ref{flatP1}; one simply needs to consider a function $f\in\nn^\uparrow$ satisfying $f(m_k)\ge n_k$ for every $k\in\N$.
\end{proof}

\begin{rem}\label{rem2}
  With the same notation of the proof of Proposition \ref{3fP1}, note that if we don't have the  (simple) case where $(m_k)$ and $(n_k)$ are constant (i.e. if $f$ is not $\alpha$-constant), we can define a strictly increasing sequence  $(k_j)$ in $\N$ such that $n_{k_j}<m_{k_{j+1}}$ for every $j\in\N$.  Observe then, that either
\[A_o=\bigcup_{\substack{j\in\N\\j\text{ is odd}}}\{i\in A:a_{i\,m_{k_j}}- a_{i\,m_{k_{j+2}}} >\delta\}\in\alpha\]
or
\[A_e=\bigcup_{\substack{j\in\N\\j\text{ is even}}}\{i\in A:a_{i\,m_{k_j}}- a_{i\,m_{k_{j+2}}} >\delta\}\in\alpha .\]
\end{rem}

\section{Ultrapowers of a metric space - some considerations}\label{sec4}

Let $(M,\rho)$ be a metric space  and let $\mathcal M$ denote the set of all bounded sequences in $M$.  Denote the members of $\mathcal M$ by $\mathbf x=(x_n)$, $\mathbf y=(y_n)$ and so on.  The following is easily seen to be an equivalence relation on $\mathcal M$
  \[\Delta:=\{(\mathbf x,\mathbf y)\ :\ \{n\in\N:\rho(x_n,y_n)<\varepsilon\}\in\mathcal F\ \forall \varepsilon>0\},\]
  and associated to the ultrafilter $\alpha\in\beta\omega\mysetminus \omega$ we further have the following equivalence relations:
  \begin{align*}
  \Delta^\alpha_0:= & \{(\mathbf x,\mathbf y)\ :\ \{n\in\N:x_n=y_n\}\in\alpha\}, \\
  \Delta^\alpha_1:= & \{(\mathbf x,\mathbf y)\ :\ \exists A\in\alpha\ \st\  \{n\in\N:\rho(x_n,y_n)<\varepsilon\}\in\mathcal F(A)\quad \forall \varepsilon>0 \},\\
    \Delta^\alpha_2:= &\{(\mathbf x,\mathbf y)\ :\ \{n\in\N:\rho(x_n,y_n)<\varepsilon\}\in\alpha\ \forall \varepsilon>0\}.
     \end{align*}
The functions
\[\rho_\infty:\mathcal M\times\mathcal M\to [0,1]:\bigl((x_n),(y_n)\bigr)\mapsto \sup_{n\in\N}\rho(x_n,y_n),\]
and
\[\rho_\alpha:\mathcal M\times\mathcal M\to [0,1]:\bigl((x_n),(y_n)\bigr)\mapsto \lim_\alpha\rho(x_n,y_n),\]
define,  respectively, a metric and a pseudo-metric on $\mathcal M$.

The following inclusions can easily be verified.
 \[\begin{array}{ccccc}
          {} & {} & {\Delta} & {} & {} \\
           {} & {} & \rotatebox{90}{$\supset$} & {} & {} \\
          {\Delta^\alpha_0} & {\subset} & \Delta^\alpha_1 & \subset & \Delta^\alpha_2
        \end{array}\]

In the complete lattice of all the  equivalence relations on $\mathcal M$ (ordered by set-theoretic inclusion) one has the equality $\Delta^\alpha_1=\Delta\vee\Delta^\alpha_0$.

\begin{prop}
  The closure of $\Delta^\alpha_0$ is equal to $\Delta^\alpha_2$ w.r.t. both  metrics $\rho_\infty\times\rho_\infty$ and $\rho_\alpha\times\rho_\alpha$.
\end{prop}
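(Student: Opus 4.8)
The plan is to prove the two inclusions $\overline{\Delta^\alpha_0}\subset\Delta^\alpha_2$ and $\Delta^\alpha_2\subset\overline{\Delta^\alpha_0}$, treating both (pseudo)metric topologies simultaneously. First I would record the identity $\Delta^\alpha_2=\{(\mathbf x,\mathbf y)\in\mathcal M\times\mathcal M:\rho_\alpha(\mathbf x,\mathbf y)=0\}$; indeed, $\lim_\alpha\rho(x_n,y_n)=0$ holds precisely when $\{n\in\N:\rho(x_n,y_n)<\varepsilon\}\in\alpha$ for every $\varepsilon>0$, which is the defining condition of $\Delta^\alpha_2$. Since $\rho_\alpha$ is continuous (in fact $1$-Lipschitz) on $\mathcal M\times\mathcal M$ with respect to $\rho_\alpha\times\rho_\alpha$, its zero set $\Delta^\alpha_2$ is closed for $\rho_\alpha\times\rho_\alpha$; and because $\rho_\alpha\le\rho_\infty$ the topology of $\rho_\infty\times\rho_\infty$ is finer, so $\Delta^\alpha_2$ is $\rho_\infty\times\rho_\infty$-closed as well. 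As $\Delta^\alpha_0\subset\Delta^\alpha_2$ is immediate (on a set in $\alpha$ where $x_n=y_n$ one has $\rho(x_n,y_n)=0<\varepsilon$), this already gives $\overline{\Delta^\alpha_0}\subset\Delta^\alpha_2$ for both metrics. (Alternatively one may verify directly that the complement of $\Delta^\alpha_2$ is open, via an $\varepsilon/3$ triangle estimate: if $\{n:\rho(x_n,y_n)\ge\varepsilon_0\}\in\alpha$, then any $(\mathbf x',\mathbf y')$ with $\rho_\infty(\mathbf x,\mathbf x'),\rho_\infty(\mathbf y,\mathbf y')<\varepsilon_0/3$ --- or the $\rho_\alpha$-analogue, using that $\lim_\alpha$ of a bound gives a set in $\alpha$ --- still has $\rho(x'_n,y'_n)>\varepsilon_0/3$ on a set in $\alpha$.)

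For the reverse inclusion I would argue by an explicit splicing. Fix $(\mathbf x,\mathbf y)\in\Delta^\alpha_2$ and $\varepsilon>0$, and set $A:=\{n\in\N:\rho(x_n,y_n)<\varepsilon\}$, which lies in $\alpha$ by hypothesis. Define $\mathbf y'=(y'_n)$ by $y'_n:=x_n$ for $n\in A$ and $y'_n:=y_n$ for $n\notin A$; as $\{y'_n:n\in\N\}\subset\{x_n:n\in\N\}\cup\{y_n:n\in\N\}$ is bounded, we have $\mathbf y'\in\mathcal M$. Since $x_n=y'_n$ on $A\in\alpha$, the pair $(\mathbf x,\mathbf y')$ belongs to $\Delta^\alpha_0$. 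Moreover $\rho_\infty(\mathbf x,\mathbf x)=0$ and $\rho_\infty(\mathbf y,\mathbf y')=\sup_n\rho(y_n,y'_n)\le\varepsilon$, because the only nonzero terms occur for $n\in A$, where $\rho(y_n,y'_n)=\rho(y_n,x_n)<\varepsilon$. Hence $(\mathbf x,\mathbf y')$ lies within $\varepsilon$ of $(\mathbf x,\mathbf y)$ for $\rho_\infty\times\rho_\infty$, and a fortiori for $\rho_\alpha\times\rho_\alpha$ since $\rho_\alpha\le\rho_\infty$. As $\varepsilon>0$ was arbitrary, $(\mathbf x,\mathbf y)\in\overline{\Delta^\alpha_0}$ in both (pseudo)metrics, which with the first part yields $\overline{\Delta^\alpha_0}=\Delta^\alpha_2$.

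I do not expect a genuine obstacle: the core of the reverse inclusion is a one-line truncation of the sequence $\mathbf y$ onto the coordinate set $A$. The only points that merit a moment's care are bookkeeping ones --- noting that $\rho_\alpha\le\rho_\infty$ makes the $\rho_\infty$-topology the finer of the two, so that closedness of $\Delta^\alpha_2$ transfers upward from $\rho_\alpha\times\rho_\alpha$ to $\rho_\infty\times\rho_\infty$ while the approximation by members of $\Delta^\alpha_0$ transfers downward --- and the trivial check that the spliced sequence $\mathbf y'$ is a bounded sequence in $M$, hence a legitimate element of $\mathcal M$.
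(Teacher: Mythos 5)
Your proof is correct and follows essentially the same route as the paper: establish that $\Delta^\alpha_0$ is dense in $\Delta^\alpha_2$ for $\rho_\infty\times\rho_\infty$ (hence for the coarser $\rho_\alpha\times\rho_\alpha$) and that $\Delta^\alpha_2$ is closed for $\rho_\alpha\times\rho_\alpha$ (hence for the finer $\rho_\infty\times\rho_\infty$). The only cosmetic differences are that you spell out the splicing argument for density, which the paper dismisses as easy, and you obtain closedness by viewing $\Delta^\alpha_2$ as the zero set of the $1$-Lipschitz function $\rho_\alpha$ rather than by the paper's direct $\varepsilon/3$ sequential argument (which you also sketch).
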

\begin{proof}
It is easy to see that $\Delta_0^\alpha$ is dense in $\Delta_2^\alpha$ w.r.t. $\rho_\infty\times\rho_\infty$ (and therefore w.r.t. $\rho_\alpha\times\rho_\alpha$).  We show that $\Delta^\alpha_2$ is closed w.r.t. $\rho_\alpha\times\rho_\alpha$.  (This will imply that $\Delta^\alpha_2$ is closed w.r.t. $\rho_\infty\times\rho_\infty$.)  Suppose that $(\mathbf x^n)$, $(\mathbf y^n)$ are sequences in $\mathcal M$ such that $\lim_{n\to\infty}\mathbf x^n=\mathbf x$,  $\lim_{n\to\infty} \mathbf y^n=\mathbf y$ (w.r.t. $\rho_\alpha$) and suppose that $(\mathbf x^n,\mathbf y^n)\in \Delta^\alpha_2$ for every $n\in\N$.  Fix an arbitrary $\varepsilon>0$.  We show that $\{i\in\N:\rho(x_i,y_i)<\varepsilon\}\in\alpha$.  Let $\delta:=\varepsilon/3$.  The assumption $(\mathbf x^n,\mathbf y^n)\in\Delta^\alpha_2$ implies that $\{i\in\N:\rho(x^n_i,y^n_i)<\delta\}\in\alpha$.  We also have that
    \[J:=\{n\in\N:\rho_\alpha(\mathbf x^n,\mathbf x)\vee\rho_\alpha(\mathbf y^n,\mathbf y)<\delta\}\in\mathcal F,\]
    and so, for every $n\in J$ we have
    \[\{i\in\N:\rho(x^n_i,x_i)\vee\rho(x^n_i,y^n_i)\vee\rho(y^n_i,y_i)<\delta\}\in\alpha.\]
    Hence, $\{i\in\N:\rho(x_i,y_i)<\varepsilon\}\in\alpha$.
\end{proof}

\begin{prop}\label{Ppoint1}
 If $\alpha$ is a P-point, then $\Delta^\alpha_1=\Delta^\alpha_2$.
 \end{prop}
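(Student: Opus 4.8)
The plan is to reduce the claim to the classical pseudo-intersection characterization of P-points. Since the inclusion $\Delta^\alpha_1\subseteq\Delta^\alpha_2$ is immediate and has already been noted above, it suffices to prove $\Delta^\alpha_2\subseteq\Delta^\alpha_1$ under the assumption that $\alpha$ is a P-point.

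So I would fix $(\mathbf x,\mathbf y)\in\Delta^\alpha_2$ and put $r_n:=\rho(x_n,y_n)$. By definition of $\Delta^\alpha_2$ the sets $B_k:=\{n\in\N:r_n<1/k\}$ all belong to $\alpha$, and $(B_k)$ is a decreasing sequence in $\alpha$. The one substantive step is to produce a single $A\in\alpha$ which is a pseudo-intersection of $(B_k)$, i.e. $A\setminus B_k$ is finite for every $k$. I would derive this from the P-point hypothesis: one checks that the partition formulation used in the text is equivalent to the assertion that every decreasing sequence in $\alpha$ has a pseudo-intersection in $\alpha$. The translation is routine --- if $\bigcap_k B_k\in\alpha$ take $A=\bigcap_k B_k$; otherwise subtract $\bigcap_k B_k$ from every $B_k$ to arrange $\bigcap_k B_k=\emptyset$, apply the P-point property to the partition $\{B_k\setminus B_{k+1}:k\in\N\}\cup\{\N\setminus B_1\}$ (whose blocks lie outside $\alpha$, being disjoint from members of $\alpha$), and observe that the resulting $X\in\alpha$ satisfies $X\setminus B_k=(X\setminus B_1)\cup\bigcup_{j<k}\bigl(X\cap(B_j\setminus B_{j+1})\bigr)$, a finite set.

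It then remains to verify that such an $A$ witnesses membership in $\Delta^\alpha_1$: given $\varepsilon>0$, pick $k$ with $1/k\le\varepsilon$; then $\{n\in A:\rho(x_n,y_n)\ge\varepsilon\}\subseteq A\setminus B_k$ is finite, so $\{n\in\N:\rho(x_n,y_n)<\varepsilon\}\in\mathcal F(A)$, and since $\varepsilon$ was arbitrary this gives $(\mathbf x,\mathbf y)\in\Delta^\alpha_1$. Combined with $\Delta^\alpha_1\subseteq\Delta^\alpha_2$ we obtain equality. The only genuine obstacle is the pseudo-intersection lemma, but this is a standard and well-known reformulation of the P-point property; it is insensitive to the metric-space setting, and the remaining verifications are routine manipulations with cofinite filters.
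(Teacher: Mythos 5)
Your proposal is correct and follows essentially the same route as the paper: the paper also takes the decreasing sequence $A_i=\{j:\rho(x_j,y_j)<\varepsilon_i\}$ in $\alpha$, splits on whether $\bigcap_iA_i\in\alpha$, and in the nontrivial case removes the intersection and applies the P-point partition property to $\{B_j\mysetminus B_{j+1}\}$ to obtain a set $X\in\alpha$ along which $\rho(x_j,y_j)\to0$. Your reformulation via pseudo-intersections is just the standard repackaging of that same step.
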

 \begin{proof}  Suppose that $(\mathbf x,\mathbf y)\in\Delta^\alpha_2$.  Let $\varepsilon_i\downarrow 0$ in $\R$ and let $A_i:=\{j\in\N:\rho(x_j,y_j)<\varepsilon_i\}$.  Then $A_i\in\alpha$ and $A_i\downarrow$.  If $\bigcap_{i\in\N}A_i\in\alpha$, then $\{i\in\N:x_i=y_i\}\in\alpha$ and $(\mathbf x,\mathbf y)\in\Delta^\alpha_0$.  So, suppose that $\bigcap_{i\in\N}A_i\notin\alpha$ and let $B_j:=A_j\mysetminus \bigcap_{i\in\N}A_i$.  Then $B_j\downarrow \emptyset$ in $\alpha$ and -- being a P-point -- $\alpha$ contains $X$ such that $X\cap(B_j\mysetminus B_{j+1})$ is finite, for every $j\in\N$.  This implies that
 \[\lim_{\substack{j\in X\\j\to\infty}}\rho(x_j,y_j)=0,\]
 i.e. $(\mathbf x,\mathbf y)\in\Delta^\alpha_1$.
 \end{proof}

 \begin{thm}
Let $X$ be a separable topological space, and suppose that $(f_n)$ is an equicontinuous sequence of functions from $X$ into the  metric space $Y$, converging pointwise to  the function $f$ along the ultrafilter $\alpha\in\beta\omega\mysetminus \omega$.
If $\alpha$ is a P-point, then there exists $N\in\alpha$  such that
\begin{equation}\label{E1}
\lim_{\substack{n\to\infty\\ n\in N}} f_n(x)=f(x)\quad\forall x\in X.
\end{equation}
\end{thm}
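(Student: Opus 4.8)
The plan is to package the entire equicontinuous sequence into a single point--sequence in an auxiliary metric space and then apply Proposition~\ref{Ppoint1}. Fix a countable dense set $D=\{x_k:k\in\N\}$ in $X$ and put $M:=Y^{\N}$ with the bounded product metric
\[d(a,b):=\sum_{k\in\N}2^{-k}\min\{\rho(a_k,b_k),1\}\qquad(a,b\in M),\]
which takes values in $[0,1]$ and hence places $M$ inside the framework of Section~\ref{sec4}; note that $d$-convergence is precisely coordinatewise convergence. Let $\mathbf x:=(\restr{f_n}{D})_{n}$ and let $\mathbf y$ be the constant sequence with value $\restr{f}{D}$; both lie in $\mathcal M$ since $d\le 1$. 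For each fixed $k$ and each $\varepsilon>0$ we have $\{n:\rho(f_n(x_k),f(x_k))<\varepsilon\}\in\alpha$ (this is exactly what $f_n(x_k)\to f(x_k)$ along $\alpha$ means), and since finitely many such coordinatewise conditions force $d(\restr{f_n}{D},\restr{f}{D})$ to be small, a finite-intersection argument gives $\{n:d(\restr{f_n}{D},\restr{f}{D})<\varepsilon\}\in\alpha$ for every $\varepsilon>0$; that is, $(\mathbf x,\mathbf y)\in\Delta^\alpha_2$.

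Now I use the hypothesis that $\alpha$ is a P-point: Proposition~\ref{Ppoint1} yields $\Delta^\alpha_2=\Delta^\alpha_1$, so there is $N\in\alpha$ with $\{n:d(\restr{f_n}{D},\restr{f}{D})<\varepsilon\}\in\mathcal F(N)$ for every $\varepsilon>0$, i.e. $d(\restr{f_n}{D},\restr{f}{D})\to 0$ as $n\to\infty$ in $N$. Reading this off the product metric, it says exactly that $\lim_{n\to\infty,\,n\in N}\rho(f_n(x_k),f(x_k))=0$ for every $k\in\N$, so along $N$ the sequence $(f_n)$ converges pointwise to $f$ on the dense set $D$.

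It remains to promote pointwise convergence on $D$ to pointwise convergence on all of $X$; this last step uses only equicontinuity, not the ultrafilter. First, $f$ is continuous (indeed $\{f_n:n\in\N\}\cup\{f\}$ is equicontinuous): given $x\in X$ and $\varepsilon>0$, choose a neighbourhood $U$ of $x$ with $\rho(f_n(y),f_n(x))<\varepsilon/3$ for all $n$ and all $y\in U$; then $\rho(f(y),f(x))=\lim_\alpha\rho(f_n(y),f_n(x))\le\varepsilon/3$ for $y\in U$, using continuity of $\rho$ and of $\alpha$-limits. Pick $x_k\in D\cap U$ by density. For $n\in N$ large enough that $\rho(f_n(x_k),f(x_k))<\varepsilon/3$ (possible by the previous paragraph), the triangle inequality gives
\[\rho(f_n(x),f(x))\le\rho(f_n(x),f_n(x_k))+\rho(f_n(x_k),f(x_k))+\rho(f(x_k),f(x))<\varepsilon.\]
Since $x\in X$ and $\varepsilon>0$ were arbitrary, \eqref{E1} holds with this $N$, which proves the theorem.

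I expect the only delicate point to be the first paragraph: one must choose a metric on $Y^{\N}$ compatible with coordinatewise $\alpha$-convergence so that the \emph{single} relation $(\mathbf x,\mathbf y)\in\Delta^\alpha_2$ genuinely captures $\alpha$-convergence of $f_n(x_k)$ to $f(x_k)$ for all $k$ simultaneously, and then convert the $\mathcal F(N)$-statement produced by $\Delta^\alpha_1$ back into an honest ``ordinary limit along $N$''. Once that is in place, the rest is the standard Ascoli-type density completion and uses nothing about $\alpha$.
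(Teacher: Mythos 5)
Your proof is correct and is essentially the paper's argument written out in full: the paper simply notes that the product (pointwise-convergence) topology on an equicontinuous family over a separable space is metrizable and then invokes Proposition~\ref{Ppoint1}, and your explicit metric $d$ on $Y^{\N}$ together with the final density-plus-equicontinuity upgrade is exactly the content of that metrizability claim. No gaps.
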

\begin{proof}
If $U$ is an equicontinuous set of functions mapping the separable space $X$ into $Y$, then, the restriction to $U$ of the product topology on $Y^X$ is metrizable.  So the assertion follows by Proposition \ref{Ppoint1}.
\end{proof}

When the ultrafilter is selective, one can use deep Mathias's Theorem to prove that the same conclusion can be drawn when one relaxes equicontinuity  to measurability.  The proof of the next theorem follows the same arguments of \cite[Theorem 1.5]{FaPhSt2010}.  We prefer to present it in this more general setup because we believe that it could be interesting in its own right.

\begin{thm}\cite[Theorem 1.5]{FaPhSt2010}\label{t2}
Let $X$ be a Polish space, and suppose that $(f_n)$ is a sequence of Borel functions from $X$ into  a metric space $Y$, converging pointwise to the Borel function $f$ along the ultrafilter $\alpha$.  If $\alpha\in\beta\omega\mysetminus \omega$ is selective, then  there exists $N\in\alpha$ such that
\begin{equation}\label{e2}
\lim_{\substack{n\to\infty\\ n\in N}}f_n(x)=f(x)\qquad\forall x\in X.
\end{equation}
\end{thm}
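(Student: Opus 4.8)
The plan is to adapt the proof of \cite[Theorem 1.5]{FaPhSt2010}; its only non-elementary ingredient is Mathias's Theorem \cite{Ma1977}, that is, condition {\rm(iv)} in the characterisation of selective ultrafilters recalled above.

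First I would reduce to real-valued data. Since $d$ is continuous and $f_n,f$ are Borel, the maps $\delta_n:X\to[0,\infty)$ given by $\delta_n(x):=d(f_n(x),f(x))$ are Borel, the hypothesis reads $\lim_\alpha\delta_n(x)=0$ for every $x\in X$, and the goal is to produce $N\in\alpha$ with $\lim_{n\in N}\delta_n(x)=0$ for every $x\in X$. To this end consider
\[E:=\Bigl\{B\in[\N]^\infty:\ \exists\, x\in X\ \ \exists\, q\in\Q\cap(0,\infty)\ \ \st\ \ \{n\in B:\delta_n(x)\ge q\}\ \text{is infinite}\Bigr\},\]
the set of infinite index sets along which $(\delta_n)$ fails to converge to $0$ pointwise. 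The key point is that $E$ is analytic: for each fixed $q$ one has
\[\bigl\{(B,x)\in[\N]^\infty\times X:\{n\in B:\delta_n(x)\ge q\}\ \text{infinite}\bigr\}=\bigcap_{m}\bigcup_{n\ge m}\bigl(\{B:n\in B\}\times\{x:\delta_n(x)\ge q\}\bigr),\]
which is Borel in $[\N]^\infty\times X$ because $X$ is Polish and the $f_n$, $f$ are Borel; hence $E$ is a countable union of projections of Borel sets, and therefore analytic.

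Now I would apply Mathias's Theorem to the analytic set $E$: as $\alpha$ is selective there is $A\in\alpha$ with either $[A]^\infty\cap E=\emptyset$ or $[A]^\infty\subseteq E$. In the first case we are done with $N:=A$, since $A\notin E$ says exactly that $\{n\in A:\delta_n(x)\ge q\}$ is finite for every $x\in X$ and every $q\in\Q\cap(0,\infty)$, i.e., $\lim_{n\in A}\delta_n(x)=0$ for all $x$.

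The remaining task, ruling out the alternative $[A]^\infty\subseteq E$, is the core of the argument and, I expect, its only genuine difficulty; it is here that the hypothesis $\lim_\alpha\delta_n(x)=0$ enters, together with the fact that every selective ultrafilter is a P-point. Following \cite{FaPhSt2010}: by iterating Mathias's Theorem inside $A$ on the analytic sets $E_k:=\{B:\exists x,\ \{n\in B:\delta_n(x)\ge 1/k\}\ \text{infinite}\}$ and invoking the P-point property (every countable decreasing chain in $\alpha$ admits a pseudo-intersection in $\alpha$), one either already obtains a contradiction or is reduced to a \emph{fixed} threshold, i.e., to some $A'\in\alpha$ with $[A']^\infty\subseteq E_{k_0}$. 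In the latter case one builds recursively a decreasing sequence $A'=B_0\supseteq B_1\supseteq\cdots$ in $\alpha$ together with points $x_k\in X$ witnessing $B_k\in E_{k_0}$; since $\{n:\delta_n(x_k)\ge 1/k_0\}\notin\alpha$ by the hypothesis, one may pass to $B_{k+1}\in\alpha$, $B_{k+1}\subseteq B_k$, on which $(\delta_n(x_j))_n$ is already small for all $j\le k$, so that a pseudo-intersection $B_\infty\in\alpha$ of the $B_k$ has $\lim_{n\in B_\infty}\delta_n(x_k)=0$ for every $k$, while $B_\infty\in[A']^\infty\subseteq E$ still forces a fresh witness for $B_\infty$; the contradiction is then extracted as in \cite[Theorem 1.5]{FaPhSt2010}. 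The delicate point — and the reason selectivity, rather than the mere P-point property, is needed — is precisely this passage through Mathias's Theorem, which is what allows one to cope with the \emph{uncountably many} possible witnesses $x$.
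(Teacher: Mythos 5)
Your overall route is the same as the paper's: encode failure of pointwise convergence along an index set as membership in an analytic subset of $[\N]^\infty$, apply Mathias's Theorem (condition {\rm(iv)} of selectivity), and read the conclusion off the homogeneous side. The reduction to the Borel maps $\delta_n$ and the verification that your $E$ is analytic are both correct. The gap lies in your choice of $E$ and in the handling of the horn $[A]^\infty\subseteq E$. You declare $B\in E$ when $\{n\in B:\delta_n(x)\ge q\}$ is \emph{infinite} for some witness $(x,q)$; this family is upward closed under inclusion among infinite sets, so the horn $[A]^\infty\cap E=\emptyset$ is indeed immediate, but the other horn then carries essentially the entire content of the theorem, and your plan for ruling it out does not close. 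The recursion you sketch produces countably many witnesses $x_0,x_1,\dots$ and, by the P-point property, a pseudo-intersection $B_\infty\in\alpha$ along which $\delta_n(x_k)\to 0$ for every $k$; but $B_\infty\in[A']^\infty\subseteq E_{k_0}$ is then certified by a \emph{fresh} witness $x_\infty$, which is not among the $x_k$ and about which the construction says nothing. No contradiction is reached, only an invitation to continue, and no countable iteration of this kind can exhaust the uncountably many candidate witnesses. The clause ``the contradiction is then extracted as in \cite{FaPhSt2010}'' is therefore not a proof step; the cited argument does not proceed this way.

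The repair is to change the coding, which is exactly what the paper does. Put $\Phi(x,i):=\{n\in\N:\rho(f_n(x),f(x))>1/i\}$ and let $\ee:=\{N\in[\N]^\infty:\exists (x,i)\ \st\ N\subseteq\Phi(x,i)\}$. This family is \emph{hereditary} (stable under passing to infinite subsets) and analytic by essentially your computation, and now both horns are trivial: if $[N]^\infty\subseteq\ee$ with $N\in\alpha$, then $N$ itself lies inside some $\Phi(x,i)$, contradicting $\lim_\alpha\rho(f_n(x),f(x))=0$ since $\{n:\rho(f_n(x),f(x))\le 1/i\}\in\alpha$ must meet $N$; while $[N]^\infty\cap\ee=\emptyset$ forces $N\cap\Phi(x,i)$ to be finite for every $(x,i)$, which is precisely \eqref{e2}. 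Alternatively, you may keep your $E$ and close the bad horn with one further application of Mathias's Theorem to the hereditary family $\{B\in[A']^\infty:\exists x\ \st\ B\subseteq\Phi(x,k_0)\}$: this converts ``$B$ contains infinitely many bad indices'' into ``$B$ contains an infinite set of uniformly bad indices,'' and the contradiction with the hypothesis is then immediate. As written, however, the argument has a genuine hole at its self-declared core.
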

\begin{proof}
Let $\rho$ denote the metric function on $Y$.  Consider the function
\begin{align*}
\Phi:X\times \N&\to \pp(\N)\\
(x,i)&\mapsto \left\{n\in\N:\rho\bigl(f_n(x),f(x)\bigr)\,>\,1/i\right\},
\end{align*}
and define
\[\ee:=\{N\in [\N]^\infty:\exists (x,i)\in X\times\N\ \st\ N\subset \Phi(x,i)\}.\]
If one where to show that $\ee$ is analytic, the result would follow by Mathias's Theorem as follows:   There is $N\in \alpha$
such that $[N]^\infty\subset \ee$ or $[N]^\infty\cap \ee=\emptyset$. Since we cannot have the first possibility because of our assumption
$\lim_{n\to\alpha} f_n(x)=f(x)$ for every $x\in X$, it follows that $[N]^\infty\cap \ee=\emptyset$; i.e. for every $(x,i)\in X\times \N$ the set
$\left\{n\in\N:\rho\bigl(f_n(x),f(x)\bigr)\,>\,1/i\right\}$
is finite.  So (\ref{e2}) would follow.

We show that $\Phi$ is a Borel function -- since $X\times \N$ is a Polish space, and  $[\N]^\infty$ (being a co-countable set in a metric space) is a
Borel subset of $\pp(\N)$,  this would imply that $\ee=[\N]^\infty\cap \Phi(X\times \N)$ is analytic.

For any pair $(F,G)\in [\N]^{<\omega}\times [\N]^{<\omega}$ define
\[\um(F,G):=\left\{N\in\pp(\N): N\cap F=\emptyset\text{ and }N\cap G=G\right\}.\]
It is clear that $\{\um(F,G):(F,G)\in [\N]^{<\omega}\times [\N]^{<\omega}\}$ is a countable base for $\pp(\N)$.  For our purpose, therefore, it suffices to show that
$\Phi^{-1}\bigl(\um(F,G)\bigr)$ is a Borel set.  This follows because
\[\Phi^{-1}\bigl(\um(F,G)\bigr)=\bigcup_i X_i\times\{i\},\]
where
\[X_i:=\bigcap_{n\in F}\bigr\{x\in X:\rho\bigl(f_n(x),f(x)\bigr)\le1/i\bigr\}\ \cap\ \bigcap_{n\in G}\bigl\{x\in X:\rho\bigl(f_n(x),f(x)\bigr)>1/i\bigr\}\]
and the later is a Borel set by the assumption on the Borel-measurability of the functions $f_n$ and $f$.
\end{proof}

\begin{cor}\label{c1}  Let $X$ be a dual Banach space that is separable w.r.t. the w*-topology.  For every $n\in\N$ let $T_n\in B(X)$ be a compact operator and suppose that
$\lim_{n\to\alpha}T_nx=0$ for every $x\in X$, where $\alpha$ is a selective ultrafilter on $\N$. Then, there exists $N\in\alpha$ s.t.
\[\lim_{\substack{n\to\infty\\ n\in N}}\Vert T_nx\Vert=0\qquad\text{for every $x\in X$}.\]
\end{cor}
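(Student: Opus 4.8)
The plan is to derive the statement from Theorem~\ref{t2}, taking for the Polish space the closed unit ball of $X$ equipped with its weak-$*$ topology, and then removing the restriction to the unit ball by homogeneity.

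First I would reduce to the unit ball. It is enough to produce $N\in\alpha$ such that $\lim_{n\to\infty,\,n\in N}\|T_nx\|=0$ for every $x$ in the closed unit ball $B_X$ of $X$, since for an arbitrary $x\in X\setminus\{0\}$ one then applies this to $x/\|x\|\in B_X$ and multiplies through by $\|x\|$.

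Now set $Z:=(B_X,w^*)$. By the Banach--Alaoglu theorem $Z$ is compact, and since $X$ is weak-$*$ separable $Z$ is metrizable; hence $Z$ is a compact metric, in particular Polish, space. Let $Y$ be $X$ viewed as a metric space under its norm, and define $f_n\colon Z\to Y$ by $f_n(x):=T_nx$ and $f:=0$. The hypothesis that $\lim_{n\to\alpha}T_nx=0$ for every $x\in X$ says exactly that $(f_n)$ converges to $f$ pointwise along $\alpha$ on $Z$. Granting for the moment that each $f_n$ is Borel, Theorem~\ref{t2} applies — this is the step that uses the selectivity of $\alpha$ — and produces $N\in\alpha$ with $\lim_{n\to\infty,\,n\in N}f_n(x)=f(x)$ for every $x\in Z$, that is, $\lim_{n\to\infty,\,n\in N}\|T_nx\|=0$ for every $x\in B_X$. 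Combined with the reduction of the preceding paragraph, this is the conclusion.

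The point that requires care — and the main obstacle — is the Borel measurability of $f_n\colon(B_X,w^*)\to(X,\|\cdot\|)$, and this is exactly where the compactness of $T_n$ enters. Since $T_n$ is compact, so is the adjoint $T_n^\ast\colon X^\ast\to X^\ast$, hence the norm-closure of $T_n^\ast(B_{X^\ast})$ is norm-compact, hence norm-separable; fixing a countable norm-dense subset $\{\eta_k^{(n)}:k\in\N\}$ of it, and using that $x\mapsto|\langle x,\zeta\rangle|$ is $1$-Lipschitz in $\zeta$ uniformly over $x\in B_X$, one gets $\|T_nx\|=\sup_k|\langle x,\eta_k^{(n)}\rangle|$ for $x\in B_X$. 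Thus $x\mapsto\|T_nx\|$ is a countable supremum of the functions $x\mapsto|\langle x,\eta_k^{(n)}\rangle|$, and the whole question reduces to showing that each functional in $T_n^\ast(B_{X^\ast})$ is weak-$*$ Borel on the metrizable compactum $Z$; one then passes from $\|T_n\cdot\|$ to $f_n=T_n$ itself by applying the same argument coordinatewise along a countable family of functionals separating the points of the norm-compact set $\overline{T_n(B_X)}$. Establishing this measurability — for instance by exhibiting the functionals in question as pointwise limits on $Z$ of weak-$*$ continuous functionals, via Goldstine's theorem in $X^\ast$ — is the crux of the argument, and it is precisely here that the weak-$*$ separability of $X$ is indispensable, since it is what makes $Z$ a (metrizable, hence standard Borel) space on which Theorem~\ref{t2} can be invoked.
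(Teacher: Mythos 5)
Your overall strategy---pass to the closed unit ball with the weak-$*$ topology, treat it as a Polish space, and feed the restricted maps into Theorem~\ref{t2}---is exactly the paper's, and your reduction to the unit ball by scaling is fine. The problem is the step you yourself flag as the crux: the Borel measurability of $f_n=T_n|_{B_X}\colon(B_X,w^*)\to(X,\Vert\cdot\Vert)$. Your countable-supremum reduction $\Vert T_nx\Vert=\sup_k|\langle x,\eta_k^{(n)}\rangle|$ is correct, but the remaining claim---that every functional in $T_n^\ast(B_{X^\ast})$ is weak-$*$ Borel on $B_X$---is left unproved, and the route you sketch does not close it: Goldstine's theorem gives weak-$*$ density of the predual ball in $B_{X^\ast}$, i.e.\ approximation by \emph{nets}, and a pointwise limit of a net of continuous functions need not be Borel. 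In fact the claim is false at the level of generality you are working in. Take $X=\ell^\infty$ (weak-$*$ separable, with separable predual $\ell^1$, so $(B_X,w^*)$ genuinely is compact metrizable), let $\eta\in(\ell^\infty)^\ast$ be $\lim_{\mathcal U}$ for a non-principal ultrafilter $\mathcal U$, and let $T:=\eta(\cdot)\,e_1$, a rank-one (hence compact) operator. Then $T^\ast(B_{X^\ast})=\{c\eta:|c|\le1\}$ and $\Vert Tx\Vert=|\eta(x)|$; restricted to $\{0,1\}^\N\subset B_{\ell^\infty}$ this is the indicator of $\mathcal U$ as a subset of Cantor space, which is not Borel (not even Lebesgue or Baire measurable). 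So compactness of $T_n$ alone does not make $x\mapsto\Vert T_nx\Vert$ weak-$*$ Borel, and some hypothesis beyond bare compactness must be used before Theorem~\ref{t2} can be invoked.

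For comparison, the paper closes this step in one line by asserting that a compact operator is weak-$*$-to-norm continuous on the ball; that assertion carries all the weight of the paper's proof, and the same rank-one example shows it also needs $T_n^\ast$ to map into the predual (equivalently, weak-$*$-to-weak-$*$ continuity of $T_n$), which does hold for the operators $x\mapsto a_nx-xa_n$ arising in the intended application to $B(H)$ but not for arbitrary compact $T_n$. So you correctly located where the work is, but neither your Goldstine argument nor any appeal to compactness alone will supply the measurability. A secondary point: you deduce metrizability of $(B_X,w^*)$ from weak-$*$ separability of $X$; that implication is false in general (the ball of $(\ell^\infty)^\ast$ is weak-$*$ separable but not weak-$*$ metrizable)---what is actually needed is separability of the predual.
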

\begin{proof}
The closed unit ball $X_1$, of $X$, is a Polish space when equipped with the w*-topology.  Since $T_n$ is compact, it follows that $T_n$ is w*-norm continuous.  Thus, the theorem implies that there exists $N\in \alpha$ s.t. $\lim_{\substack{n\to\infty\\ n\in N}}\Vert T_n x\Vert=0$ for every $x\in X$.
\end{proof}

\section{The commutant of $B(H)$ in its ultrapower.}\label{sec5}

Given an ultrafilter $\alpha\in\beta\omega\mysetminus \omega$, when is the commutant of $B(H)$ in its ultrapower w.r.t. $\alpha$ trivial?

\subsection{Non-trivial commutant -- Flat ultrafilters.}

\begin{thm}\cite[Theorem 4.1]{FaPhSt2010}\label{Farahflat}
  Let $(s_n)$ be a flatness scale for the ultrafilter $\alpha\in\beta\omega\mysetminus \omega$,  satisfying {\rm(i)-(iii)} of Proposition \ref{flatP1}.  Let $\{e_k:k\in\N\}$ be the set of pairwise orthogonal, one-dimensional projections onto the canonical basis of $\ell^2\ (=H)$.  For every $n\in\N$ let
  \[a_n:=\sum_{k=1}^{\infty}s_n(k)\,e_k\]
   and let $\mathbf{a}:=(a_n)$.  Then $\mathbf{a}$ is a non-trivial element of  $\ff_\alpha(B(H))$.
\end{thm}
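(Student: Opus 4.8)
The plan is to verify three things about $\mathbf a=(a_n)$: that it belongs to $\aaa_\infty$, that it is not a scalar, and that it commutes with the diagonal copy of $B(H)$ modulo $\nn_\alpha$. The first is immediate, as each $a_n$ is a diagonal contraction, so $\Vert a_n\Vert=\sup_k s_n(k)\le1$. Non-triviality is almost as quick: for $\lambda\in\C$ the diagonal operator $a_n-\lambda\cdot1$ has norm $\sup_k|s_n(k)-\lambda|\ge\max\bigl(|s_n(1)-\lambda|,\limsup_k|s_n(k)-\lambda|\bigr)=\max(|1-\lambda|,|\lambda|)\ge\tfrac12$, hence $\Vert\mathbf a-\lambda\cdot\mathbf1\Vert_{\ccc_\alpha(B(H))}=\lim_\alpha\Vert a_n-\lambda\cdot1\Vert\ge\tfrac12$ and $\mathbf a\notin\C\cdot\mathbf1$. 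All the content is therefore in proving $\lim_\alpha\Vert a_nb-ba_n\Vert=0$ for every $b\in B(H)$, and we may assume $\Vert b\Vert\le1$.

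Write $D_s:=\sum_k s(k)e_k$ for a decreasing $s\colon\N\to[0,1]$ with $s(1)=1$, so that $a_n=D_{s_n}$. I would prove the uniform estimate: for every such $b$ and every $\varepsilon>0$ there exist $f,g\in\nn^\uparrow$ (depending only on $b$ and $\varepsilon$) and an absolute constant $C$ with
\[
\Vert D_sb-bD_s\Vert\ \le\ C\max\bigl(\omega_f(s),\omega_g(s)\bigr)+C\varepsilon
\]
for all decreasing $s\colon\N\to[0,1]$ with $s(1)=1$. Granting this and applying it to $s=s_n$: since $(s_n)$ is a flatness scale, the sets $\{n:\omega_f(s_n)<\varepsilon\}$ and $\{n:\omega_g(s_n)<\varepsilon\}$ lie in $\alpha$, so on their intersection $\Vert a_nb-ba_n\Vert\le2C\varepsilon$; as $\varepsilon$ is arbitrary, $\lim_\alpha\Vert a_nb-ba_n\Vert=0$, i.e.\ $\mathbf a\in\ff_\alpha(B(H))$.

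To build $f,g$ we first truncate $b$ to a block-tridiagonal operator. Let $P_m$ project onto $\mathrm{span}\{e_1,\dots,e_m\}$. Since $\Vert P_mb\Vert_{\mathrm{HS}}^2\le m\Vert b\Vert^2<\infty$ for each $m$, we have $\Vert P_mb(1-P_N)\Vert\to0$ and $\Vert(1-P_N)bP_m\Vert\to0$ as $N\to\infty$, so we may choose $1=n_1<n_2<\cdots$, growing as rapidly as we wish, such that, writing $Q_k$ for the projection onto $J_k:=[n_k,n_{k+1})$ and $R_k:=Q_1+\dots+Q_k$, one has $\Vert R_{k-1}b(1-R_k)\Vert<\varepsilon2^{-k}$ and $\Vert(1-R_k)bR_{k-1}\Vert<\varepsilon2^{-k}$ for all $k$. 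Writing $b=\sum_{l,m}Q_lbQ_m$, the ``skip-at-least-one-block'' part $\sum_{l\le m-2}Q_lbQ_m=\sum_{m\ge3}R_{m-2}bQ_m$ has norm at most $\bigl(\sum_k\Vert R_{k-1}b(1-R_k)\Vert^2\bigr)^{1/2}<\varepsilon$ (Cauchy--Schwarz, using $\sum_m\Vert Q_m\xi\Vert^2=\Vert\xi\Vert^2$), and the same applied to $b^*$ bounds the lower counterpart; hence $b=b_{\mathrm{band}}+b_{\mathrm{off}}$ with $b_{\mathrm{band}}:=\sum_{|l-m|\le1}Q_lbQ_m$ and $\Vert b_{\mathrm{off}}\Vert<2\varepsilon$.

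It remains to bound $\Vert D_sb_{\mathrm{band}}-b_{\mathrm{band}}D_s\Vert$; put $\delta:=\sup_l\ \sup_{i,j\in J_l\cup J_{l+1}}|s(i)-s(j)|$. As $D_s$ commutes with every $Q_k$, the diagonal-block part of the commutator is $\sum_l(\widetilde\Sigma_l bQ_l-Q_lb\widetilde\Sigma_l)$ with $\widetilde\Sigma_l:=Q_l(D_s-s(n_l)\cdot1)Q_l$, $\Vert\widetilde\Sigma_l\Vert\le\delta$, hence has norm $\le2\delta$; and for the first off-diagonal part, writing $s(i)-s(j)=(s(n_l)-s(n_{l+1}))+(\text{error of modulus}\le2\delta)$ for $i\in J_l$, $j\in J_{l+1}$, one obtains $[D_s,\sum_lQ_lbQ_{l+1}]=\Lambda b_1+(\widetilde\Sigma b_1-b_1\widetilde\Sigma)$ for diagonal operators with $\Vert\Lambda\Vert=\sup_l|s(n_l)-s(n_{l+1})|\le\delta$ and $\Vert\widetilde\Sigma\Vert\le\delta$, where $b_1:=\sum_lQ_lbQ_{l+1}$ satisfies $\Vert b_1\Vert\le\Vert b\Vert$ (orthogonality of the ranges $Q_lH$); the lower off-diagonal part is handled via $b^*$. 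Summing, $\Vert D_sb-bD_s\Vert\le C\delta+4\varepsilon$ for an absolute $C$. Finally, after thinning $(n_k)$ if necessary, choose $f,g\in\nn^\uparrow$ whose iteration-block families $\{[f^{k-1}(1),f^k(1)):k\ge1\}$ and $\{[g^{k-1}(1),g^k(1)):k\ge1\}$ are, respectively, $\{J_{2k-1}\cup J_{2k}\}_{k}$ and $\{J_1\}\cup\{J_{2k}\cup J_{2k+1}\}_{k}$; then every $J_l\cup J_{l+1}$ is a block of $f$ or of $g$, so $\delta\le\max(\omega_f(s),\omega_g(s))$ and the uniform estimate follows. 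The one genuine difficulty is this operator-norm bound: the individual commutator entries $(s_n(i)-s_n(j))b_{ij}$ tend to $0$ along $\alpha$ trivially (by asymptotic constancy of $s_n$ on finite sets), but controlling the operator norm forces one to exploit both the Hilbert--Schmidt decay of the corners of $b$ (to reduce to the tridiagonal case) and the flatness of $(s_n)$ with respect to \emph{all} increasing functions simultaneously --- here the two $f,g$ encoding the block partition --- which is exactly what tames the across-boundary differences $s_n(n_l)-s_n(n_{l+1})$.
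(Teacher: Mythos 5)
Your proof is correct, and at heart it runs on the same engine as the paper's: reduce $b$ to operators that are block-diagonal with respect to two interleaved systems of blocks, and then apply flatness to the two increasing functions $f,g$ encoding those blocks to kill the commutator. The real difference is one of self-containedness. The paper delegates the reduction entirely to Lemma \ref{Farahlem1} (imported from \cite{Fa2011} and \cite{FaPhSt2010}), using it only through the consequence that $\spn\bigcup_{g\in\nn^\uparrow}D(g)$ is norm dense in $B(H)$, after which the commutation is a two-line computation with the central element $\sum_k s_n\bigl(g^{k-1}(1)\bigr)p^g(k)$ of $D(g)$. You instead reprove exactly what is needed inline: your band part $b_{\mathrm{band}}=\sum_{|l-m|\le1}Q_lbQ_m$ is precisely $\sum_kp^{g_0}(k)\,b\,p^{g_0}(k)+\sum_kp^{g_1}(k)\,b\,p^{g_1}(k)-\sum_lQ_lbQ_l$ for the two interleaved block functions you call $f$ and $g$, so your Hilbert--Schmidt corner estimate is in effect a proof of the density statement behind Lemma \ref{Farahlem1}, at the cost of a page of bookkeeping; the paper buys brevity at the cost of a citation. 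Two points you rightly flag deserve the emphasis you give them: the strictly increasing interpolants $f,g$ with the prescribed iteration blocks exist only after arranging the gaps $n_{k+1}-n_k$ to be non-decreasing (harmless, since coarsening the blocks only shrinks the corners $R_{k-1}b(1-R_k)$); and your estimate uses $\omega_f(s_n)$ while Proposition \ref{flatP1}(iii) is phrased via $\Vert s_n-s_n\circ f\Vert_\infty$ --- for decreasing $s$ these agree up to a factor of $2$, and the theorem's hypotheses hand you both forms anyway.
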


The proof makes use of a  `stratification' property of $B(H)$ as described in the  following interesting lemma.  This lemma was originally proved in \cite[Lemma 3.1]{Fa2011} and generalised in \cite[Lemma 4.6]{FaPhSt2010}.  For every $g\in\nn^\uparrow$ and $k\in\N$, let
\[p^g(k):=\sum_{i=g^{k-1}(1)}^{g^k(1)-1}e_i,\]
 and let
 \[D(g):=\sum_{k=1}^\infty\  p^g(k) \,B(H)\, p^g(k).\]

\begin{lem}\label{Farahlem1} For every $a\in B(H)$ and $\delta>0$, there exist $g_0, g_1\in \nn^\uparrow$, $a^0\in D(g_0)$ and $a^1\in D(g_1)$ such that $a-a^0- a^1$ is compact, $\Vert a^i\Vert \le 2\Vert a\Vert$ for $i\in\{0,1\}$, and  $\Vert a-a^0-a^1\Vert<\delta/2$.
\end{lem}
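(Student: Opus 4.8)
The plan is to decompose $a \in B(H)$ by recording how its matrix coefficients decay away from the diagonal, using two interleaved partitions of the basis into consecutive blocks so that every "block-diagonal" piece is captured by $D(g_0)$ or $D(g_1)$ while the "far-from-diagonal" remainder is small and compact. First I would fix $a$ with $\|a\|=1$ (by scaling) and $\delta>0$, and consider the matrix entries $a_{ij}=\langle ae_j,e_i\rangle$ relative to the canonical basis. For each $n\in\N$, the compactness of the truncations and the fact that $ae_j\to$ tail-small forces, for each column, that $\sum_{i>m}|a_{ij}|^2\to 0$ as $m\to\infty$; more to the point, for a fixed finite set of columns, the corresponding rows outside a sufficiently large initial segment contribute arbitrarily little in norm. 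So I would recursively construct a strictly increasing sequence $1=N_0<N_1<N_2<\cdots$ such that the "off-block" part of $a$ — namely the operator whose only nonzero entries $a_{ij}$ are those with $i$ and $j$ lying in non-adjacent blocks $[N_{k-1},N_k)$ — has norm less than $\delta/2$. The key quantitative input is that, having chosen $N_{k-1}$, one can choose $N_k$ large enough that $\|(1-P_{N_k})\,a\,(P_{N_k}-P_{N_{k-1}})\| + \|(P_{N_k}-P_{N_{k-1}})\,a\,(1-P_{N_k})\|$ is smaller than $\delta\,2^{-k-2}$, where $P_N=\sum_{i<N}e_i$; this is possible since $a$ maps the finite-dimensional range of $P_{N_k}-P_{N_{k-1}}$ into a fixed finite-dimensional-ish space up to $\varepsilon$, again by compactness.

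Next I would define $g_0,g_1\in\nn^\uparrow$ so that the blocks $[g_0^{k-1}(1),g_0^k(1))$ enumerate $[N_0,N_1),[N_2,N_3),[N_4,N_5),\ldots$ together with the omitted singletons, and $g_1$ does the same for the shifted grouping $[N_1,N_2),[N_3,N_4),\ldots$; concretely one takes $g_0$ and $g_1$ to be the functions whose iterates $g_i^k(1)$ run through the appropriate subsequence of $(N_k)$. Then set $a^0:=\sum_k p^{g_0}(k)\,a\,p^{g_0}(k)\in D(g_0)$ and $a^1:=\sum_k p^{g_1}(k)\,a\,p^{g_1}(k)\in D(g_1)$, where these sums converge strongly. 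Each $p^{g_i}(k)$ is a projection, so $\|a^i\|\le\|a\|$; in fact one only needs $\|a^i\|\le 2\|a\|$, which is immediate. The operator $a-a^0-a^1$ has, as its nonzero matrix entries, exactly those $a_{ij}$ for which $i,j$ lie in two distinct blocks of the same grid and are not "adjacent enough" to be merged — precisely the off-block entries controlled in the first step — so by construction $\|a-a^0-a^1\|<\delta/2$. Finiteness (hence compactness) of $a-a^0-a^1$ then follows because the telescoping estimate shows it is a norm-limit of its finite-rank truncations: the tail $\sum_{k\ge K}$ of the off-block contribution is bounded by $\sum_{k\ge K}\delta\,2^{-k-2}\to 0$, and each finite partial sum is finite rank.

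The main obstacle I anticipate is the bookkeeping in the recursive choice of the $N_k$: one has to be careful that "off-block" really means non-adjacent blocks (adjacent blocks are absorbed into $a^0$ or $a^1$ by the interleaving), and that the compactness-of-$a$ estimate is applied in the correct order — choosing $N_k$ after $N_{k-1}$ using that $a$ is approximately finite-rank on the already-fixed finite-dimensional corner. A secondary point to get right is the verification that $a^0$ and $a^1$ genuinely lie in $D(g_0)$ and $D(g_1)$, i.e.\ that the block structure of the $p^{g_i}(k)$ matches the grid on which we compressed $a$; this is a matter of defining $g_0,g_1$ so their iteration sequences are exactly the chosen $(N_k)$ (padding with trivial one-step blocks where needed so that $g_0^k(1)$ and $g_1^k(1)$ remain strictly increasing and cover $\N$). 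Once the grids are aligned, the norm bound, the compactness, and the smallness of the remainder all drop out of the telescoping estimate established in the first step.
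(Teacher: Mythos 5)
Your overall strategy --- choose a fine partition $[N_{k-1},N_k)$ of the basis so that the ``far off-block'' part of $a$ is small and compact, then absorb the near-diagonal part into two staggered block-diagonal algebras --- is exactly the idea behind the cited proof (\cite[Lemma 3.1]{Fa2011}, \cite[Lemma 4.6]{FaPhSt2010}), and your recursive choice of the $N_k$ using compactness of $aP_{N_k}$ and $P_{N_k}a$ is fine. But the decomposition you actually write down, $a^0:=\sum_k p^{g_0}(k)\,a\,p^{g_0}(k)$ and $a^1:=\sum_k p^{g_1}(k)\,a\,p^{g_1}(k)$ with \emph{both} pieces taken as plain compressions, does not work. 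Writing $q_k$ for the projection onto $\mathrm{span}\{e_i:N_{k-1}\le i<N_k\}$, every fine-diagonal term $q_kaq_k$ lies inside one block of each grid and is therefore counted \emph{twice} in $a^0+a^1$. The test case $a=\mathds 1$ kills the argument outright: there $a^0=a^1=\mathds 1$, so $a-a^0-a^1=-\mathds 1$, which is neither compact nor of norm $<\delta/2$. Moreover, as you describe the grids (the blocks of $g_0$ being $[N_0,N_1),[N_2,N_3),\dots$ padded by singletons, and similarly for $g_1$), neither grid contains a block covering two \emph{consecutive} fine intervals, so the adjacent-block entries $q_kaq_{k+1}$ and $q_{k+1}aq_k$ are not captured by $a^0+a^1$ at all --- and these are precisely the entries that the off-block estimate cannot control (a block shift sending $q_kH$ into $q_{k+1}H$ satisfies your smallness condition for every choice of $(N_k)$).

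The repair is standard but is the actual content of the lemma. Take the two grids to be the two pair-mergings of the \emph{same} fine partition: $g_0$ with blocks $[N_0,N_2),[N_2,N_4),\dots$ and $g_1$ with blocks $[N_0,N_1),[N_1,N_3),[N_3,N_5),\dots$. Set $a^0:=\sum_k p^{g_0}(k)\,a\,p^{g_0}(k)$ but $a^1:=\sum_k p^{g_1}(k)\,a\,p^{g_1}(k)-\sum_k q_k\,a\,q_k$; the subtracted term is block-diagonal for the finer partition, hence still lies in $D(g_1)$, and this is exactly why the lemma only claims $\Vert a^i\Vert\le 2\Vert a\Vert$ rather than $\le\Vert a\Vert$ (your remark that the bound ``is immediate'' for plain compressions is a symptom of the miscount). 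With this choice $a-a^0-a^1=\sum_{|k-l|\ge 2}q_k\,a\,q_l=\sum_k q_k\,a\,(\mathds 1-P_{N_{k+1}})+\sum_k q_k\,a\,P_{N_{k-2}}$, and your recursive estimates $\Vert q_k\,a\,(\mathds 1-P_{N_{k+1}})\Vert+\Vert(\mathds 1-P_{N_{k+1}})\,a\,q_k\Vert<\delta 2^{-k-2}$ give simultaneously the norm bound $<\delta/2$ and compactness (norm limit of finite-rank partial sums). The rest of your write-up then goes through.
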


 Lemma \ref{Farahlem1} implies that the span of $\bigcup_{g\in\nn^\uparrow} D(g)$ is norm dense  in $B(H)$.  It follows, therefore, that a sufficient (and  necessary) condition for a given $(a_n)$ in $\ccc_\alpha(B(H))$ to be in $\ff_\alpha(B(H))$ is that $\lim_\alpha \Vert a_n b-ba_n\Vert=0$ for every $b\in \bigcup_{g\in\nn^\uparrow} D(g)$.  So, to  proceed to the proof of Theorem \ref{Farahflat} we fix an arbitrary $g\in\nn^\uparrow$.  Observe that $y_n:=\sum_{k=1}^\infty\ s_n(k)  p^g(k)$ is in the centre  of $\mathcal D(g)$.  The flatness property of $\alpha$ implies that $\{n\in\N: \Vert a_n-y_n\Vert\leq \varepsilon\}\in\alpha$ for each $\varepsilon>0$. Moreover, since each $a_n$ is compact and of norm one,  the sequence $(a_n)$ is non-trivial, i.e. $(a_n)$ is a non-trivial element of  $\ff_\alpha(B(H))$.

We shall now proceed to prove the necessity of the flatness property for $\ff_\alpha(B(H))$ to be non-trivial.  This answers \cite[Question 5.2]{FaPhSt2010}.

\begin{rem}\label{rem1}
  Note that if $\ff_\alpha(B(H))$ is non-trivial,  then, in view of \cite[Corollary 2.21]{Ki2006}, and since the set of finite rank operators $F(H)$ is norm dense in $K(H)$,  there is  $\mathbf 0\neq \mathbf a\in \ff_\alpha(B(H))\cap \ccc_\alpha(F(H))$.  Moreover, we can  assume that $\mathbf a$ is of norm one and self-adjoint,  i.e. if $\mathbf a=(a_n)$, we can suppose that each $a_n$ has norm one, is of finite rank and is self-adjoint.  In addition, by making a small perturbation (if necessary) we can further assume that all the $a_n's$ are distinct, so we can identify $\alpha$ with an ultrafilter on $\{a_n: n\in\N\}$.
\end{rem}

\begin{thm}\label{answer}
Let $\alpha\in\beta\omega\mysetminus \omega$.  Then $\ff_\alpha(B(H))$ is non-trivial if and only if $\alpha$ is flat.
\end{thm}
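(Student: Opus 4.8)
The implication ``$\Leftarrow$'' is precisely Theorem \ref{Farahflat}, so the plan is to prove ``$\Rightarrow$'': assuming $\ff_\alpha(B(H))$ non-trivial I must exhibit a flatness scale for $\alpha$. By Remark \ref{rem1} I fix $\mathbf a=(a_n)\in\ff_\alpha(B(H))\cap\ccc_\alpha(F(H))$ with each $a_n$ self-adjoint of norm one and of finite rank, the $a_n$ pairwise distinct, so that $\alpha$ is regarded as an ultrafilter on the countable metric space $(\N,d)$, $d(m,n):=\Vert a_m-a_n\Vert$. I first observe that $\alpha$ cannot be Cauchy with respect to $d$: otherwise $a:=\lim_\alpha a_n$ would exist in norm, lie in $K(H)$, have norm one, and satisfy $ab-ba=\lim_\alpha(a_nb-ba_n)=0$ for every $b\in B(H)$; but the only operator in $K(H)$ commuting with all of $B(H)$ is $0$.

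The flatness scale will be the ``profile'' of the operators $a_n$ relative to the fixed orthonormal basis $\{e_k\}$. With $q_k:=\sum_{i\le k}e_i$, set
\[s_n(k):=\bigl\Vert a_n(1-q_{k-1})\bigr\Vert=\bigl\Vert(1-q_{k-1})a_n\bigr\Vert\qquad(n,k\in\N).\]
Each $s_n$ is non-increasing, $s_n(1)=\Vert a_n\Vert=1$, and $s_n(k)\to0$ as $k\to\infty$ because $a_n$ has finite rank, so $(s_n)$ meets requirement {\rm(i)} in the definition of a flatness scale, with $r=1$. It remains to check requirement {\rm(ii)}: $\lim_{n\to\alpha}\omega_f(s_n)=0$ for every $f\in\nn^\uparrow$.

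Fix $f\in\nn^\uparrow$ and let $B_k:=[f^{k-1}(1),f^{k}(1))$ ($k\in\N$) be the associated partition of $\N$ into consecutive intervals, so $p^f(k)$ is the projection onto $\spn\{e_i:i\in B_k\}$. I introduce two fixed operators built from this partition: the self-adjoint unitary $V_f\in D(f)$ which on each $B_k$ interchanges the first and last basis vectors and fixes the rest; and the partial isometry $W_f=\sum_kW_k\in B(H)$, where $W_k$ maps $\spn\{e_i:i\in B_k\}$ isometrically onto the initial piece of $\spn\{e_i:i\in B_{k+1}\}$ by the obvious index shift. Since $\mathbf a\in\ff_\alpha(B(H))$ one has $\lim_\alpha\Vert[a_n,V_f]\Vert=\lim_\alpha\Vert[a_n,W_f]\Vert=0$, and likewise $\lim_\alpha\Vert[a_n,b]\Vert=0$ for each fixed block projection $b=\sum_{k\in S}p^f(k)$ and each fixed diagonal operator $b$. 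The heart of the matter is a purely Hilbert-space estimate of the shape
\[\omega_f(s_n)\le c\bigl(\Vert[a_n,V_f]\Vert+\Vert[a_n,W_f]\Vert+(\text{finitely many further fixed commutator norms})\bigr),\]
valid for all $n$ with $c$ an absolute constant: if $s_n$ decreases by more than $\varepsilon$ across some $B_{k}$, one splits a near-extremal unit vector for $(1-q_{\min B_k-1})a_n$ into its component on $B_k$ and its tail and argues that either $a_n$ already fails to commute with $V_f$ on $B_k$ by a definite amount --- this is the diagonal mechanism underlying Theorem \ref{Farahflat} --- or the off-block-diagonal mass of $a_n$ across $\min B_k$ is large, which is registered by a commutator with a block projection or with $W_f$. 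Granting the estimate, each commutator norm on the right vanishes along $\alpha$, so $\{n:\omega_f(s_n)\ge\varepsilon\}$ is disjoint from a member of $\alpha$ for every $\varepsilon>0$; hence $\lim_{n\to\alpha}\omega_f(s_n)=0$, and since $f$ was arbitrary $(s_n)$ is a flatness scale, so $\alpha$ is flat.

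The main obstacle is exactly the displayed estimate, and within it the fact that the interval $B_{k(n)}$ across which $s_n$ drops depends on $n$, so no single witnessing operator is available: this is what forces $V_f$ and $W_f$ to be designed so as to register oscillation of the profile \emph{uniformly} over all intervals of the partition at once, and it is why the asymptotic commutation with the (non-separable) diagonal masa and with the block projections has to be brought in to absorb the off-diagonal contribution of $a_n$ --- handling this cleanly is where one must be careful about the ordering of the quantifiers ``for $\alpha$-many $n$'' and ``for every operator $b$''.
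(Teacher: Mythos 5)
Your setup coincides with the paper's: the reduction via Remark \ref{rem1}, the profile $s_n(k)=\Vert a_n(\mathds 1-q_{k-1})\Vert$, and the verification of conditions (i)--(ii) are all exactly what the paper does. The divergence --- and the gap --- is that you then assert that this $(s_n)$ is itself a flatness scale, and you compress the entire content of the theorem into one unproved inequality, $\omega_f(s_n)\le c\bigl(\Vert[a_n,V_f]\Vert+\Vert[a_n,W_f]\Vert+\cdots\bigr)$, which you yourself flag as ``the main obstacle.'' Two concrete reasons this estimate cannot be taken for granted. First, $W_f$ is required to embed $\spn\{e_i:i\in B_k\}$ isometrically into $\spn\{e_i:i\in B_{k+1}\}$ for every $k$ simultaneously, which is impossible whenever $|B_{k+1}|<|B_k|$; nothing about an arbitrary $f\in\nn^\uparrow$ rules this out. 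Second, even when $W_f$ exists, a drop of $s_n$ by $\varepsilon$ across the block $B_{k(n)}$ only yields $\Vert a_n p^f(k(n))\Vert\ge\varepsilon$; to extract a large commutator from a one-block shift you would need $\Vert a_n p^f(k(n)+1)\Vert$ to be smaller than $\Vert a_n p^f(k(n))\Vert$ by a definite amount, and there is no such monotonicity --- the next block's norm can be much larger. So ``the off-block-diagonal mass is registered by $W_f$'' does not follow, and the appeal to commutation with ``each fixed diagonal operator'' does not repair the $n$-dependence of the witnessing block, since centrality only gives one null set in $\alpha$ per fixed operator.

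The paper avoids exactly this trap by \emph{not} claiming that $(s_n)$ is a flatness scale. It argues by dichotomy: either $(s_n)$ is a flatness scale (done), or its failure produces $\varepsilon>0$ and indices $(m_k)$, which one may take with $d_k:=m_{k+1}-m_k$ strictly increasing; setting $p_k:=q_{m_{k+1}}-q_{m_k}$, the \emph{tail-supremum} profile $t_n(j):=\sup_{k\ge j}\Vert a_np_k\Vert$ is then shown to be a flatness scale. The tail supremum is the key device you are missing: a drop $t_n(\ell_i)-t_n(\ell_{i+1})>\delta$ guarantees that the maximizing block $p_{\mu(n,i)}$ with $\ell_i\le\mu(n,i)<\ell_{i+1}$ beats \emph{every} later block by $\delta$, so a single partial isometry $u$ with $u^\ast u=\mathds 1$ and $up_{\ell_i+k}\le p_{\ell_{i+1}+k}$ (available because $d_k$ increases and one may assume $\ell_{i+1}-\ell_i$ is non-decreasing) satisfies $\Vert ua_n-a_nu\Vert>\delta$ for all such $n$ at once, contradicting centrality. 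As it stands, your argument establishes the easy direction and the correct candidate profile, but the hard direction rests on an estimate that is neither proved nor, in the form stated, true.
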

\begin{proof}
In Theorem \ref{Farahflat} it was already shown that if $\alpha$ is flat, then $\ff_\alpha(B(H))$ is not trivial.  Here we show the converse.

Suppose that $\ff_\alpha(B(H))$ is not trivial.  In view of Remark \ref{rem1}, there exists $\mathbf{a}=(a_i)\in\ff_\alpha(B(H))$ such that every $a_i$ has norm one and  has finite rank.   Let $\{\xi_i: i\in\N\}$ be a fixed orthonormal basis of $H$.  For each $k\in\N$ let $q_k$ denote the projection of $H$ onto $\spn\{\xi_i:i< k\}$.  Note that $q_1=0$.
For every $n,m\in\N$ define $s_n(m):=\Vert a_n(\mathds 1-q_m)\Vert$.
 Observe that the sequence $(s_n)$ satisfies {\rm(i)} and {\rm(ii)} of Proposition \ref{flatP1}.  So, either $(s_n)$ is a flatness scale for $\alpha$ and we're done, or  there is $\varepsilon>0$ and a strictly increasing sequence $(m_k)$ in $\N$ such that
  \[A:=\bigcup_{k\in\N}\{n\in \N: s_n(m_k)-s_n(m_{k+1})>\varepsilon\}\]
  belongs to $\alpha$. Without loss of generality we can suppose that $(m_k)$ increases in a way  so that  the sequence $(d_k)$ where $d_k:=m_{k+1}-m_k$ is also strictly increasing.
  Observe that if $s_n(m_k)-s_n(m_{k+1})>\varepsilon$, then
  \[\Vert a_n (q_{m_{k+1}}-q_{m_{k}})\Vert=\Vert a_n(\mathds 1-q_{m_k})-a_n(\mathds 1-q_{m_{k+1}})\Vert\ge s_n(m_k)-s_n(m_{k+1})>\varepsilon.\]
  For every $k\in\N$ define $p_k:=q_{m_{k+1}}-q_{m_{k}}$.  Then $(p_k)$ is a sequence of pairwise orthogonal, finite-rank projections and for every $n\in A$ there exists $k\in\N$ such that $\Vert a_n p_k\Vert>\varepsilon$.  Observe that the dimension of the range space of $p_k$ equals $d_k$.
  For every $n,j\in\N$ define
  \[t_n(j):=\sup_{k\ge j} \Vert a_n p_k\Vert.\]
Note that $t_n(j+1)\le t_n(j)\le t_n(1)> \varepsilon$ for every $j\in \N$ and $n\in A$.
 We show that $(t_n)$ is flatness scale for $\alpha$.  Let $\delta>0$ be given and let $(\ell_i)$ be a strictly increasing sequence in $\N$.  Want to show that
  \[\{n\in\N:t_n(\ell_{i})-t_n(\ell_{i+1})\ge\delta\ \exists i\in\N\}\notin \alpha.\]
  It is harmless to assume that $\ell_{i+2}-\ell_{i+1}\ge\ell_{i+1}-\ell_i$ for every $i\in\N$.  Together with the fact that the sequence $(d_k)$ is strictly increasing, this assumption guarantees the existence of a partial isometry $u$ satisfying $u^\ast u=\mathds 1$ and
  \[u p_{\ell_i + k}\le p_{\ell_{i+1}+k}\,,\]
  for every $0\le k\le \ell_{i+1}-\ell_i-1$ and
  for every $i\in\N$.  For every $i\in\N$ and $n\in\N$ let $\mu(n,i)$ be the number satisfying $\ell_i\le \mu(n,i)<\ell_{i+1}$ and such that $\Vert a_n p_{\mu(n,i)}\Vert=\max\{\Vert a_n p_k\Vert : \ell_i\le k<\ell_{i+1}\}$.  Then
  \[
  t_n(\ell_i)-t_n(\ell_{i+1})=\begin{cases}
                                  0,  \mbox{if } \Vert a_n p_{\mu(n,i)}\Vert\le t_n(\ell_{i+1})\\
                                  \Vert a_n p_{\mu(n,i)}\Vert- t_n(\ell_{i+1}),  \mbox{otherwise},
                                \end{cases}\]
and therefore, if $i\in\N$ and $n\in\N$ are such that $t_n(\ell_i)-t_n(\ell_{i+1})>\delta$, then
\begin{align*}
\delta<t_n(\ell_i)-t_n(\ell_{i+1})&=\Vert a_n p_{\mu(n,i)}\Vert- t_n(\ell_{i+1})\\
                                &=\Vert u a_n p_{\mu(n,i)}\Vert - \sup_{k\ge\ell_{j+1}} \Vert a_n p_k\Vert\\
                                &\le \Vert u a_n p_{\mu(n,i)}\Vert - \Vert a_n u p_{\mu(n,i)}\Vert\\
                                &\le \Vert (u a_n-a_n u)p_{\mu(n,i)}\Vert\\
                                &\le \Vert ua_n-a_nu\Vert.
                                \end{align*}
                                This shows that
                                \[\{n\in\N:t_n(\ell_{i})-t_n(\ell_{i+1})\ge\delta\ \exists i\in\N\}\subset\{n\in\N:\Vert ua_n-a_n u\Vert\ge \delta\},\]
                                and completes the proof
                                \end{proof}

\subsection{Trivial commutant -- $3$f-property}

Although the following theorem follows also by Theorem \ref{answer} and Corollary \ref{f3f}, we here give a direct proof.

\begin{thm}\label{t1}
If $\alpha\in\beta\omega\mysetminus \omega$ has the $3$f-property, then  $\ff_\alpha(B(H))$ is trivial.
\end{thm}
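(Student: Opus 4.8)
The plan is to argue by contradiction, essentially running the argument of Theorem \ref{answer} while inlining the combinatorics of Corollary \ref{f3f} (so that neither is invoked as a black box). Assume $\ff_\alpha(B(H))$ is non-trivial. By Remark \ref{rem1} fix $\mathbf a=(a_i)\in\ff_\alpha(B(H))$ with each $a_i$ of norm one, finite rank and self-adjoint; fix an orthonormal basis $(\xi_i)$ of $H$, let $q_k$ be the projection onto $\spn\{\xi_j:j<k\}$, and put $s_i(m):=\Vert a_i(\mathds 1-q_m)\Vert$. The contradiction will be obtained by producing a single $u\in B(H)$ and a $\delta^{*}>0$ with $\{i:\Vert ua_i-a_iu\Vert>\delta^{*}\}\in\alpha$, which is impossible for $\mathbf a\in\ff_\alpha(B(H))$. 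The engine of the proof is the following consequence of the $3$f-property (in essence Corollary \ref{f3f} refined by Remark \ref{rem2}): \emph{for any sequence $(\sigma_i)$ of decreasing functions in $[0,1]^{\N}$ with $\sigma_i(1)=1$ and $\sigma_i(m)\to 0$, $(\sigma_i)$ is not a flatness scale for $\alpha$; more precisely, applying Proposition \ref{3fP1} to the matrix $(\sigma_i(m))$ and then the device of Remark \ref{rem2}, one obtains a strictly increasing $(c_j)$ in $\N$ and an $\varepsilon>0$ such that $\bigl\{i:\exists\ \text{odd}\ j,\ \sigma_i(c_j)-\sigma_i(c_{j+2})>\varepsilon\bigr\}\in\alpha$.} (When the relevant function in Proposition \ref{3fP1} is $\alpha$-constant its output $(m_k),(n_k)$ is constant, and one still obtains such a $(c_j)$, the drop now occurring across a fixed interval $[1,n_0]$.)

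First I would apply the engine to $(s_i)$, which has the required properties since $a_i$ is compact (so $s_i(m)\to 0$) and $\Vert a_i\Vert=1$ (so $s_i(1)=1$); this yields $(c_j)$, $\varepsilon_0>0$ and $A_0\in\alpha$. Put $P_n:=q_{c_{n+1}}-q_{c_n}$, pairwise orthogonal finite-rank projections. Since $\Vert a_iP_n\Vert\ge s_i(c_n)-s_i(c_{n+1})$, the inequality $s_i(c_j)-s_i(c_{j+2})>\varepsilon_0$ splits and forces $\Vert a_iP_n\Vert>\varepsilon_0/2$ for one of $n\in\{j,j+1\}$. Then I would group the $P_n$ into coarser pairwise orthogonal finite-rank projections $B_1,B_2,\dots$ with strictly increasing ranks — possible because each $P_n$ is finite-rank — so that, using $\Vert a_iB_m\Vert\ge\Vert a_iP_n\Vert$ whenever $P_n\le B_m$, we retain: for each $i\in A_0$ there is $m(i)$ with $\Vert a_iB_{m(i)}\Vert>\varepsilon_0/2$.

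Next set $t_i(l):=\sup_{m\ge l}\Vert a_iB_m\Vert$ for $i\in A_0$. As $a_i$ is compact and the $B_m$ pairwise orthogonal, $\Vert a_iB_m\Vert\to 0$, so $t_i$ is decreasing with $t_i(l)\to 0$ and $t_i(1)>\varepsilon_0/2$. Apply the engine to the normalized sequence $l\mapsto t_i(l)/t_i(1)$ (extended arbitrarily off $A_0$): this gives a strictly increasing $(d_j)$ and $\varepsilon_1>0$ with $\bigl\{i:\exists\ \text{odd}\ j,\ t_i(d_j)-t_i(d_{j+2})>\varepsilon_1 t_i(1)\bigr\}\in\alpha$. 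Using the freedom in the Remark \ref{rem2}-step one may choose the $d_j$ so that in addition the super-block lengths $d_{2s+1}-d_{2s-1}$ are non-decreasing. Putting $\ell_r:=d_{2r-1}$ and $\delta^{*}:=\varepsilon_1\varepsilon_0/2$, and intersecting with $A_0$, we reach data depending only on $\mathbf a$, the basis and $\alpha$: pairwise orthogonal finite-rank projections $B_m$ of strictly increasing rank, a strictly increasing $(\ell_r)$ with $\ell_{r+1}-\ell_r$ non-decreasing, and $A^{*}\in\alpha$ such that for each $i\in A^{*}$ there is $r$ with $t_i(\ell_r)-t_i(\ell_{r+1})>\delta^{*}$.

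This is exactly the configuration reached in the proof of Theorem \ref{answer}: from ``ranks increasing'' and ``$\ell_{r+1}-\ell_r$ non-decreasing'' one constructs a partial isometry $u\in B(H)$ with $u^{*}u=\mathds 1$ and $uB_{\ell_r+k}\le B_{\ell_{r+1}+k}$ for all $0\le k<\ell_{r+1}-\ell_r$ and all $r$; then for $i\in A^{*}$, choosing $\mu$ with $\ell_r\le\mu<\ell_{r+1}$ and $\Vert a_iB_\mu\Vert=t_i(\ell_r)>t_i(\ell_{r+1})$, the computation of Theorem \ref{answer} gives
\begin{align*}
\delta^{*}&<\Vert a_iB_\mu\Vert-t_i(\ell_{r+1})=\Vert ua_iB_\mu\Vert-\sup_{m\ge\ell_{r+1}}\Vert a_iB_m\Vert\\
&\le\Vert ua_iB_\mu\Vert-\Vert a_iuB_\mu\Vert\le\Vert(ua_i-a_iu)B_\mu\Vert\le\Vert ua_i-a_iu\Vert .
\end{align*}
Hence $A^{*}\subseteq\{i:\Vert ua_i-a_iu\Vert>\delta^{*}\}\in\alpha$, so $\lim_\alpha\Vert ua_i-a_iu\Vert\ge\delta^{*}>0$, contradicting $\mathbf a\in\ff_\alpha(B(H))$. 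The step I expect to be the main obstacle is the bookkeeping for the two rounds of grouping and refinement: one must simultaneously keep the projections $B_m$ pairwise orthogonal, their ranks strictly increasing, the lengths $\ell_{r+1}-\ell_r$ non-decreasing, and — the delicate point — not lose the covering of an $\alpha$-set, which is precisely why the ``jump over the bad gap $[m_k,n_k]$'' device of Remark \ref{rem2} (rather than a naive subsequence) must be used at each round; one also has to check that the $\alpha$-constant outcomes of Proposition \ref{3fP1} are harmless, which is absorbed into the engine as indicated above.
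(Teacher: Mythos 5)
Your proof is correct, but it is not the paper's direct argument: it is, in essence, the composite route ``Theorem \ref{answer} plus Corollary \ref{f3f}'' that the paper explicitly mentions and then deliberately bypasses. You run the two-stage machinery of Theorem \ref{answer} — first pass to pairwise orthogonal finite-rank blocks $B_m$ of increasing rank, then regularize with $t_i(l):=\sup_{m\ge l}\Vert a_iB_m\Vert$, apply the $3$f-engine a second time, and build a shift-type isometry subject to the rank and gap-length conditions — with Proposition \ref{3fP1} and Remark \ref{rem2} substituted for ``fails to be a flatness scale'' at both rounds. The paper's proof of Theorem \ref{t1} instead applies Proposition \ref{3fP1} and Remark \ref{rem2} exactly once, to the matrix $a_{ij}=\Vert a_ip_j\Vert$ for a \emph{nested} sequence of projections $p_j\downarrow 0$, $p_1=\mathds 1$: because $p_{j_{k+1}}\le p_{j_k}$, a single isometry $u$ with $up_{j_k}\le p_{j_{k+1}}$ for all $k$ exists with no rank bookkeeping whatsoever, and the estimate $\Vert ua_i-a_iu\Vert\ge\Vert a_ip_{j_k}\Vert-\Vert a_ip_{j_{k+1}}\Vert>\delta$ already holds for every $i$ in the $\alpha$-large union, whichever $k$ witnesses its membership. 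The second round in Theorem \ref{answer} is needed there only because one must certify a full flatness scale, i.e.\ control \emph{every} increasing sequence $(\ell_i)$; for the present statement it is superfluous, which is precisely what the nested-projection trick exposes. Your version is sound (the points you flag — extending $\sigma_i$ off $A_0$, arranging $\ell_{r+1}-\ell_r$ non-decreasing inside the Remark \ref{rem2} step, and attaining the sup defining $t_i(\ell_r)$ at some $\mu\in[\ell_r,\ell_{r+1})$ — all go through), and it has the merit of making the reduction to Theorem \ref{answer} fully explicit; the paper's direct proof buys a one-step argument at the cost of nothing.
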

\begin{proof}
Suppose that $\ff_\alpha(B(H))$ is not trivial and that $\alpha$ has the $3$f-property.  We shall seek a contradiction.  In view of Remark \ref{rem1}, there exists $\mathbf{a}=(a_i)\in\ff_\alpha(B(H))$ such that every $a_i$ has norm one and has finite rank.  Pick an arbitrary strictly decreasing sequence $(p_j)$ of projections on $H$ converging to $0$ w.r.t. the strong operator topology and such that $p_1=\mathds 1$.  For every $(i,j)\in\N^2$ let $a_{i\,j}:=\Vert a_ip_j\Vert$.  Observe that the infinite matrix $(a_{i\,j})$ satisfies the hypothesis of Proposition \ref{3fP1}, and therefore, by the same proposition and by Remark \ref{rem2}, for every $0<\delta<1/4$ there exists $A\in\alpha$ and a strictly increasing sequence $(j_k)$ such that
  \[\bigcup_{k\in\N}\{i\in A:a_{i\,j_k}-a_{i\,j_{k+1}}>\delta\}\in\alpha.\]
Let $u$ be a partial isometry such that $uu^\ast=\mathds 1$ and  $u p_{j_k}\le p_{j_{k+1}}$ for every $k\in\N$.  Then
\begin{align*}
\Vert u\, a_i\,-\,a_i\,u\Vert\,\ge\,\Vert u\,a_i\,p_{j_k}\,-\,a_i\,u\,p_{j_k}\Vert\,&\ge\, \Vert u\,a_i\,p_{j_k}\Vert\,-\,\Vert a_i\,u\,p_{j_k}\Vert\\
&\ge\,\Vert a_i\,p_{j_k}\Vert\,-\,\Vert a_i\,p_{j_{k+1}}\Vert\\
&=\,a_{i\,j_k}\,-\,a_{i_{j_{k+1}}}.
\end{align*}
This implies that $\{i\in\N: \Vert u\,a_i\,-\,a_i\,u\Vert>\delta\}\in\alpha$, contradicting the centrality assumption of $\mathbf{a}$.
\end{proof}

 \begin{rem}
    In \cite[Theorem 2]{FaPhSt2010},  it was shown that when $\alpha$ is selective, $\ff_\alpha(B(H))$ is trivial.  This follows by Theorem \ref{t2}  and by invoking a result by Sherman \cite{She2010} which says that no factor von Neumann algebra (in particular $B(H)$) can admit any non-trivial central sequences\footnote{Recall that a sequence $(a_n)$ in a von Neumann algebra $M$ is a non-trivial  central sequence if $\lim_n\Vert a_na-a a_n\Vert=0$ for every $a\in M$ and $\lim_n\inf_{\lambda\in\C}\Vert a_n-\lambda\mathds 1\Vert\neq 0$ for every $\lambda\in\C$.}. Indeed, if $(a_n)$ is a representing sequence of $\mathbf a\in \ff_\alpha(B(H))$ and each $a_n$ is a compact operator, the sequence of operators $T_n:B(H)\to B(H)$ defined by $x\mapsto a_nx-xa_n$ satisfy the hypothesis of Corollary \ref{c1}, and therefore there exists an increasing sequence $(n_i)$ in $\N$ such that $\{n_i:i\in\N\}\in \alpha$ and $\lim_{i\to\infty}T_{n_i}x=0$ for every $x\in B(H)$.  Note that besides from being a proper generalization, Theorem \ref{t1}, allows for a more elementary proof.
  \end{rem}

\subsection{Trivial commutant -- quasi P-point}

\begin{thm}\label{qp}
  If $\alpha\in\beta\omega\mysetminus \omega$ is a quasi P-point, then $\ff_\alpha(B(H))$ is trivial.
\end{thm}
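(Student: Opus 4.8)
The plan is to argue by contradiction: assume $\alpha$ is a quasi P-point while $\ff_\alpha(B(H))$ is non-trivial. By Remark \ref{rem1} we may fix $\mathbf a=(a_n)\in\ff_\alpha(B(H))$ with each $a_n$ of norm one, self-adjoint and of finite rank, with the $a_n$ pairwise distinct, and from now on regard $\alpha$ as a non-principal ultrafilter on $\{a_n:n\in\N\}$. Fix an orthonormal basis $\{\xi_i\}$ of $H$, let $q_k$ be the orthogonal projection onto $\spn\{\xi_i:i<k\}$, and set $s_n(m):=\Vert a_n(\mathds 1-q_m)\Vert$. Each $s_n$ is non-increasing, $s_n(1)=1$, and $s_n(m)\to 0$ as $m\to\infty$ since $a_n$ is compact. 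If $(s_n)$ happens to be a flatness scale then $\alpha$ is flat and the conclusion follows from Theorem \ref{answer}; since a quasi P-point is precisely what is being shown here to be non-flat, the substance of the argument is to exploit the quasi P-point hypothesis to rule out the remaining case.

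First I would reduce, after discarding a set outside $\alpha$, to the situation in which there is $r>0$ with $X_M:=\{n:s_n(M)\ge r\}\in\alpha$ for every level $M\in\N$; since each $s_n\to 0$ we then have $X_M\downarrow\emptyset$ in $\alpha$. The alternative --- that for some $r>0$ the sets $\{n:s_n(M)\ge r\}$ eventually leave $\alpha$ as $M\to\infty$ --- forces, using $\Vert a_n-q_Ma_nq_M\Vert\le 2s_n(M)$ and a finite-dimensional ultralimit at each level, the existence of $\lim_\alpha a_n$ in norm; that limit would commute with every element of $B(H)$, hence be a scalar, hence (being compact) $0$, contradicting $\Vert a_n\Vert=1$. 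Let $f$ be the indicator function of $(X_M)$. Next, equip $\N\ (=\{a_n\})$ with a metric $\rho$ manufactured from the operators --- a natural candidate being $\rho(n,n'):=\Vert a_n-a_{n'}\Vert$, or a rescaling of it that makes $\rho(n,n')<\varepsilon$ control $|s_n(m)-s_{n'}(m)|$ uniformly in $m$. One checks that $\alpha$ is not Cauchy for $\rho$ (the same scalar argument as above), whereas the case in which $\alpha$ is \emph{discrete} for $\rho$ has to be dispatched separately; assuming $\alpha$ is neither, Proposition \ref{quasi} supplies, for every $\varepsilon>0$, an $\alpha$-faithful $g\in\nn$ with $g\prec_\varepsilon f$.

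I would then apply the Lemma preceding this theorem to $(X_M)$, its indicator $f$, the function $g$, and a rapidly increasing $h\in\nn^\uparrow$ chosen so that the intervals it returns have strictly growing gaps (as in the proof of Theorem \ref{answer}). This yields consecutive intervals $\{I_k:k\in\N\}$ with $\bigcup_kI_k\in\alpha^g$ and $g^{-1}I_{k+1}\subset_\varepsilon X_{h(\max I_k)}$ for every $k$. Unwinding the definitions, every index $i$ with $g(i)\in I_{k+1}$ lies within $\rho$-distance $\varepsilon$ of some $j\in X_{h(\max I_k)}$, whence $s_i\bigl(h(\max I_k)\bigr)>r-\varepsilon$: the operator $a_i$ still carries norm at least $r-\varepsilon$ beyond the projection $q_{h(\max I_k)}$, and this holds for all $i$ in the set $g^{-1}\bigl(\bigcup_kI_k\bigr)\in\alpha$. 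Writing $\ell_k:=h(\max I_k)$ and $p_k:=q_{\ell_{k+1}}-q_{\ell_k}$, the sparseness of the $\ell_k$ allows the construction of a partial isometry $u$ with $u^\ast u=\mathds 1$ and $up_k\le p_{k+1}$ for all $k$; then, exactly as in the proof of Theorem \ref{answer}, $\Vert ua_i-a_iu\Vert\ge\Vert a_ip_k\Vert-\Vert a_ip_{k+1}\Vert$, and combining this with the information just obtained (together with the eventual decay of $s_i$ to pin the relevant drop between two consecutive $\ell_k$'s) one gets $\Vert ua_i-a_iu\Vert$ bounded below by a fixed positive constant for all $i$ in a set of $\alpha$. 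Taking $\varepsilon$ small enough this contradicts $\mathbf a\in\ff_\alpha(B(H))$, and the proof is complete.

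The step I expect to be the main obstacle is twofold. First, the metric $\rho$ must be chosen so that $\alpha$ is provably neither discrete nor Cauchy for it --- otherwise Proposition \ref{quasi} does not apply --- while remaining fine enough to transport information about the tail norms $s_n(\cdot)$ between $\rho$-close indices; the discrete case will have to be either excluded or handled by a separate, more explicit construction of the destabilising partial isometry. Second, the Lemma only delivers a one-sided lower bound $s_i(\ell_k)>r-\varepsilon$ at a single level per index $i$, so the function $h$ (and hence the intervals) must be tuned carefully so that the genuine ``drop'' of each $s_i$ is trapped between consecutive levels $\ell_k,\ell_{k+1}$ that the shift $u$ relates, in order to turn that lower bound into an honest commutator estimate.
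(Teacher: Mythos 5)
Your overall strategy (reduce to the non-Cauchy case, extract a decreasing sequence $X_M\downarrow\emptyset$ in $\alpha$, invoke Proposition \ref{quasi} together with the interval lemma, and destabilise with a shift-type partial isometry) has the right shape, and your Cauchy-case reduction is essentially the paper's first step. But the two points you yourself flag as ``the main obstacle'' are not technicalities to be tuned later; they are exactly where the proof lives, and your framework --- a fixed orthonormal basis and the tail norms $s_n(m)=\Vert a_n(\mathds 1-q_m)\Vert$ --- does not support their resolution. First, the discrete case. Proposition \ref{quasi} gives you nothing when $\alpha$ is discrete for $\rho(n,n')=\Vert a_n-a_{n'}\Vert$, and with your basis-dependent sets $X_M$ there is no reason for their indicator function to be $\alpha$-finite-to-one in that case, so you cannot produce the required $g\prec_\varepsilon f$ at all. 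The paper avoids this by defining the decreasing sets differently: it sets $q(n):=\bigvee_{m\le n}p(m)$ where $p(m)$ is the \emph{range projection of $a(m)$}, and puts $m\in X'(n)$ when both $\Vert q(n)a(m)(q(m)-q(n))\Vert$ and $\Vert(q(m)-q(n))a(m)(q(m)-q(n))\Vert$ are small (the first being small for $\alpha$-most $m$ by centrality). Any two elements of $X'(n)$ are then $\delta/2$-close modulo the finite-dimensional corner $q(n)B(H)q(n)$, so a $\delta$-discrete set in $\alpha$ meets each $X'(n)$ only finitely often, making the indicator of $X(n)$ $\alpha$-finite-to-one; the discrete case is thereby absorbed rather than ``dispatched separately.''

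Second, and more seriously, your endgame does not close. After the interval lemma you know only that $s_i\bigl(h(\max I_k)\bigr)>r-\varepsilon$ for the single index $k$ with $g(i)\in I_{k+1}$, plus the eventual decay $s_i(M)\to0$. That does not locate a drop of definite size between two \emph{consecutive} levels $\ell_j,\ell_{j+1}$ of your chosen subsequence: the decay of $s_i$ from $r-\varepsilon$ to $0$ can be spread over arbitrarily many of the $\ell_j$ in increments smaller than any prescribed $\delta$, so the estimate $\Vert ua_i-a_iu\Vert\ge\Vert a_ip_j\Vert-\Vert a_ip_{j+1}\Vert$ yields nothing. (In Theorem \ref{answer} this issue does not arise because the negation of flatness hands you a definite two-level drop from the outset, and even there a further $\sup$-regularisation is needed.) The paper's use of range projections is precisely what repairs this: since $n^k(p)\le h(\max I_k)$, the operator $a\bigl(n^k(p)\bigr)$ is supported entirely under $q\bigl(h(\max I_k)\bigr)=\sum_{i\le k}e_i$ and hence \emph{vanishes identically} on $\sum_{i>k}e_i$, while $\varepsilon$-proximity of $a\bigl(n^k(p)\bigr)$ to an element of $X\bigl(h(\max I_{k-1})\bigr)$ forces a chunk of norm $>\delta/24$ inside the single block $e_k$. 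The drop is therefore total and localised, and a partial isometry carrying each $e_k$ into $\sum_{i>k}e_i$ gives the contradiction. To complete your argument you would have to replace the fixed basis by the operators' own range projections and your sets $X_M$ by the centrality-defined sets $X(n)$; as written, the proposal has a genuine gap.
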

\begin{proof}
Suppose that $\ff_\alpha(B(H))$ is not trivial and that $\alpha$ is a quasi P-point.  We shall seek a contradiction.  In view of Remark \ref{rem1}, there exists\footnote{To avoid cumbersome notation with subscripts we write $(a(i))$ instead of $(a_i)$.}  $\mathbf{a}=(a(i))\in\ff_\alpha(B(H))$ such that every $a(i)$ has norm one, has finite rank and is self-adjoint.  Moreover, we can  assume that all the $a(n)$'s are distinct, so we can identify $\N$ with $\{a(i):i\in\N\}$ by identifying $i$ with $a(i)$.

We first note that $\alpha$ cannot be Cauchy because if otherwise, it would converge in $K(H)$ (w.r.t. the norm)  and the limit -- being in the centre of $K(H)$ -- must be equal to $0$,  contradicting the assumption that $(a(n))$ is not trivial.
Therefore, Proposition \ref{P3} implies that there is  $\delta>0$ such that every $A\in\alpha$ contains an infinite $\delta$-discrete subset.

Let $p(n)$ be the range projection of $a(n)$ and let $q(n):=\bigvee_{m\le n}p(m)$.   For every $n\in\N$  the set
\[Y(n):=\left\{m\ge n\,:\, \left\Vert q(n)\,a(m)\,\bigl (q(m)-q(n)\bigr)\right\Vert<\delta/12\right\}\]
 belongs to $\alpha$ by the assumption of centrality.  Let \[
X'(n):= \left\{m\in Y(n)\,:\,\left\Vert (q(m)-q(n))\, a(m)\, (q(m)-q(n))\right\Vert<\delta/12\right\}
\]
For any $s,t$ in $X'(n)$ we note that
\begin{equation}
\begin{split}\label{est}
  \Vert a(s)-a(t)\Vert\,&\le\, \bigl\Vert q(n)\, a(s)\, q(n)\,-\,q(n)\, a(t)\, q(n)\bigr\Vert\\
  &+\, 2\bigl\Vert (q(s)-q(n))\, a(s)\, q(n)\bigr\Vert\,+\,2\bigl\Vert (q(t)-q(n))\,a(t)\, q(n)\bigr\Vert\\
  &+\,\bigl\Vert (q(s)-q(n))\,a(n)\,(q(s)-q(n))\bigr\Vert\\
  &+\,\bigl\Vert (q(t)-q(n))\,a(t)\,(q(t)-q(n))\bigr\Vert\\
  &<\,\Vert q(n)\,a(s)\, q(n)\,-\,q(n)\,a(t)\,q(n)\Vert\,+\,\delta/2.
  \end{split}
  \end{equation}

Let $X(n):=\N\mysetminus X'(\N)$.  We shall need to consider two cases:

\noindent\emph{Case {\rm(i)}}  Suppose that there exists $A\in\alpha$ and $\delta'>0$ such that $A$ is $\delta'$-discrete.  By taking a smaller $\delta$ (if necessary) we can suppose that $A$ is $\delta$-discrete.     The  above estimation shows that  if $X'(n)\cap A$ is infinite for some $n\in\N$, the unit ball of $q(n)\,B(H)\,q(n)$ would contradictorily contain an infinite $\delta/2$-discrete subset.  Thus, $A\cap X(n)\in\mathcal F(A)$ for every $n\in\N$.  Observe that $X(n)\cap A\downarrow\emptyset$ in $\alpha$ and the indicator function $f$ associated with the sequence $(X(n)\cap A:n\in\N)$ is $\alpha$-finite-to-one.

\noindent \emph{Case {\rm(ii)}}  Suppose that $\alpha$ is neither discrete nor Cauchy, i.e. by Proposition \ref{quasi} we have
\[\forall f\in\nn\qquad\forall\varepsilon>0\quad\exists\,\alpha\text{-faithful}\ g\in\nn\ \st\  g\prec_\varepsilon f.\]
Note that the estimation of (\ref{est}) implies that if $X'(n)\in\alpha$ for some $n\in\N$ then --  by Proposition \ref{P3} -- the unit ball of $q(n)\,B(H)\,q(n)$ would contradictorily contain an infinite $\delta/2$-discrete subset.  Therefore $X(n)\in\alpha$ for every $n\in\N$.  Let $f$ denote the indicator function associated with  the sequence $(X(n):n\in\N)$.

In both cases, therefore, it is possible to find an $\alpha$-injective $g\in\nn$  satisfying $g\prec_\varepsilon f$, where $\varepsilon<{\delta}/{24}$.  Let $h\in\nn^\uparrow$ satisfy $h(k)>\min g^{-1}\{k\}$ for every $k\in\range g$.  By Lemma \ref{l1}, there exists a consecutive\footnote{i.e. $i<j$ for every $i\in I_n$, $j\in I_m$ and $n<m$.} family of finite sets $\{I_k:k\in\N\}$ of $\N$ such that $\bigcup_k I_k\in \alpha^g$ and $g^{-1} I_{k+1}\subset_\varepsilon X({h(\max I_k)})$.  Without loss of generality, we can assume that $I_k\subset \range g$.   For every $k\in\N$, let $I_k=\bigl\{i^k(1),\,i^k(2),\,\dots,\,i^k(l_k)\bigr\}$ and for $1\le p\le l_k$ let $n^k(p):=\min g^{-1}\{i^k(p)\}$.   Observe that $n^k(p)\le h(\max I_k)$ for every $1\le p\le l_k$.\\

 Let $e_1:=q\bigl(h(\max I_1)\bigr)$ and for $k>1$ let $e_k:=q\bigl(h(\max I_k)\bigr)\,-\,q\bigl(h(\max I_{k-1})\bigr)$.  Then $(e_k)$ is a sequence of pairwise orthogonal projections satisfying:
\begin{itemize}
\item $\left(\sum_{i=1}^{k}e_i\right)\,a\bigl(n^k(p)\bigr)\,=\,q\bigl(h(\max I_k)\bigr)\,a\bigl(n^k(p)\bigr) \,=\,a\bigl(n^k(p)\bigr),$ \\
\item $ a\bigl(n^k(p)\bigr)\,\sum_{i>k}e_{i}\,=\,0$, and\\
\item $\bigl\Vert\,e_k\,a\bigl(n^k(p)\bigr)\, e_k\,\bigr\Vert\,>\,  \frac{\delta}{12}-\varepsilon\,>\,\frac{\delta}{24}$,
\end{itemize}
for every $k> 1$ and $1\le p\le l_k$.  Let $\eta^k(p)$ be a unit vector in $e_kH$ such that
\[\bigl\Vert \, a\bigl( n^k(p)\bigr)\,(\eta^k(p))\bigr\Vert>\delta/24\qquad(1\le p\le l_k).\]

Let $(f_k)$ be a sequence of pairwise orthogonal projections in $B(H)$ such that $f_k\le \sum_{i>k} e_i$ and $e_k\sim f_k$ for every $k\in\N$.  Let $u_k$ be a partial isometry on $H$ satisfying $u_k^\ast u_k=e_k$ and $u_k u_k^\ast=f_k$. Then, $u:=\sum_ku_k$ belongs to $B(H)$ and satisfies
    \[a\bigl(n^k(p)\bigr) u\,(\eta^k(p))\,=\,a\bigl(n^k(p)\bigr)u_k\,(\eta^k(p))\,=\,0,\]
    and
    \[\bigl\Vert u\,a\bigl(n^k(p)\bigr)\,(\eta^k(p))\,\bigr\Vert\,=\,\left\Vert\sum_{i=1}^k(u_ie_i)\,a\bigl(n^k(p)\bigr)\,(\eta^k(p))\right\Vert\,=\,\bigl\Vert a\bigl(n^k(p)\bigr)\,(\eta^k(p))\,\bigr\Vert\,>\,\delta/24,\]
    for every $k\in\N$ and $1\le p\le l_k$.  This shows that $\bigl\Vert a\bigl(n^k(p)\bigr)\, u\,-\,u\,a\bigl(n^k(p)\Vert>\delta/24$ for every $k\in\N$ and $1\le p\le l_k$. \\

So, we have
    \[ \bigcup_k\bigl\{i^k(1),\,i^k(2),\,\dots,\,i^k(l_k)\bigr\}\,\subset\, \bigl\{g(n)\,:\,n\in\N,\ \Vert a(n)\,u\,-\,u\,a(n)\Vert>\delta/24\bigr\},\]
i.e. $\{g(n):n\in \N,\ \Vert a(n)\,u\,-\,u\,a(n)\Vert>\delta/24\}$ belongs to $\alpha^g$.  Since $g$ is $\alpha$-injective, this implies that $\{n\in\N\,:\,\Vert a(n)\,u\,-\,u\,a(n)\Vert\le\delta/24\}\in\alpha$.  But, since $\mathbf a\in\ff_\alpha(\aaa)\cap \ccc_\alpha(F(H))$, we also have that
$\{n\in\N\,:\,\Vert a(n)\,u\,-\,u\,a(n)\Vert\le\delta/24\}\in \alpha$.
 The contradiction completes the proof.
\end{proof}

Question {\rm(i)} below becomes more relevant in the light of Theorem \ref{answer}.  We believe  that Question {\rm(ii)} is also natural to ask.

\begin{prob}
  \begin{enumerate}[{\rm(i)}]
    \item Do non-flat ultrafilters exist in ZFC?
    \item What is the precise relation between the notion of quasi P-points and flat ultrafilters?
  \end{enumerate}
\end{prob}

\end{document}